\documentclass{article}

\usepackage{graphicx}
\usepackage{amssymb,amsmath,amsthm}
\usepackage{multirow}
\usepackage{rotating}
\usepackage[english]{babel}


\newtheorem{condition}{Condition}
\newtheorem{theorem}{Theorem}
\newtheorem{lemma}{Lemma}

\usepackage{multirow}

\def\bh{\boldsymbol{h}}
\def\bZ{\boldsymbol{Z}}
\def\bz{\boldsymbol{z}}

\def\btheta{\boldsymbol{\theta}}

\def\bR{\boldsymbol{R}}
\def\blambda{\boldsymbol{\lambda}}

\def\ss{\boldsymbol{s}}

\def\trace{\mathrm{tr}}
\let\oldi\i
\renewcommand{\i}{\text{\oldi}}

\makeatletter

\makeatother

\def\R{\mathbb{R}}

\def\d{\textrm{d}}

\def\bh{\mathbf{h}}



\usepackage{color}
\definecolor{DarkGreen}{rgb}{0, 0.6, 0}
\input{standardmacros.sty}

\begin{document}

\title{Asymptotically Equivalent Prediction in Multivariate Geostatistics}

\author{
{\bf Fran\c{c}ois Bachoc}  \\
Institut de math\'ematique, UMR5219; Universit\'e de Toulouse; \\
CNRS, UPS IMT, F-31062 Toulouse Cedex 9, France, \\
francois.bachoc@math.univ-toulouse.fr \\
{\bf Emilio Porcu} \\
School of Computer Science and Statistics,
Trinity College, Dublin, \\
porcue@tcd.ie \\
{\bf Moreno Bevilacqua} \\
Department of Statistics, University of Valparaiso, \\
Valparaiso, Chile, \\
 moreno.bevilacqua@uv.cl \\
{\bf Reinhard Furrer} \\
Department of Mathematics and \\ Department of Computational Science, \\ University of Zurich, 8057, Zurich, Switzerland, \\
reinhard.furrer@math.uzh.ch \\
{\bf Tarik Faouzi} \\
Department of Statistics, Universidad del Biobio, \\ 4081112, Conception, Chile, \\
tfaouzi@ubiobio.cl
}

\maketitle

\begin{abstract} \hspace{0.1cm}  
Cokriging is the common method of spatial interpolation (best linear unbiased prediction) in multivariate geostatistics. While best linear prediction has been well understood in univariate spatial statistics, the literature for the multivariate case has been elusive so far. The new challenges provided by modern spatial datasets, being typically multivariate, call for a deeper study of cokriging. In particular, we deal with the problem of misspecified cokriging prediction within the framework of fixed domain asymptotics.
 Specifically, we provide conditions  for  equivalence of measures associated with multivariate Gaussian random fields, with index set in a compact set of a $d$-dimensional Euclidean space. Such conditions have been elusive for over   about $50$ years of spatial statistics. \\
We then focus on the  multivariate Mat{\'e}rn and Generalized  Wendland  classes of matrix valued covariance functions, that have been  very popular for having parameters that are crucial to  spatial interpolation, and that control the mean square differentiability of the associated Gaussian process. We provide sufficient conditions, for equivalence of Gaussian measures, relying on the covariance parameters of these two classes. This enables to identify the parameters that are crucial to asymptotically equivalent interpolation in multivariate geostatistics. Our findings are then illustrated through simulation studies. \\
{\bf Keywords:} Cokriging, Equivalence of Gaussian Measures, Fixed Domain Asymptotics, Functional Analysis, Generalized Wendland, Mat{\'e}rn, Spectral Analysis
 \end{abstract}

\section{Introduction}

\subsection{Context}
Our paper deals with equivalence of Gaussian measures and asymptotically equivalent cokriging prediction  in multivariate geostatistics. We consider a multivariate ($p$-variate) stationary Gaussian field $\bZ=\{\bZ(\s) = ( Z_1(\s) , \ldots, Z_p(\s ))^\top, \ss \in D \},$ where $D $ is a fixed bounded subset of $\mathbb{R}^d$ with non-empty interior. Throughout, the integers $d$ and $p$ are fixed. The assumption of Gaussianity implies  that modeling, inference and prediction depend exclusively on the mean of $\bZ$, which is constant and assumed to be zero, and on the multivariate covariance function, being 
 a $p \times p$ matrix function  $\bR = [R_{ij}]_{i,j=1}^p$, defined  in $\R^d$, such  that
\[R_{ij}(\h) = \Cov( Z_i(\t) , Z_j(\s) ), \quad \h=\t - \s, \]
for $\t,\s \in D$ and $i,j=1,\ldots,p$. Throughout, the diagonal elements $R_{ii}$ are called marginal covariances, whereas the off-diagonal members $R_{ij}$ are called cross-covariances. The mapping $\bR$ must be positive definite, which means that 
\begin{equation} \label{posdef}
   \sum_{\ell=1}^{n} \sum_{k=1}^n   \boldsymbol{a}_{\ell}^\top \bR(\s_\ell - \s_k) \boldsymbol{a}_{k} \geq 0,
 \end{equation}
for all positive integer $n$, $\{\s_1,\hdots,\s_n\}\subset D$ and $\{\boldsymbol{a}_1,\hdots,\boldsymbol{a}_n\}\subset \mathbb{R}^p$. 

Spatial prediction in multivariate geostatistics is known as cokriging, which is the analogue  of best linear unbiased prediction in classical regression. Given $\boldsymbol{Z}$ as above and given observation locations $\{\s_1,\hdots,\s_n\}\subset D$, let $\bZ_{n}=(\bZ^{\top}_{1;n},\ldots,\bZ^{\top}_{p;n})^{\top}$ be the observation vector obtained from  $\bZ$,
where $\bZ_{i;n}=(Z_i(\ss_1),\ldots,Z_i(\ss_n) )^{\top}$, $i=1,\ldots,p$. Then the cokriging predictor of  the $j$-th component of  $\bZ$   at a target point $\s_0$, denoted $\widehat{Z}_{j;n}^{\text{CK}}(\s_0)$, is given by
\begin{equation*}
\widehat{Z}_{j;n}^{\text{CK}}(\s_0) = \mathbb{E} \left ( Z_j(\s_0) | \boldsymbol{Z}_n \right ), \quad  j=1,\ldots,p.
\end{equation*} 
The simple kriging predictor of $Z_j$ at a target point $\s_0$, denoted $\widehat{Z}_{j;n}^{\text{SK}}(\s_0)$, is instead given by $\widehat{Z}_{j;n}^{\text{SK}}(\s_0) = \mathbb{E} \left ( Z_j(\s_0) | \boldsymbol{Z}_{j;n} \right ).$
In this present paper, we put emphasis on the following problems: \\
{\bf A.} How important is the multivariate covariance function for spatial prediction? \\
{\bf B.} Which covariance parameters are important to cokriging? \\
We provide answers to these two questions and, in doing so, we obtain general sufficient conditions for equivalence of multivariate Gaussian measures, that are of independent interest. Notice that \cite{zhang2015doesn} have previously also addressed these two questions and provided more partial answers.

\subsection{Literature Review}

\subsubsection*{Multivariate Covariance Functions}
Multivariate covariance functions in $d$-dimensional Euclidean spaces have become ubiquitous and we refer the reader to \cite{Genton:Kleiber:2014} for a detailed account. Recently, there has been some work on multivariate covariance functions on non planar surfaces, and the reader is referred to \cite{APFM}, \cite{alegria2016dimple}, \cite{PBG16} and \cite{bevilacqua2019estimation}. 

As for constructive methods to provide new models, the linear model of coregionalization \cite{Wackernagel:2003} is based on representing any component of the multivariate field~$\boldsymbol{Z}$ as a linear combination of latent, uncorrelated fields.  Such a technique has been constructively criticized by \cite{Gneiting:Kleibler:Schlather:2010} and \cite{daley2015} as the  smoothness of any component of the multivariate  field amounts to that of the roughest underlying univariate process.  Moreover,  the number of parameters can quickly become massive as the number of components increases.  Scale mixture techniques as in \cite{Porcu20111293}, as well as latent dimension approaches \cite{Porcu:Gregori:Mateu:2006, Apanasovich:Genton:2010, Porcu20111293}  have been largely used to propose new multivariate models. \cite{bevilacqua2019estimation} call the following construction principle {\em multivariate parametric adaptation}: let  $\{ R(\cdot; \boldsymbol{\lambda}): [0, \infty) \to \mathbb{R}, \; \boldsymbol{\lambda} \in \mathbb{R}^k \}$ be a parametric family of continuous functions, such that $R(\|\cdot\|;\boldsymbol{\lambda})$ is a correlation function in $\R^d$ ($R(0; \boldsymbol{\lambda})=1$), indexed by a parameter vector $ \boldsymbol{\lambda}=(\lambda_1,\ldots,\lambda_k)^{\top}$. Call $ \boldsymbol{\lambda}_{ij}=(\lambda_{ij,1},\ldots,\lambda_{ij,k})^{\top}$, $i,j=1, \ldots, p$ a collection of parameter vectors in $\mathbb{R}^k$. Then, define $\boldsymbol{R}: [0,\infty) \to \mathbb{R}^{p \times p}$ through
\[
\boldsymbol{R}(x) = \left [ R_{ij}(x) \right ]_{i,j=1}^p , \qquad x \in [0,\infty), 
\]
with elements $R_{ij}$ defined as  
\begin{equation} \label{ed-sheeran}
 R_{ij}(x)= \sigma_{ii} \sigma_{jj} \rho_{ij} R(x; \boldsymbol{\lambda}_{ij} ), \qquad x \in [0,\infty), 
\end{equation} 
where $\sigma_{ii}^2$ is the variance of the $i$th component of the multivariate random field and where $\rho_{ii}=1$ and $\rho_{ij}$, $i\neq j$, is the colocated correlation coefficient. Thus, the problem is finding the restriction on the parameters $\boldsymbol{\lambda}_{ij}$ such that $\boldsymbol{R}(\|\cdot\|)$ is positive definite as in (\ref{posdef}).

A crucial benefit of this strategy, by comparison with  the linear model of coregionalization,  is a clear  physical interpretation of the parameters
\cite{bevilacqua2015}.
For example, for a bivariate random field, the  colocated correlation parameter, $\rho_{12}$, expresses the marginal correlation between the components, since
$R_{12}(0)=R_{21}(0)=\rho_{12}$ if $\sigma^2_{11}=\sigma^2_{22}=1$. In Euclidean spaces this strategy has been adopted by \cite{Gneiting:Kleibler:Schlather:2010}, \cite{doi:10.1080/01621459.2011.643197}  and by \cite{daley2015}.  

\subsubsection*{Misspecified Kriging Predictions under Infill Asymptotics}

The study of asymptotic properties of (co)kriging predictors
is complicated by the fact that more than one asymptotic framework
can be considered when observing a single realization from a (multivariate) Gaussian
field. Under infill asymptotics (also called fixed domain asymptotics), the typical assumption is that 
 the sampling domain is bounded and that the sampling set becomes
increasingly dense. Under increasing domain asymptotics, the sampling domain
increases with the number of observed data, and the distance between
any two observation locations is bounded away from zero \cite{bachoc2014asymptotic,Mardia:Marshall:1984}.

The focus of this paper is on infill asymptotics.
In this case, in the univariate case, a key concept is the equivalence of Gaussian measures \cite{Sko:ya:1973,Ibragimov-Rozanov:1978}. 
Furthermore, a long-standing object of attention is asymptotically optimal prediction when using a misspecified covariance function (the predictor is then called pseudo BLUP by Michael Stein \cite{Stein:1999}). In the univariate case, Michael Stein has shown that, when the Gaussian measures obtained from the true and misspecified covariance function are equivalent, then the predictions under the misspecified
covariance function are asymptotically efficient, and mean square errors are asymptotically equivalent to their targets \cite{Stein:1988,Stein:1990,Stein:1993,Stein:1999b,Stein:2004}.

When working with specific covariance models, it is thus crucial to know which conditions on the parameters imply the equivalence of Gaussian measures. Specific results have been provided for the Mat{\'e}rn \cite{Zhang:2004} and Generalized Wendland \cite{bevilacqua2019estimation} classes of covariance functions, associated with scalar valued random fields. These results themselves follow from earlier works on general conditions for equivalence of univariate Gaussian measures, in particular based on spectral densities \cite{Sko:ya:1973}.
Nevertheless, multivariate extensions of these various results are lacking. They are provided in the present paper.

\subsection{Outline}
While best linear prediction has been well understood in univariate spatial statistics, as we have discussed above, the literature for the multivariate case has been elusive so far. Nevertheless, the new challenges provided by modern spatial datasets, being typically multivariate, call for a deeper study of cokriging. This is the object of this paper, where we deal with the problem of misspecified cokriging prediction within the framework of infill asymptotics.  

It turns out that the contributions related to equivalence of measures for Gaussian random fields are limited, since the early $70$ies, to scalar valued random fields, see the above discussion. Hence, to study cokriging prediction under fixed domain asymptotics, it is imperative to understand equivalence of measures for multivariate random fields defined over compact sets in $\R^d$. This paper provides a solution to the problem, by providing general sufficient conditions for equivalence. 

We also focus on the  multivariate Mat{\'e}rn and Generalized  Wendland  classes of matrix valued covariance functions, that have been  very popular in spatial statistics for having parameters that are crucial to  spatial interpolation, and that control the mean square differentiability of the associated Gaussian process. We show parametric conditions ensuring these matrix valued covariance models to be compatible, that is, to yield equivalent Gaussian measures. Hence, we provide sufficient conditions for asymptotic equivalence of misspecified cokriging predictions. We confirm and illustrate this asymptotic equivalence numerically.

The outline of the paper is the following: Section~\ref{sec2} contains the necessary mathematical and probabilistic background.
Section \ref{SECTION:GENERAL:CONDITIONS:EQUIVALENCE} contains general results about compatible matrix valued covariance functions.
Section \ref{SECTION:APPLICATION:MATERN:WENDLAND} relates on the compatibility between the Mat\'ern and Generalized Wendland parametric classes of bivariate covariance functions. Section~\ref{5} inspects the problem of cokriging predictions through these models. 
Our findings are then illustrated through a simulation study in Section~\ref{sec:numill}.
The proofs are lengthy and technical, so that we deferred those to the Appendix to favor a neater exposition.

\section{Background and Notation} \label{sec2} 

\subsection{Multivariate Covariance Functions  and Function Spaces}

Let  $d,p$ be positive integers. Let $\bR: \R^d \to  \R^{p \times p}$ be positive definite. We let the elements $R_{ij}$ of $\bR$ be continuous in $\R^d$. 
 The matrix spectral density of $\bR$ is the 
$p\times p$ matrix function $\bF = [F_{ij}]_{i,j=1}^p$ defined by 
\[
R_{ij}(\h) = \int_{\IR^d} F_{ij}(\blambda) e^{ \i \h^\top \blambda } d \blambda,
\]
for $\h \in \mathbb{R}^d$ and $i,j=1,\ldots,p$. Here $\i$ is the complex number satisfying $\i^2 = -1$. Note  that a sufficient condition for $\bF$ to be well defined is that $\bR$ has elements $R_{ij}$ that are pointwise absolutely integrable in $\R^d$, and that the same holds for the Fourier transforms of these elements.

For $a=0,1$, we consider a stationary matrix covariance function $\bR^{(a)} = [R^{(a)}_{ij}]_{i,j=1}^{p}$ on $\mathbb{R}^d$. We assume that, for $a = 0,1$ and $i,j=1,\ldots,p$, the function $R^{(a)}_{ij}$ is summable on $\mathbb{R}^d$ and that $\bR^{(a)}$ has matrix spectral density $\bF^{(a)} = [F^{(a)}_{ij}]_{i,j=1}^{p}$.

We further assume that for $a=0,1$ and $j=1,\ldots,p$, $F^{(a)}_{jj}$ is real-valued, strictly positive on $\IR^d$ and summable on $\IR^d$. We remark that for $a=0,1$ and $i,j=1,\ldots,p$, $i \neq j$, $F^{(a)}_{ij}$ is complex-valued and we also assume that $|F^{(a)}_{ij}|$ is summable on $\IR^d$, with $|z|$ the modulus of $z \in \IC$. 
Cram\'er's theorem shows that, for any $\blambda \in \IR^d$ and $a=0,1$, the matrix $\bF^{(a)}(\blambda)$ is Hermitian with non-negative eigenvalues.

For a $p \times p$ Hermitian matrix $\bfM$, we let $\lambda_1(\bfM) \leq  \dots \leq \lambda_p(\bfM)$ be its $p$ eigenvalues. 
If $\bfM$ is non-negative definite, we let $\bfM^{1/2}$ be its unique Hermitian non-negative definite square root.
For a square complex matrix $\N$, we let $\|\N\|$ be its largest singular value.
For a complex column vector $\bv$, we let $\bar{\bv}$ be composed of the conjugates of $\bv$ and $\|\bv\|^2 = \bar{\bv}^\top \bv$.
For two $p \times p$ Hermitian matrices $\bfM$ and $\N$ we write $\bfM \geq \N$ when for all $\bv \in \IC^p$, $\bar{\bv}^\top \bfM \bv \geq \bar{\bv}^\top \N \bv$.
We let $\bfe_1,\ldots,\bfe_q$ be the $q$ basis column vectors of $\IR^q$ for $q \in \IN$.

For a summable function $f : \IR^d \to \IR$, we let the Fourier transform $\cF(f)$ of $f$ be defined by, for $\blambda \in \mathbb{R}^d$, 
\[
\cF(f)(\blambda) = \frac{1}{(2 \pi)^d} \int_{\IR^d}  f( \t) e^{ - \i \blambda^\top \t} d \t.
\]

For a bounded subset $S$ of $\mathbb{R}^d$ with non-empty interior, we let $\cW_{S}$ be the set of functions from $\IR^d$ to $\IC^p$ of the form $(f_1,\ldots,f_p)^\top$, where for $i=1,\ldots,p$, $f_i=\cF(g_i)$ for a function $g_i$ in $L^2(\IR^d)$ that is zero outside of $S$. As observed in \cite{Sko:ya:1973}, a function $(f_1,\ldots,f_p)^\top$ in $\cW_{S}$ satisfies $\int_{\IR^d} (|f_1(\blambda)|^2 + \dots + |f_p(\blambda)|^2) d \blambda < \infty$.
Consider a matrix function $\blambda \mapsto \bF(\blambda)$ with $\blambda \in \IR^d$ and with $\bF(\blambda)$ a $p \times p$ Hermitian strictly positive definite matrix and assume that $\|\bF\|$ and $\lambda_p(\bF) / \lambda_1(\bF)$ are bounded on $\IR^d$.
Then we define $\cW_{S}( \bF )$ as the closure of $\cW_{S}$ in the metric
\[
\| \bof \|^2_{\cW_{S}(\bF)} = \int_{\IR^d} \bar{\bof}(\blambda)^\top \bF(\blambda) \bof (\blambda) d \blambda.
\]
We remark that $\cW_{S}( \bF )$ is a (complex) separable Hilbert space, with inner-product given by 
\[
( \bof_1 , \bof_2 )_{\cW_{S}( \bF )} = \int_{\IR^d} \bar{\bof}_1(\blambda)^\top \bF(\blambda) \bof_2 (\blambda) d \blambda.
\]
Indeed, for $\bof = ( f_1,\ldots,f_p )^\top \in \cW_{S}( \bF )$, for $i=1,\ldots,p$, $f_i$ is included in the space of square integrable functions w.r.t. the measure $\|\F(\blambda)\| d \blambda$ which is separable and complete.

For $S \subset \IR^d$, we let $L^{2,p}_{S}$ be the Hilbert space of the vectors of functions of the form $(f_1,\ldots,f_p)^\top$, with $f_i : S \to \IC$ square summable for $i=1,\ldots,p$, endowed with the inner product
\[
( (f_1,\ldots,f_p)^\top , (g_1,\ldots,g_p)^\top ))_{L^{2,p}_{S}}
=
\int_S \bar{f}_1(\t) g_1(\t) d \t
+
\dots
+
\int_S \bar{f}_p(\t) g_p(\t) d \t.
\]

\subsection{The Univariate Mat{\'e}rn and Generalized Wendland Covariance Functions}

We start by describing the two univariate classes of covariance functions that will be used throughout as building blocks for matrix valued covariance functions. \\
{\bf 1.} The Mat{\'e}rn function \cite{Stein:1999} is defined as:
\begin{equation}
{\cal M}_{\nu,\alpha}(r)=
\frac{2^{1-\nu}}{\Gamma(\nu)} \left (\frac{r}{\alpha}
  \right )^{\nu} {\cal K}_{\nu} \left (\frac{r}{\alpha} \right ),
  \qquad r \ge 0, \label{matern-cov}
\end{equation}
where $r = \|\boldsymbol{h}\|$, $\boldsymbol{h} \in \mathbb{R}^d$. The Mat{\'e}rn covariance function is positive definite in $\R^d$ for all positive $\alpha$ and for any value of $d$.  Here,
${\cal K}_{\nu}$ is a modified Bessel function of the second kind of
order $\nu$.
The parameter $\nu>0$ characterizes the
differentiability at the origin and, as a consequence,  the
differentiability of the associated sample paths. In particular for a
positive integer $k$, the sample paths are $k$ times differentiable, in any direction, if
and only if $\nu>k$. When $\nu=1/2+m$ and $m$ is a nonnegative integer, the Mat{\'e}rn
function simplifies to the product of a negative exponential with a
polynomial of degree $m$, and for $\nu$ tending to infinity, a rescaled
version of the Mat{\'e}rn converges to a squared exponential model being
infinitely differentiable at the origin. Thus, the Mat{\'e}rn function
allows for a continuous parameterization of its associated Gaussian
field in terms of smoothness.  \\
{\bf 2.} The Generalized Wendland function \cite{Gneiting:2002,zastavnyi2006some} is defined, for $\kappa, \beta>0$, as
\begin{equation} \label{WG2*}
{\cal W}_{\mu,\kappa,\beta}(r):= \begin{cases}  \frac{1}{B(2\kappa,\mu+1)} \int_{r/{\beta}}^{1} u(u^2-(r/\beta)^2)^{\kappa-1} (1-u)^{\mu}\,\d u  ,& 0 \leq r/\beta < 1,\\ 0,& r/\beta \geq 1, \end{cases}
\end{equation}
with $B$ denoting the beta function, and where $r = \|\h\|$, $\h \in \IR^d$. The function  ${\cal W}_{\mu,\kappa,\beta}(r)$ is positive definite in $\mathbb{R}^d$ 
if and only if
\begin{equation} \label{lawea}
\mu \ge (d+1)/2+ \kappa.
\end{equation}

Note that  ${\cal W}_{\mu,0,\beta}$ is not defined because $\kappa$ must be strictly positive. In this special case 
we consider
the Askey function \cite{Askey:1973} 
\begin{equation*} 
{\cal A}_{\mu,\beta}(r) :=   \begin{cases}   \left ( 1- r/\beta \right )^{\mu} ,& 0 \leq r/\beta< 1,\\ 0,&r/\beta \geq 1. \end{cases}
\end{equation*}
Arguments in \cite{Golubov:1981} show that ${\cal A}_{\mu,\beta}$ is positive definite if and only if $\mu \ge (d+1)/2$
and   we define ${\cal W}_{\mu,0,\beta}:= {\cal A}_{\mu,\beta}$. 

Closed
form solution of the integral in \eqref{WG2*} can be obtained when
$\kappa=k$, a positive integer.  In this  case,
 $ {\cal W}_{\mu,k,\beta}(r) =
{\cal A}_{\mu+k,\beta}(r) P_{k}(r)$,
with $P_{k}$ a polynomial of order $k$.
These functions, termed (original) Wendland functions, were originally proposed by   \cite{Wendland:1995}.

Other closed form solutions of integral~\eqref{WG2*} can be obtained when $\kappa=k+0.5$, using some results in \cite{Schaback:2011}. Such solutions are called \emph{missing Wendland}  functions.

As noted by  \cite{Gneiting:2002b}, Generalized Wendland  and Mat{\'e}rn functions exhibit the same behavior at the origin, with the smoothness parameters of the two covariance models related by the equation $\nu=\kappa+1/2$.

Here, for a positive integer $k$, the sample paths  of a Gaussian field with Generalized Wendland  function are $k$ times differentiable, in any direction, if and only if  $\kappa >k-1/2$.

\subsection{Bivariate Mat{\'e}rn and Generalized Wendland Models} 

We now consider the multivariate parametric adaptation, illustrated through Equation~\eqref{ed-sheeran}, as a construction principle for multivariate covariance functions. For simplicity, we focus on the case $p=2$ (bivariate Gaussian random fields) but the results following subsequently can be namely extended to $p>2$. 

We thus follow \cite{Gneiting:Kleibler:Schlather:2010}  and \cite{daley2015} to couple construction (\ref{ed-sheeran}) with, respectively, the Mat{\'e}rn model~\eqref{matern-cov} and the Generalized Wendland model (\ref{WG2*}), to obtain: \\
{\bf 1. The Bivariate Mat{\'e}rn model}, denoted ${\cal BM}_{\btheta}$, and defined as 
\begin{equation}\label{eq:piip5}
{\cal BM}_{\btheta}(r)= \left[ \rho_{ij}\sigma_{ii}\sigma_{jj} {\cal M}_{\nu,\alpha_{ij}}(r) \right]_{i,j=1}^2 , \quad  \rho_{11}=\rho_{22} = 1, \qquad \alpha_{12} =  \alpha_{21},
\end{equation}
where $\btheta=(\sigma_{11},\sigma_{22},\rho_{12},\nu,\alpha_{11},\alpha_{22},\alpha_{12})^{\top}$; \\
{\bf 2. The Bivariate Generalized Wendland model}, denoted ${\cal BW}_{\blambda}$, and defined as   
\begin{equation}\label{eq:piip6}
{\cal BW}_{\blambda}(r)= \left[ \rho_{ij}\sigma_{ii}\sigma_{jj} {\cal W}_{\mu,\kappa,\beta_{ij}}(r) \right]_{i,j=1}^2  , \quad  \rho_{11}=\rho_{22} = 1, \qquad \beta_{12} =  \beta_{21},
\end{equation}
where
$\blambda=(\sigma_{11},\sigma_{22},\rho_{12},\mu,\kappa,\beta_{11},\beta_{22},\beta_{12})^{\top}$.

Note that, in principle, the smoothness parameters  $\nu$ and  $\kappa$ for both models can change through the components. Nevertheless, in this paper we assume common smoothness parameters.

Henceforth, for the bivariate Mat{\'e}rn, we assume the following condition on the colocated correlation parameter:
\begin{equation}
\rho_{12}^2
<
\frac{  
  \alpha_{12}^{4 \nu}
}{
\alpha_{11}^{2 \nu}  \alpha_{22}^{2 \nu}
}
\inf_{t \geq 0}
\frac{
(  \alpha_{12}^{-2} +t^2 )^{2 \nu +d}
}{
(  \alpha_{11}^{-2} +t^2 )^{\nu +d/2}
(  \alpha_{22}^{-2} +t^2 )^{\nu +d/2}
}.
\end{equation}
This condition guarantees that the bivariate Mat{\'e}rn is valid, that is positive definite  \cite{Gneiting:Kleibler:Schlather:2010}.
Similarly, for the bivariate Generalized Wendland model we assume that

\begin{equation} \label{eq:cond:inf:wendland:longer}
\rho_{12}^2
<\frac{\beta_{11}^{d}\beta_{22}^{d}}{\beta_{12}^{2d}}
\inf_{z \geq 0}
\frac{
\mathstrut_1 F_2\Big(\zeta;\zeta+\frac{\mu}{2},\zeta+\frac{\mu}{2}+\frac{1}{2};-\frac{(z\beta_{11})^{2}}4\Big) \mathstrut_1 F_2\Big(\zeta;\zeta+\frac{\mu}{2},\zeta+\frac{\mu}{2}+\frac{1}{2};-\frac{(z\beta_{22})^{2}}4\Big)
}{
\left(\mathstrut_1 F_2\Big(\zeta;\zeta+\frac{\mu}{2},\zeta+\frac{\mu}{2}+\frac{1}{2};-\frac{(z\beta_{12})^{2}}4\Big)
\right)^2},
\end{equation}
where $\zeta=(d+1)/2+\kappa$ and
\begin{equation} \label{eq:oneFtwo}
\mathstrut_1 F_2(a;b,c;z)=\sum_{k=0}^{\infty}\frac{(a)_{k}z^{k}}{(b)_{k}(c)_{k}k!}, \qquad z \in\mathbb{R},
\end{equation}
is a special case of the generalized hypergeometric functions $\mathstrut_q F_p$ \cite{Abra:Steg:70}, with $(q)_{k}=  \Gamma(q+k)/\Gamma(q)$ for $k \in  \mathbb{N}\cup\{ 0\}$, being the Pochhammer symbol.
These technical conditions will be carefully explained in the Appendix. 

\subsection{Equivalence of Gaussian Measures and Cokriging}
Equivalence and orthogonality of probability measures are useful tools when assessing the asymptotic properties of both prediction and estimation for Gaussian fields. 
We denote with $P^{(a)}$, $a=0,1$, two probability measures defined on the same measurable space $\{\Omega, \cal F\}$. The measures $P^{(0)}$ and $P^{(1)}$ are called equivalent (denoted $P^{(0)} \equiv P^{(1)}$) if, for any $A\in \cal F$, $P^{(1)}(A)=1$  implies $P^{(0)}(A)=1$ and vice versa. On the other hand,  $P^{(0)}$ and $P^{(1)}$ are orthogonal if there exists an event $A$ such that $P^{(1)}(A)=1$ but $P^{(0)}(A)=0$. For a $p$-variate Gaussian random field $ \bZ : \Omega \times D \to \mathbb{R}^p$, to define previous concepts, we restrict the event $A$ to the $\sigma$-algebra generated by $\bZ$ and we emphasize this restriction by saying that the
two measures are equivalent on the paths of $\bZ$.  It is well known that two Gaussian measures (that is two measures on $\Omega$ such that $\bZ$ is Gaussian) are either equivalent or orthogonal on the paths of $\bZ$ \cite{Ibragimov-Rozanov:1978}.

Since a Gaussian measure is completely characterized by the mean function and matrix covariance function, we write $P(\bR)$ for a Gaussian measure  on  $(\Omega, \cal  F)$ such that $\bZ$ has zero mean and matrix covariance function $\bR$.
We also write  $P(\bR^{(0)}) \equiv P(\bR^{(1)}) $ on the paths of $\bZ$, if   two Gaussian measures with mean zero and the matrix covariance functions $\bR^{(0)}$ and $\bR^{(1)}$ are equivalent on the paths of $\bZ$. \\
For the remainder of the paper, we call $\bR^{(0)}$ and $\bR^{(1)}$ {\em compatible} when $P(\bR^{(0)}) \equiv P(\bR^{(1)}) $ on the paths of $\boldsymbol{Z}$. \\
A direct implication of the celebrated result by \cite{blackwell1962merging} is that, if two matrix valued covariance functions are compatible, then the two cokriging predictors are asymptotically equivalent (under fixed domain asymptotics). 

\section{General Results} \label{SECTION:GENERAL:CONDITIONS:EQUIVALENCE}

Let us consider two matrix covariance  functions $\bR^{(0)}$ and $\bR^{(1)}$ with associated matrix  spectral densities $\bF^{(0)}$ and $\bF^{(1)}$
and let $P(\bR^{(0)})$ and $P(\bR^{(1)})$ be the associated Gaussian measures. The next condition is our general technical requirement on $\bR^{(0)}$ and $\bR^{(1)}$. As shown in Appendix~\ref{app:cond},
this condition holds for the Mat\'ern and Generalized Wendland covariance functions.

\begin{condition} \label{cond:phi:zero}
There exist two constants $0 < c_1 < c_2 < \infty$ and a function $\gamma: \IR^d \to \IR$ such that $(\gamma,\ldots,\gamma)^\top$ is in $\cW_{[-b,b]^d}$ for some fixed $0 < b < \infty$ and
\[
c_1 \gamma^2 \I_p \leq \bF^{(0)} \leq c_2 \gamma^2 \I_p,
\]
\[
c_1 \gamma^2 \I_p \leq \bF^{(1)} \leq c_2 \gamma^2 \I_p.
\]
\end{condition}

We now provide a fundamental result for this paper. It relates about a sufficient condition for the compatibility of $\bR^{(0)}$ and $\bR^{(1)}$. It is an extension of Theorem~1 in \cite{Sko:ya:1973} from the univariate to the multivariate case.

\begin{theorem} \label{theorem:sufficient:representation:b}
Assume that Condition \ref{cond:phi:zero} holds, and 
 that there exists a matrix-valued function $\bB$ on $(\IR^d)^2$, such that for $\blambda , \bmu \in \IR^d $, $\bB(\blambda,\bmu) = [ b(\blambda,\bmu)_{ij} ]_{i,j=1}^p$ is a $p \times p$ complex matrix. 
Let $\bB(\blambda,\bmu) = \bar{\bB}(\bmu,\blambda)^\top$ for all $\blambda , \bmu \in \IR^d$.
Assume also that for $i,j=1,\ldots,p$, we have
\begin{equation} \label{eq:the:finite:integral}
\int_{\IR^d}
\int_{\IR^d}
|b_{ij}(\blambda , \bmu)|^2\,
\| \bF^{(0)}(\blambda) \|
\,
\| \bF^{(0)}(\bmu) \|
d \blambda 
d  \bmu 
< + \infty.
\end{equation}
Assume then that we have, for $\t,\s \in D$ and $\h=\t-\s$,
\begin{equation} \label{eq:assumption:Rdtwo:minus:Rone}
\bR^{(1)}(\h)
-
\bR^{(0)}(\h)
=
\int_{\IR^d}
\int_{\IR^d}
e^{ - \i \blambda^\top \t + \i \bmu^\top \s }
\bF^{(0)}(\blambda)
\bB(\blambda , \bmu) 
 \bF^{(0)}(\bmu)
d \blambda 
d \bmu.
\end{equation}
Then,  $P(\bR^{(0)})\equiv P(\bR^{(1)})$ on the paths of $\bZ$. 
\end{theorem}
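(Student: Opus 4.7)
The plan is to extend the spectral criterion of Theorem~1 in \cite{Sko:ya:1973} from the scalar to the matrix-valued setting. The guiding idea is that, under $P(\bR^{(0)})$, the closed linear span in $L^2(P(\bR^{(0)}))$ of $\{\boldsymbol{a}^\top\bZ(\s):\boldsymbol{a}\in\IR^p,\ \s\in D\}$ is isometrically isomorphic to the spectral Hilbert space $\cW_{D}(\bF^{(0)})$ via the identification $\boldsymbol{a}^\top\bZ(\s)\leftrightarrow\bof_{\s,\boldsymbol{a}}(\blambda):=e^{\i\blambda^\top\s}\boldsymbol{a}$. Condition~\ref{cond:phi:zero} further ensures that $\cW_{[-b,b]^d}(\bF^{(0)})$ and $\cW_{[-b,b]^d}(\bF^{(1)})$ coincide as sets with equivalent norms, so that the Gaussian Hilbert spaces under the two measures can be identified. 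Equivalence of $P(\bR^{(0)})$ and $P(\bR^{(1)})$ on the paths of $\bZ$ will then follow from the Feldman-H\'ajek criterion, provided one proves that the self-adjoint operator on $\cW_{[-b,b]^d}(\bF^{(0)})$ representing the bilinear form $\bR^{(1)}-\bR^{(0)}$ is Hilbert-Schmidt.

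Using (\ref{eq:assumption:Rdtwo:minus:Rone}) together with the Hermitian symmetry $\bB(\blambda,\bmu)=\bar{\bB}(\bmu,\blambda)^\top$, the form $\boldsymbol{a}^\top[\bR^{(1)}-\bR^{(0)}](\t-\s)\boldsymbol{b}$ evaluated on plane waves $\bof_{\s,\boldsymbol{a}}$ and $\bof_{\t,\boldsymbol{b}}$ becomes
\[
\int_{\IR^d}\!\!\int_{\IR^d} \bar{\bof}_{\s,\boldsymbol{a}}(\blambda)^\top \bF^{(0)}(\blambda)\,\bB(\blambda,\bmu)\,\bF^{(0)}(\bmu)\,\bof_{\t,\boldsymbol{b}}(\bmu)\,d\blambda\,d\bmu.
\]
After transporting $\cW_{[-b,b]^d}(\bF^{(0)})$ isometrically to $L^{2,p}_{\IR^d}$ via the change of variables $\bg=(\bF^{(0)})^{1/2}\bof$ (well-defined because $\bF^{(0)}$ is strictly positive definite by Condition~\ref{cond:phi:zero}), the operator of interest becomes an integral operator with kernel $(\bF^{(0)}(\blambda))^{1/2}\,\bB(\blambda,\bmu)\,(\bF^{(0)}(\bmu))^{1/2}$. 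Its squared Hilbert-Schmidt norm is then bounded by
\[
\int_{\IR^d}\!\!\int_{\IR^d} \|\bF^{(0)}(\blambda)\|\,\sum_{i,j=1}^p|b_{ij}(\blambda,\bmu)|^2\,\|\bF^{(0)}(\bmu)\|\,d\blambda\,d\bmu,
\]
which is finite by (\ref{eq:the:finite:integral}), closing the argument.

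The step I expect to be most delicate is rigorously constructing the isometry between the Gaussian Hilbert space of $\bZ$ and $\cW_{D}(\bF^{(0)})$ in the multivariate setting, because the spectral representation $\bZ(\s)=\int e^{\i\blambda^\top\s}\,d\bM(\blambda)$ involves a matrix-valued random spectral measure whose increments have correlated components with covariance structure $\bF^{(0)}(\blambda)\,d\blambda$; one must verify that the extension from plane waves to the full Hilbert space preserves the covariance identification used to apply (\ref{eq:assumption:Rdtwo:minus:Rone}). A secondary technical point is showing that the operator of interest is well-defined and bounded on all of $\cW_{[-b,b]^d}(\bF^{(0)})$ and is genuinely self-adjoint, for which the strict positive definiteness and the lower bound $c_1\gamma^2\I_p\leq\bF^{(0)}$ from Condition~\ref{cond:phi:zero} are essential.
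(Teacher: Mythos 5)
Your core argument coincides with the paper's: both proofs hinge on showing that the operator representing the sesquilinear form $(\bof_1,\bof_2)_{\cW_{D}(\bF^{(1)})}-(\bof_1,\bof_2)_{\cW_{D}(\bF^{(0)})}$ on $\cW_{D}(\bF^{(0)})$ is the integral operator with kernel $\bB$ and is Hilbert--Schmidt because the conjugated kernel $(\bF^{(0)})^{1/2}\bB\,(\bF^{(0)})^{1/2}$ is square-integrable, which is exactly \eqref{eq:the:finite:integral}. Where you diverge is at the final reduction: you invoke the Feldman--H\'ajek/Ibragimov--Rozanov criterion directly on the spectral Hilbert space identified with the Gaussian Hilbert space of the field, whereas the paper transports the Hilbert--Schmidt bound to the covariance operators $\cB^{(0)},\cB^{(1)}$ on $L^{2,p}_{D}$ (via the eigenfunctions $\bh_k$ of $\cB^{(0)}$ and the pseudo-inverse $(\cB^{(0)})^{-1/2}$) and then applies Theorem 2.25 of Da Prato--Zabczyk; your route, if you cite the spectral-space form of the criterion, legitimately bypasses that translation, and your Hilbert--Schmidt bound via the $L^2$ norm of the conjugated kernel is cleaner than the paper's explicit computation with the truncated kernels $\bB_n$. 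Two points you flag but do not execute are genuinely load-bearing and are precisely where the paper spends its effort: (i) the extension of the identity \eqref{eq:assumption:Rdtwo:minus:Rone} from plane waves to all of $\cW_{D}(\bF^{(0)})$, which the paper handles by approximating the eigenfunctions $\bg_k$ by Fourier transforms $\bu_{k,n}$ of $L^2(D)$ functions and passing to the limit using Condition \ref{cond:phi:zero} and \eqref{eq:the:finite:integral} for continuity of both forms; and (ii) the strict positivity $I+D\geq (c_1/c_2)I$ of the relative covariance operator, needed for the criterion, which follows from Condition \ref{cond:phi:zero} but should be stated. Finally, note a small slip: the relevant space throughout is $\cW_{D}(\bF^{(0)})$ for the sampling domain $D$, not $\cW_{[-b,b]^d}(\bF^{(0)})$; the cube $[-b,b]^d$ only enters through the auxiliary function $\gamma$ in Condition \ref{cond:phi:zero}.
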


Notice that the function in the integral \eqref{eq:assumption:Rdtwo:minus:Rone} is summable because of Cauchy-Schwarz inequality in concert with \eqref{eq:the:finite:integral} and Condition \ref{cond:phi:zero}.

We now provide a second fundamental result  for this paper. The proof of Theorem~\ref{theorem:sufficient:integral:normalized:difference:small} relies on Theorem~\ref{theorem:sufficient:representation:b}.
Theorem~\ref{theorem:sufficient:integral:normalized:difference:small} is particularly well applicable to specific models of covariance functions, as it just requires to show that two matrix spectral densities are sufficiently close for large frequencies. This enables us to address the Mat\'ern and Generalized Wendland models in Section~\ref{SECTION:APPLICATION:MATERN:WENDLAND}. 
Theorem~\ref{theorem:sufficient:integral:normalized:difference:small} is an extension of Theorem~4 in \cite{Sko:ya:1973} from the univariate to the multivariate case.

\begin{theorem} \label{theorem:sufficient:integral:normalized:difference:small}
Assume that Condition \ref{cond:phi:zero} holds, and that
\[
\int_{ \IR^d }
\frac{1}{\gamma(\blambda)^4}
\bnorml
\bF^{(0)}(\blambda ) - \bF^{(1)}(\blambda )
\bnormr^2
d \blambda
< \infty.
\]
Then,  $P(\bR^{(0)})\equiv P(\bR^{(1)})$ on the paths of $\bZ$.
\end{theorem}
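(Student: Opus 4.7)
The plan is to reduce Theorem~\ref{theorem:sufficient:integral:normalized:difference:small} to Theorem~\ref{theorem:sufficient:representation:b} by constructing a matrix-valued function $\bB(\blambda,\bmu)$ that meets the hypotheses of the latter; once such a $\bB$ satisfies both \eqref{eq:assumption:Rdtwo:minus:Rone} and \eqref{eq:the:finite:integral}, Theorem~\ref{theorem:sufficient:representation:b} immediately yields $P(\bR^{(0)}) \equiv P(\bR^{(1)})$ on the paths of $\bZ$.

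First I would choose a scalar function $\chi : \IR^d \to \IR$ that is real, even, and of Schwartz class, normalized so that $\int_{\IR^d} \chi(\bu)\, e^{-i\bu^\top \s}\, d\bu = 1$ for every $\s \in D$: concretely, take $\chi$ proportional to $\cF^{-1}(\psi)$ for a smooth, compactly supported, real and even $\psi$ with $\psi \equiv 1$ on $D$. Then define the symmetrized kernel
\[
\bC(\blambda,\bmu) \;:=\; \tfrac{1}{2}\bigl[\, (\bF^{(1)} - \bF^{(0)})(-\blambda)\, \chi(\blambda - \bmu) + \chi(\blambda - \bmu)\, (\bF^{(1)} - \bF^{(0)})(-\bmu)\,\bigr],
\]
and set $\bB(\blambda, \bmu) := \bF^{(0)}(\blambda)^{-1}\, \bC(\blambda, \bmu)\, \bF^{(0)}(\bmu)^{-1}$. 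Both factors $\bF^{(0)}(\blambda)^{-1}$ and $\bF^{(0)}(\bmu)^{-1}$ are well-defined because $\bF^{(0)} \geq c_1\, \gamma^2\, \I_p$ by Condition~\ref{cond:phi:zero}. A direct Fubini computation --- splitting $\bC$ into its two summands and exchanging the order of integration, over $\bmu$ for the first summand and over $\blambda$ for the second, using the normalization of $\chi$ --- shows that the double integral in \eqref{eq:assumption:Rdtwo:minus:Rone} equals $\bR^{(1)}(\t - \s) - \bR^{(0)}(\t - \s)$ for every $\t, \s \in D$, verifying the representation hypothesis. The Hermitian symmetry $\bB(\blambda, \bmu) = \overline{\bB(\bmu, \blambda)}^\top$ follows from the Hermiticity of $\bF^{(1)} - \bF^{(0)}$ and of $\bF^{(0)}$, the reality and evenness of $\chi$, and the symmetrization built into the definition of $\bC$.

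The main obstacle is verifying the integrability condition \eqref{eq:the:finite:integral}. Using Condition~\ref{cond:phi:zero}, one bounds $|b_{ij}(\blambda,\bmu)|^2\, \|\bF^{(0)}(\blambda)\|\, \|\bF^{(0)}(\bmu)\|$ by a constant multiple of $\|\bC(\blambda, \bmu)\|^2 / (\gamma(\blambda)^2\, \gamma(\bmu)^2)$, so it suffices to prove
\[
\int_{\IR^d}\!\int_{\IR^d} \frac{\|\bC(\blambda, \bmu)\|^2}{\gamma(\blambda)^2\, \gamma(\bmu)^2}\, d\blambda\, d\bmu \;<\; \infty.
\]
Applying the triangle inequality to $\bC$ and the substitution $\bu = \blambda - \bmu$ reduces the task to bounding $\int |\chi(\bu)|^2 \int \| (\bF^{(1)} - \bF^{(0)})(-\blambda) \|^2 / \bigl(\gamma(\blambda)^2\, \gamma(\blambda - \bu)^2\bigr)\, d\blambda\, d\bu$. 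At $\bu = 0$ the inner integral is precisely the hypothesis of Theorem~\ref{theorem:sufficient:integral:normalized:difference:small}, up to the constant $|\chi(0)|^2$, and the heart of the argument is to transfer this control uniformly to $\bu \neq 0$. This is achieved by exploiting the structure of $\gamma$ granted by Condition~\ref{cond:phi:zero} (namely $\gamma = \cF(g)$ with $g \in L^2([-b,b]^d)$), which tightly constrains the variation of the ratio $\gamma(\blambda) / \gamma(\blambda - \bu)$, together with the rapid decay of the Schwartz function $\chi$. Combining a Cauchy-Schwarz step with Fubini's theorem and the translation-invariance of Lebesgue measure then yields a bound by a constant multiple of $\int \| \bF^{(1)} - \bF^{(0)} \|^2 / \gamma^4\, d\bnu$, which is finite by assumption. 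Once both hypotheses of Theorem~\ref{theorem:sufficient:representation:b} are verified, the conclusion follows.
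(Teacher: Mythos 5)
Your overall strategy --- reducing to Theorem \ref{theorem:sufficient:representation:b} by exhibiting an explicit kernel $\bB$ --- is legitimate in principle, and your verification of the representation \eqref{eq:assumption:Rdtwo:minus:Rone} and of the Hermitian symmetry of $\bB$ is correct. The gap is in the verification of \eqref{eq:the:finite:integral}. After your (correct) reductions, everything hinges on the uniform bound
\[
\sup_{\blambda \in \IR^d} \int_{\IR^d} \chi(\blambda-\bmu)^2 \, \frac{\gamma(\blambda)^2}{\gamma(\bmu)^2} \, d\bmu < \infty,
\]
which you assert follows because Condition \ref{cond:phi:zero} ``tightly constrains the variation of the ratio $\gamma(\blambda)/\gamma(\blambda-\bu)$''. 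It does not. The fact that $\gamma$ is the Fourier transform of an $L^2$ function supported in $[-b,b]^d$ yields a global Bernstein-type bound on the derivative and Plancherel--P\'olya control of sampled values, but it gives no lower bound on $\gamma$ where $\gamma$ is small; hence the two-point ratio $\gamma(\blambda)^2/\gamma(\bmu)^2$ for $\|\blambda-\bmu\|$ bounded is not uniformly controlled, and nothing in Condition \ref{cond:phi:zero} prevents $\bF^{(0)}$ (hence $\gamma^2$) from having a sequence of increasingly deep valleys, which would make the supremum above infinite. Moreover $\chi$ cannot be taken compactly supported (its Fourier transform must be constant on the open set $D$, so $\chi$ is at best Schwartz), and Schwartz decay of $\chi$ in $\bu$ need not dominate the growth of $\gamma(\blambda)^2/\gamma(\blambda-\bu)^2$ when $\gamma$ decays faster than polynomially --- which Condition \ref{cond:phi:zero} permits. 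In the Mat\'ern and Generalized Wendland applications $\gamma^2$ is comparable to $(1+\|\blambda\|)^{-2\nu-d}$ and your bound would follow from Peetre's inequality, but Theorem \ref{theorem:sufficient:integral:normalized:difference:small} is stated for an arbitrary $\gamma$ satisfying Condition \ref{cond:phi:zero}, and in that generality your key estimate is unproved and, as far as I can tell, false.

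The paper avoids exactly this difficulty by constructing $\bB$ non-explicitly. It first reduces to the case $\bF^{(1)}\geq\bF^{(0)}$ by splitting off an independent field with spectral density $\bF^{(0)}-c_1\gamma^2\I_p$ (with a second comparison through an intermediate $\bF^{(3)}$ in the general case), then shows that the operator $V-I$ on $\cW_{D}(c_1\gamma^2\I_p)$ induced by $\bF^{(1)}-\bF^{(0)}$ is Hilbert--Schmidt and sets $\bB(\bmu,\blambda)^\top=\sum_k\alpha_k\bg_k(\blambda)\bar{\bg}_k(\bmu)^\top$ from its eigenfunctions. The only pointwise input on $\gamma$ is Lemma \ref{lemma:sum:gk^2}, a Bessel-inequality bound $\sum_k\|\bg_k(\blambda)\|^2\leq C/\gamma(\blambda)^2$ involving $\gamma$ at a single frequency --- precisely the device that replaces the two-point ratio control your argument requires. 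To salvage your explicit-kernel route you would need either to prove the displayed supremum finite under Condition \ref{cond:phi:zero} alone (I do not think this is possible) or to add a hypothesis such as polynomial behaviour of $\gamma$, which would weaken the theorem.
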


We remark that the extensions of Theorems 1 and 4 in \cite{Sko:ya:1973}, from the univariate to the multivariate case, require substantial original proof techniques. As such, the proofs of Theorems \ref{theorem:sufficient:representation:b} and \ref{theorem:sufficient:integral:normalized:difference:small} are postponed to the Appendix.

\section{Compatible  Bivariate Mat{\'e}rn and Generalized Wendland Correlation Models}\label{SECTION:APPLICATION:MATERN:WENDLAND}

Let us consider the  parameter vectors $\btheta^{(a)}=(\sigma^{(a)}_{11},\sigma^{(a)}_{22},\rho^{(a)}_{12},\nu,\alpha^{(a)}_{11},\alpha^{(a)}_{22},\alpha^{(a)}_{12})^{\top}$
and $\blambda^{(a)}=(\sigma^{(a)}_{11},\sigma^{(a)}_{22},\rho^{(a)}_{12},\mu,\kappa,\beta^{(a)}_{11},\beta^{(a)}_{22},\beta^{(a)}_{12})^{\top}$, for $a=0,1$.

Our first result gives sufficient conditions for the  compatibility of two bivariate Mat{\'e}rn models with a common  smoothness parameter.

\begin{theorem} \label{theorem:matern:vs:matern}
Let $\nu>0$. If
\begin{equation}\label{condmat}
 \frac{\sigma^{(0)}_{ii}\sigma^{(0)}_{jj}\rho^{(0)}_{ij}}{(\alpha^{(0)}_{ij})^{2\nu}}  = \frac{\sigma^{(1)}_{ii}\sigma^{(1)}_{jj}\rho^{(1)}_{ij}}{(\alpha^{(1)}_{ij})^{2\nu}},  \quad i,j=1,2,
\end{equation}
then for $d=1, 2, 3$, the bivariate matrix valued covariance models 
${\cal BM}_{\btheta^{(0)}}$ and  ${\cal BM}_{\btheta^{(1)}}$ are compatible. 
\end{theorem}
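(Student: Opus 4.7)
The plan is to apply Theorem~\ref{theorem:sufficient:integral:normalized:difference:small}. Condition~\ref{cond:phi:zero} is established for the bivariate Mat{\'e}rn class in Appendix~\ref{app:cond}, so the bulk of the work consists of producing a suitable weight $\gamma$ and estimating the tail integral of $\|\bF^{(0)}-\bF^{(1)}\|^2/\gamma^4$.

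Recall that the univariate Mat{\'e}rn $\sigma^2\,{\cal M}_{\nu,\alpha}(\|\cdot\|)$ has spectral density on $\IR^d$ of the form $K_{\nu,d}\,\sigma^2\alpha^{-2\nu}(\alpha^{-2}+\|\blambda\|^2)^{-(\nu+d/2)}$, with $K_{\nu,d}>0$ depending only on $\nu$ and $d$. Hence the $(i,j)$ entry of the matrix spectral density of ${\cal BM}_{\btheta^{(a)}}$ reads
\[
F^{(a)}_{ij}(\blambda) \;=\; K_{\nu,d}\,\frac{C^{(a)}_{ij}}{\bigl((\alpha^{(a)}_{ij})^{-2}+\|\blambda\|^2\bigr)^{\nu+d/2}}, \qquad C^{(a)}_{ij}\;:=\;\frac{\sigma^{(a)}_{ii}\sigma^{(a)}_{jj}\rho^{(a)}_{ij}}{(\alpha^{(a)}_{ij})^{2\nu}}.
\]
Hypothesis~\eqref{condmat} is precisely $C^{(0)}_{ij}=C^{(1)}_{ij}$ for $i,j\in\{1,2\}$: the leading large-$\|\blambda\|$ coefficients of the two spectral densities coincide entry-wise. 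A first-order Taylor expansion of $t\mapsto(t+\|\blambda\|^2)^{-(\nu+d/2)}$ at $t=0$ then gives, uniformly in $i,j$,
\[
\bigl|F^{(0)}_{ij}(\blambda)-F^{(1)}_{ij}(\blambda)\bigr| \;=\; O\bigl(\|\blambda\|^{-(2\nu+d+2)}\bigr), \qquad \|\blambda\|\to\infty.
\]
Since the largest singular value of a $2\times 2$ matrix is controlled by its Frobenius norm, this upgrades to $\|\bF^{(0)}(\blambda)-\bF^{(1)}(\blambda)\|^2=O\bigl(\|\blambda\|^{-2(2\nu+d+2)}\bigr)$ at infinity, while the left-hand side is clearly bounded on compacts.

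The natural choice of weight is $\gamma(\blambda)=(1+\|\blambda\|^2)^{-(2\nu+d)/4}$, which mirrors the common tail decay of the diagonal spectral densities. With this choice $1/\gamma(\blambda)^4=(1+\|\blambda\|^2)^{2\nu+d}$, so the integrand $\|\bF^{(0)}-\bF^{(1)}\|^2/\gamma^4$ decays like $\|\blambda\|^{-4}$ at infinity and is bounded near the origin. Passing to spherical coordinates, $\int_1^\infty r^{d-5}\,dr$ converges if and only if $d<4$, which pins down the dimension restriction $d\in\{1,2,3\}$. Theorem~\ref{theorem:sufficient:integral:normalized:difference:small} then delivers $P(\bR^{(0)})\equiv P(\bR^{(1)})$, as required.

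The main obstacle is organisational rather than analytical: verifying Condition~\ref{cond:phi:zero} uniformly in the seven-parameter family requires showing that the condition number of $\bF^{(a)}(\blambda)/\gamma(\blambda)^2$ is bounded over $\IR^d$, which rests on the strict positive-definiteness inequality for ${\cal BM}_{\btheta^{(a)}}$ stated in Section~\ref{sec2}, together with the fact that $(\gamma,\gamma)^\top\in\cW_{[-b,b]^d}$ for a suitable $b$. These eigenvalue bounds are the delicate ingredient and are the reason the verification is relegated to the Appendix.
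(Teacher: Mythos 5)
Your overall strategy is the paper's: verify Condition~\ref{cond:phi:zero}, show that the hypothesis \eqref{condmat} makes the leading tail coefficients of the two matrix spectral densities coincide entry-wise, and bound $\int \gamma^{-4}\|\bF^{(0)}-\bF^{(1)}\|^2$ so that Theorem~\ref{theorem:sufficient:integral:normalized:difference:small} applies. Your tail estimate is correct and matches the paper's Lemma~\ref{lem:finite:integral}: the weighted integrand decays like $\|\blambda\|^{-4}$, which is integrable exactly for $d\leq 3$, and the condition-number bound you invoke is the paper's Lemma~\ref{lemma:matern:assympt:invertible}, resting on the positive-definiteness inequality for $\rho_{12}$.

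There is, however, a genuine gap in your choice of weight. Condition~\ref{cond:phi:zero} does not merely ask for a function $\gamma$ with $\gamma^2\asymp(1+\|\blambda\|)^{-2\nu-d}$; it requires $(\gamma,\ldots,\gamma)^\top\in\cW_{[-b,b]^d}$, i.e.\ that $\gamma$ be the Fourier transform of an $L^2$ function supported in $[-b,b]^d$. Your ``natural choice'' $\gamma(\blambda)=(1+\|\blambda\|^2)^{-(2\nu+d)/4}$ cannot satisfy this: by a Paley--Wiener argument, the Fourier transform of a compactly supported $L^2$ function extends to an entire function of exponential type on $\IC^d$, whereas $(1+\|\blambda\|^2)^{-(2\nu+d)/4}$ has singularities at complex frequencies with $\|\blambda\|^2=-1$ (equivalently, its inverse Fourier transform is a Mat\'ern-type function with unbounded support). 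You flag the membership requirement as a ``delicate ingredient'' but then assert it for a $\gamma$ for which it fails. Producing a valid $\gamma$ --- a compactly supported function whose Fourier transform is bounded above \emph{and} below by constant multiples of $(1+\|\blambda\|)^{-2\nu-d}$ --- is the real content of the paper's Lemma~\ref{lemma:exist:phi_zero:matern}, which constructs it from Zastavnyi's Buhmann-type functions after a careful parameter check. Without such a construction, Condition~\ref{cond:phi:zero} is not verified and Theorem~\ref{theorem:sufficient:integral:normalized:difference:small} cannot be invoked; note that once a valid $\gamma$ is in hand, your integral computation goes through unchanged because any admissible $\gamma$ is two-sidedly comparable to your weight.
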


Some comments are in order.
For each of the four pairs of covariance or cross covariance functions, the equality condition \eqref{condmat} is the same as in the univariate case \cite{Zhang:2004}.
\cite{zhang2015doesn} provide conditions for compatibility of ${\cal BM}_{\btheta^{(0)}}$ and  ${\cal BM}_{\btheta^{(1)}}$ for a very special case, where $\alpha_{ij}^{(a)}=\alpha^{(a)}>0$ for all $i,j=1,2$ and $a=0,1$. Hence, the authors consider two separable models.  Thus, Theorem \ref{theorem:matern:vs:matern} allows for a considerable improvement with respect to  \cite{zhang2015doesn} as it allows for different range parameters among the covariance and cross covariance functions. In the case where $\alpha^{(a)}_{11} = \alpha^{(a)}_{22} = \alpha^{(a)}_{12}$ for $a=0,1$, Theorem \ref{theorem:matern:vs:matern} coincides with \cite{zhang2015doesn}.

Our second result gives sufficient conditions for the  compatibility of two bivariate Generalized Wendland models with a common  smoothness parameter.

\begin{theorem} \label{theorem:wendland:vs:wendland}
For a given $\kappa \ge 0$,  let $\mu > d+1/2+\kappa$. If 
\begin{equation} \label{condition1_iff}
\frac{\sigma^{(0)}_{ii}\sigma^{(0)}_{jj}\rho^{(0)}_{ij}}{(\beta^{(0)}_{ij})^{1+2\kappa}} = \frac{\sigma^{(1)}_{ii}\sigma^{(1)}_{jj}\rho^{(1)}_{ij}}{(\beta^{(1)}_{ij})^{1+2\kappa}},\quad i,j=1,2,
\end{equation}
then for $d=1, 2, 3$, the bivariate matrix valued covariance models 
${\cal BW}_{\blambda^{(0)}}$ and ${\cal BW}_{\blambda^{(1)}}$ are compatible.
\end{theorem}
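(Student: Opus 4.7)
The plan is to prove Theorem~\ref{theorem:wendland:vs:wendland} by invoking Theorem~\ref{theorem:sufficient:integral:normalized:difference:small}, in direct parallel with the strategy used for Theorem~\ref{theorem:matern:vs:matern}. Since the Generalized Wendland and Mat\'ern classes exhibit the same behaviour at the origin (with $\kappa$ playing the role of $\nu - 1/2$), the matrix spectral densities should also share the same leading-order decay at infinity. Under condition \eqref{condition1_iff} the leading asymptotic terms of $\bF^{(0)}$ and $\bF^{(1)}$ should cancel entry-wise, leaving a residual sufficiently small to satisfy the integrability criterion in Theorem~\ref{theorem:sufficient:integral:normalized:difference:small}.

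First I would derive the isotropic asymptotic expansion of the Fourier transform of $\mathcal{W}_{\mu,\kappa,\beta}$. Combining the hypergeometric representation tied to \eqref{eq:oneFtwo} with the standard asymptotics of $\mathstrut_1 F_2$, one expects, for $\blambda \in \IR^d$ with $\|\blambda\| \to \infty$,
\[
\widehat{\mathcal{W}_{\mu,\kappa,\beta}}(\blambda)
=
L_{\mu,\kappa,d}\,\beta^{-(1+2\kappa)}\,\|\blambda\|^{-(d+1+2\kappa)}
+
O\bigl(\|\blambda\|^{-(d+3+2\kappa)}\bigr),
\]
with $L_{\mu,\kappa,d}$ depending only on $\mu,\kappa,d$. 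The strict inequality $\mu > d + 1/2 + \kappa$, in contrast to the positive definiteness threshold $\mu \ge (d+1)/2 + \kappa$, is what ensures that the remainder truly has the stated next-order decay; this is precisely where the hypothesis of the theorem is used.

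Multiplying this expansion by $\rho^{(a)}_{ij}\sigma^{(a)}_{ii}\sigma^{(a)}_{jj}$ produces the entry-wise asymptotics of $\bF^{(a)}(\blambda)$ for $a=0,1$, and condition \eqref{condition1_iff} states exactly that the leading coefficient $\rho_{ij}\sigma_{ii}\sigma_{jj}/\beta_{ij}^{1+2\kappa}$ is independent of $a$ for each $(i,j) \in \{1,2\}^2$. The $\|\blambda\|^{-(d+1+2\kappa)}$ term therefore cancels in $\bF^{(0)} - \bF^{(1)}$, and hence
\[
\bnorml \bF^{(0)}(\blambda) - \bF^{(1)}(\blambda) \bnormr^{2}
=
O\bigl(\|\blambda\|^{-2(d+3+2\kappa)}\bigr),
\qquad \|\blambda\| \to \infty.
\]

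Finally I would verify Condition~\ref{cond:phi:zero} for the bivariate Wendland models with the choice $\gamma(\blambda)^2 \asymp (1 + \|\blambda\|^2)^{-(d+1+2\kappa)/2}$; the envelopes $c_1\gamma^2 \I_p \le \bF^{(a)} \le c_2\gamma^2 \I_p$ follow from the matching high-frequency asymptotics combined with the validity inequality \eqref{eq:cond:inf:wendland:longer} on $\rho^{(a)}_{12}$ (which bounds the off-diagonal Hermitian perturbation of the diagonal block), and the membership $(\gamma,\gamma)^\top \in \cW_{[-b,b]^d}$ is established along the lines of Appendix~\ref{app:cond}. The integrand in Theorem~\ref{theorem:sufficient:integral:normalized:difference:small} is then of order $\|\blambda\|^{2(d+1+2\kappa) - 2(d+3+2\kappa)} = \|\blambda\|^{-4}$ at infinity, which is integrable on $\IR^d$ precisely when $d < 4$, explaining the restriction $d \in \{1,2,3\}$. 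The main obstacle will be establishing the second-order spectral expansion rigorously and uniformly: unlike the Mat\'ern case there is no closed-form rational expression for the Wendland spectral density, so the analytic asymptotics of $\mathstrut_1 F_2$ together with the strict inequality on $\mu$ must be handled with care, especially to ensure that the $O(\cdot)$ remainder is uniform in a form suitable for the squared-norm integral bound.
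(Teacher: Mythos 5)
Your overall architecture is the paper's: verify Condition~\ref{cond:phi:zero} via the two-sided polynomial envelope $(1+\|\blambda\|)^{-(d+1+2\kappa)}$ for $\widehat{{\cal BW}}_{\blambda^{(a)}}$ (using the validity inequality on $\rho_{12}^{(a)}$ for the determinant lower bound, and a compactly supported Buhmann-type function for $\gamma$), then apply Theorem~\ref{theorem:sufficient:integral:normalized:difference:small} entrywise. Where you diverge is in how the integrability of $\gamma^{-4}\,|F^{(0)}_{ij}-F^{(1)}_{ij}|^2$ is established. The paper does not re-derive any spectral asymptotics: it observes that $\gamma(\blambda)^{-4}$ is, up to constants, $\bigl(\rho^{(0)}_{ij}\sigma^{(0)}_{ii}\sigma^{(0)}_{jj}\widehat{{\cal W}}_{\mu,\kappa,\beta^{(0)}_{ij}}(\|\blambda\|)\bigr)^{-2}$, so the required integral is exactly the normalized squared-difference integral already proved finite in the univariate Generalized Wendland equivalence result (Theorem~4 of \cite{bevilacqua2019estimation}), and it cites that proof.

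The step you propose in its place contains a genuine error. The expansion
$\widehat{{\cal W}}_{\mu,\kappa,\beta}(z) = L_{\mu,\kappa,d}\,\beta^{-(1+2\kappa)} z^{-(d+1+2\kappa)} + O\bigl(z^{-(d+3+2\kappa)}\bigr)$
is not valid for all $\mu > d+1/2+\kappa$. The large-argument asymptotics of $\mathstrut_1 F_2\bigl(\zeta;\zeta+\tfrac{\mu}{2},\zeta+\tfrac{\mu}{2}+\tfrac12;-\tfrac{(z\beta)^2}{4}\bigr)$ consist of an algebraic series in powers of $z^{-(d+1+2\kappa)-2k}$ \emph{plus} an oscillatory component of order $z^{-(d+1)/2-\kappa-\mu}$ (times a bounded trigonometric factor). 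That oscillatory term is $O(z^{-(d+3+2\kappa)})$ only when $\mu \ge (d+5)/2+\kappa$, which is strictly stronger than the theorem's hypothesis; e.g.\ for $d=1$, $\kappa=0$, $\mu=1.6$ it decays like $z^{-2.6}$, not $z^{-4}$. Consequently your conclusion that the integrand is $O(\|\blambda\|^{-4})$ fails in general, and your attribution of the role of $\mu > d+1/2+\kappa$ is inverted: that hypothesis is not what upgrades the algebraic remainder, it is what makes the \emph{oscillatory} contribution integrable, since after dividing by $\gamma^4$ and squaring it contributes $\|\blambda\|^{\,d+1+2\kappa-2\mu}$, integrable on $\IR^d$ precisely when $\mu > d+\kappa+1/2$. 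The restriction $d\le 3$ comes, as you say, from the second algebraic term giving $\|\blambda\|^{-4}$. Both constraints must be tracked separately; your single remainder bound conflates them and, as written, the integrability claim does not follow for $\mu$ near the threshold. The fix is either to carry the full two-part asymptotics of $\mathstrut_1 F_2$ or, as the paper does, to reduce to the already-established univariate integral.
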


Our third result gives sufficient conditions for the  compatibility of a bivariate Mat{\'e}rn model with a  bivariate Generalized Wendland.
To simplify notation, we let $\alpha_{ij}^{(0)} = \alpha_{ij}$ and $\beta_{ij}^{(1)} = \beta_{ij}$ for $i,j=1,2$.

\begin{theorem}\label{theorem:matern:vs:wendland}
For given $\nu\geq1/2$ and $\kappa \geq 0$,
   if $\nu=\kappa+1/2$, $\mu > d+1/2+\kappa$,  and
\begin{equation}\label{cafu}
 \frac{\sigma^{(0)}_{ii}\sigma^{(0)}_{jj}\rho^{(0)}_{ij}}{\alpha^{2\nu}_{ij}}
 =
C_{\kappa,\mu}
  \frac{\sigma^{(1)}_{ii}\sigma^{(1)}_{jj}\rho^{(1)}_{ij}}{\beta_{ij}^{1+2\kappa}},\quad i,j=1,2,
\end{equation}
$
C_{\kappa,\mu}={\mu\Gamma(2\kappa+\mu+1)}\big/{\Gamma(\mu+1)}$
then for $d=1, 2, 3$, the bivariate matrix valued covariance models ${\cal BM}_{\btheta^{(0)}}$ and
 ${\cal BW}_{\blambda^{(1)}})$ are compatible.
\end{theorem}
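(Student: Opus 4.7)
The plan is to apply Theorem \ref{theorem:sufficient:integral:normalized:difference:small} with $\bR^{(0)} = {\cal BM}_{\btheta^{(0)}}$ and $\bR^{(1)} = {\cal BW}_{\blambda^{(1)}}$. For this, one needs a function $\gamma$ satisfying Condition \ref{cond:phi:zero} simultaneously for the bivariate Mat\'ern and bivariate Generalized Wendland spectral densities, together with the spectral integrability bound
\[
\int_{\IR^d} \gamma(\blambda)^{-4} \bnorml \bF^{(0)}(\blambda) - \bF^{(1)}(\blambda) \bnormr^2 d \blambda < \infty.
\]
A natural choice is $\gamma(\blambda)^2 \asymp (1+\|\blambda\|^2)^{-\nu-d/2}$, consistent with the Mat\'ern envelope and, as shown in Appendix, also sandwiching the Wendland entries once $\nu=\kappa+1/2$ and $\mu > d+1/2+\kappa$. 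Admissibility of such a $\gamma$ in $\cW_{[-b,b]^d}$, together with the remaining verifications of Condition \ref{cond:phi:zero}, is deferred to the Appendix.

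The key step is an entrywise asymptotic expansion at high frequencies. For the bivariate Mat\'ern, $F^{(0)}_{ij}(\blambda) = \sigma^{(0)}_{ii}\sigma^{(0)}_{jj}\rho^{(0)}_{ij}\, c_{\nu,d}\, \alpha_{ij}^{-2\nu}(\alpha_{ij}^{-2}+\|\blambda\|^2)^{-\nu-d/2}$ with $c_{\nu,d}=\Gamma(\nu+d/2)/(\pi^{d/2}\Gamma(\nu))$, yielding
\[
F^{(0)}_{ij}(\blambda) = \sigma^{(0)}_{ii}\sigma^{(0)}_{jj}\rho^{(0)}_{ij}\, c_{\nu,d}\, \alpha_{ij}^{-2\nu}\, \|\blambda\|^{-2\nu-d} + O(\|\blambda\|^{-2\nu-d-2}).
\]
For the bivariate Generalized Wendland, the isotropic spectral density of $\cW_{\mu,\kappa,\beta}$ is, following \cite{bevilacqua2019estimation}, a constant multiple of the $_1F_2$ function appearing in~\eqref{eq:oneFtwo}, and its asymptotic analysis yields
\[
F^{(1)}_{ij}(\blambda) = \sigma^{(1)}_{ii}\sigma^{(1)}_{jj}\rho^{(1)}_{ij}\, \frac{c_{\nu,d}}{C_{\kappa,\mu}}\, \beta_{ij}^{-1-2\kappa}\, \|\blambda\|^{-1-2\kappa-d} + O(\|\blambda\|^{-1-2\kappa-d-2}),
\]
with precisely the constant $C_{\kappa,\mu}$ from~\eqref{cafu}. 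When $\nu=\kappa+1/2$ the two leading-order exponents coincide, and condition~\eqref{cafu} makes the four pairs of leading-order constants match, so that entrywise
\[
\bigl|F^{(0)}_{ij}(\blambda) - F^{(1)}_{ij}(\blambda)\bigr| = O(\|\blambda\|^{-2\nu-d-2}), \qquad \|\blambda\| \to \infty.
\]

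With this second-order cancellation, the integrand in Theorem \ref{theorem:sufficient:integral:normalized:difference:small} satisfies $\gamma(\blambda)^{-4}\bnorml \bF^{(0)}-\bF^{(1)}\bnormr^2 = O((1+\|\blambda\|^2)^{2\nu+d}\cdot \|\blambda\|^{-4\nu-2d-4}) = O((1+\|\blambda\|^2)^{-2})$ at infinity, which is integrable on $\IR^d$ if and only if $d<4$; this is exactly the origin of the restriction $d\in\{1,2,3\}$. Near the origin, all entries of $\bF^{(0)}$ and $\bF^{(1)}$ are continuous and $\gamma^{-4}$ is bounded, so there is no issue. Theorem \ref{theorem:sufficient:integral:normalized:difference:small} then yields $P({\cal BM}_{\btheta^{(0)}}) \equiv P({\cal BW}_{\blambda^{(1)}})$ on the paths of $\bZ$, i.e., the asserted compatibility.

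The main obstacle is pinning down the large-argument asymptotic expansion of the Wendland spectral density with enough precision to extract (i) the leading constant, which must coincide with $c_{\nu,d}/C_{\kappa,\mu}$ under $\nu=\kappa+1/2$ in order to match the Mat\'ern, and (ii) a second-order remainder of order $\|\blambda\|^{-2}$ relative to the leading term. This requires a careful asymptotic analysis of $_1F_2(\zeta;\zeta+\mu/2,\zeta+\mu/2+1/2;-(\beta\|\blambda\|)^2/4)$, either through its integral representation or via the Bailey--Slater theory of generalized hypergeometric functions, together with the Gamma-function duplication formula to produce the constant $C_{\kappa,\mu}=\mu\Gamma(2\kappa+\mu+1)/\Gamma(\mu+1)$; these computations are the technical heart of the proof and are postponed to the Appendix.
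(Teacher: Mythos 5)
Your overall strategy is the paper's: establish Condition~\ref{cond:phi:zero} with $\gamma^2(\blambda)\asymp(1+\|\blambda\|)^{-2\nu-d}$ (the paper's Lemmas~\ref{lemma:matern:assympt:invertible}, \ref{lemma:wendland:assympt:invertible} and \ref{lemma:exist:phi_zero:matern}) and then verify the integral hypothesis of Theorem~\ref{theorem:sufficient:integral:normalized:difference:small} entrywise. Where you diverge is in how that entrywise integral is handled: the paper observes that $\gamma^{-4}$ is comparable to $\widehat{{\cal M}}_{\nu,\alpha_{ij}}^{-2}$, so the required bound is \emph{exactly} the univariate integral already proved in Theorems~5 and~6 of Bevilacqua et al.\ (2019), and the only new work is identifying $C_{\kappa,\mu}$ with the constants there. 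You instead propose to re-derive the high-frequency analysis from scratch via a second-order expansion of the spectral densities.

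That is where there is a genuine gap. Your claimed pointwise estimate $|F^{(0)}_{ij}(\blambda)-F^{(1)}_{ij}(\blambda)|=O(\|\blambda\|^{-2\nu-d-2})$ is not implied by the hypothesis $\mu>d+1/2+\kappa$. Because ${\cal W}_{\mu,\kappa,\beta}$ is compactly supported, the large-argument expansion of $\mathstrut_1 F_2\big(\zeta;\zeta+\frac{\mu}{2},\zeta+\frac{\mu}{2}+\frac12;-\frac{(z\beta)^2}{4}\big)$ contains, besides the algebraic part $\mathrm{const}\cdot z^{-2\zeta}(1+O(z^{-2}))$, an oscillatory contribution of absolute order $z^{-\zeta-\mu}$, i.e.\ of \emph{relative} order $z^{\zeta-\mu}$ with $\zeta=(d+1)/2+\kappa$. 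This relative order is $O(z^{-2})$ only when $\mu\geq\zeta+2=(d+5)/2+\kappa$, which for $d=1,2,3$ is strictly stronger than the assumed $\mu>d+1/2+\kappa$; on the intermediate range of $\mu$ your pointwise bound is false. The conclusion nevertheless survives, because what Theorem~\ref{theorem:sufficient:integral:normalized:difference:small} actually requires is square-integrability of $\gamma^{-2}(F^{(0)}-F^{(1)})$, and the oscillatory term contributes $\int \|\blambda\|^{4\nu+2d-2\zeta-2\mu}\,d\blambda$ at infinity, which (with $\nu=\kappa+1/2$) converges precisely when $\mu>d+1/2+\kappa$. So the hypothesis on $\mu$ is doing real analytic work that your write-up hides: attributing the restriction $d\in\{1,2,3\}$ solely to the $O((1+\|\blambda\|^2)^{-2})$ tail of the matched algebraic parts misallocates where convergence can fail. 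To repair the argument you should split $F^{(0)}-F^{(1)}$ into (i) the difference of algebraic parts, which cancels at leading order under \eqref{cafu} and is handled as in Lemma~\ref{lem:finite:integral}, and (ii) the oscillatory remainder of the Wendland density, whose squared integral against $\gamma^{-4}$ must be bounded separately using $\mu>d+1/2+\kappa$ --- or simply quote the univariate computation as the paper does.
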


Theorems \ref{theorem:wendland:vs:wendland} and \ref{theorem:matern:vs:wendland} have no existing counterpart, even in the restricted setting where $\alpha_{ij}^{(a)}=\alpha^{(a)}$
and $\beta_{ij}^{(a)}=\beta^{(a)}$
for all $i,j=1,2$ and $a=0,1$. Again, for each pair of covariance or cross covariance functions, the conditions \eqref{condition1_iff} and \eqref{cafu} on the covariance parameters are the same as in the univariate case in \cite{bevilacqua2019estimation} (see the proof of Theorem \ref{theorem:matern:vs:wendland} that relates $C_{\kappa,\mu}$ to the constants used in \cite{bevilacqua2019estimation}).

\section{Cokriging Predictions with Bivariate Generalized Wendland and Mat{\'e}rn   models}\label{5}

We now consider  prediction at a new target location $\ss_0$ given  a realization of a zero mean bivariate Gaussian field, using the bivariate Mat{\'e}rn and Generalized Wendland model, under fixed domain asymptotics.
Specifically, we focus on two properties: asymptotic efficiency of  prediction and asymptotically correct estimation of prediction variance.
\cite{Stein:1988}, in the univariate case, shows that both asymptotic
properties hold when kriging prediction is performed with two  equivalent Gaussian measures.

Let  $\{ \ss_{i,1},\ldots,\ss_{i,n_i} \in D \subset \mathbb{R}^d \}$, $i=1,2$, be any two sets of two-by-two distinct observation locations.
Let $\bZ_{n_1,n_2}=(\bZ^{\top}_{1;n_1},\bZ^{\top}_{2;n_2})^{\top}$ be the observation vector obtained from a bivariate Gaussian field
$\{ \bZ(\ss)=(Z_1(\ss),Z_2(\ss))^{\top},\ss \in D  \}$, 
where $\bZ_{i;n_i}=(Z_i(\ss_{i,1}),\dots,\linebreak[1] Z_i(\ss_{i,n_i}) )^{\top}$, $i=1,2$. Remark that we do not necessarily assume collocated observation locations, that is the sets $\{ \ss_{1,1},\ldots,\ss_{1,n_1} \}$ and $\{ \ss_{2,1},\ldots,\ss_{2,n_2} \}$ can be different.

Suppose we want to predict the first of the two components of the  bivariate random field 
at $\ss_0$ that is  $Z_1(\ss_0)$, $\ss_0\in D$, using  a misspecified bivariate model. For simplicity we only consider prediction for the first as symmetrical arguments hold for the second component.
Specifically, we denote with $\mathcal{BC}^{(a)}=[C^{(a)}_{ij}]_{i,j=1}^{2}$, $a=0,1$ the true and misspecified  bivariate matrix covariance function respectively.

Let $\boldsymbol{c}^{(a)}_{1;n_1,n_2}=(\boldsymbol{c}^{(a)}_{1;1;n_1}{\!}^{\top},\boldsymbol{c}^{(a)}_{2;1;n_2}{\!}^{\top})^{\top}$ with 
$\boldsymbol{c}^{(a)}_{i;1;n_i}=[C^{(a)}_{1i}(\|\boldsymbol{s}_0-\boldsymbol{s}_{i,\ell} \|)]_{\ell=1}^{n_i}$, $i=1,2$,
the vector covariances between the location to predict and  $\bZ_{n_1,n_2}$. Let also
$\C^{(a)}_{n_1,n_2}$ be the $(n_1+n_2) \times (n_1+n_2)$ matrix, with block $i,j$, of size $n_i \times n_j$, given by $[C^{(a)}_{ij}(\|\boldsymbol{s}_{i,\ell}-\boldsymbol{s}_{j,k} 
\|)]_{\ell=1,k=1}^{n_i,n_j}$, $i,j=1,2$,
the variance-covariance matrix associated with $\bZ_{n_1,n_2}$.

The (misspecifed) optimal predictor of $Z_1(\ss_0)$, using $\mathcal{BC}^{(0)}$ and  $\mathcal{BC}^{(1)}$, is given by, 
$$
\widehat{Z}^{\mathcal{BC}^{(a)}}_{1;n_1,n_2}(\ss_0)=\boldsymbol{c}^{(a)}_{1;n_1,n_2}{\!}^{\top}(\C^{(a)}_{n_1,n_2})^{-1} \bZ_{n_1,n_2}.$$
Under the  correct model $\mathcal{BC}^{(0)}$,  the mean squared prediction error based on $\mathcal{BC}^{(1)}$ is given by
\begin{align}\label{mse_miss3}\begin{split}
\var_{\mathcal{BC}^{(0)}}\bigl[\widehat{Z}^{\mathcal{BC}^{(1)}}_{1;n_1,n_2}(\ss_0)-Z_1(\boldsymbol{s}_0)\bigr]&=(\sigma_{11}^{(0)})^2-2\boldsymbol{c}^{(1)}_{1;n_1,n_2}{\!}^{\top}(\C^{(1)}_{n_1,n_2})^{-1}\boldsymbol{c}^{(0)}_{1;n_1,n_2}\\ &\quad+ \boldsymbol{c}^{(1)}_{1;n_1,n_2}{\!}^{\top}(\C^{(1)}_{n_1,n_2})^{-1}\C^{(0)}_{n_1,n_2} (\C^{(1)}_{n_1,n_2})^{-1}\boldsymbol{c}^{(1)}_{1;n_1,n_2}
\end{split}
\end{align}

and if the  true and misspecified models coincide then (\ref{mse_miss3}) simplifies for $a=0,1$ to
\begin{equation}\label{mse_miss4}
\var_{\mathcal{BC}^{(a)}}\bigl[\widehat{Z}^{\mathcal{BC}^{(a)}}_{1;n_1,n_2}(\ss_0)-Z_1(\boldsymbol{s}_0)\bigr]=(\sigma_{11}^{(a)})^2 -\boldsymbol{c}^{(a)}_{1;n_1,n_2}{\!}^{\top}(\C^{(a)}_{n_1,n_2})^{-1}\boldsymbol{c}^{(a)}_{1;n_1,n_2}.
\end{equation}
The following  Theorem follows directly from the arguments in \cite[Section~4.3]{Stein:1999} extended to the bivariate case. Hence the proof is omitted. We remark that these arguments indeed do not require collocated observation locations.

\begin{theorem}\label{theorem:kauf:23}
Let $\{ \ss_{1,1},\ldots,\ss_{1,n_1} \}$ and $\{ \ss_{2,1},\ldots,\ss_{2,n_2} \}$ be dense in $D$ as $n_1,n_2 \to \infty$.
For all $\boldsymbol{s}_0\in D$,
if $P(\mathcal{BC}^{(0)})\equiv P(\mathcal{BC}^{(1)})$ on the paths of $\bZ$ then:

\begin{enumerate}
  \item As $n_1,n_2\to \infty$ 
  \begin{equation}\label{kauf3_1} \frac{\var_{\mathcal{BC}^{(0)}}\bigl[ \widehat{Z}^{\mathcal{BC}^{(1)}}_{1;n_1,n_2}(\ss_0)-Z_1(\boldsymbol{s}_0)\bigr]}{\var_{\mathcal{BC}^{(0)}}\bigl[
\widehat{Z}^{\mathcal{BC}^{(0)}}_{1;n_1,n_2}(\ss_0)-Z_1(\boldsymbol{s}_0)\bigr]}{\,\longrightarrow\,}1.
        \end{equation}
   \item As $n_1,n_2\to \infty$  
     \begin{equation}\label{kauf3_2} \frac{\var_{\mathcal{BC}^{(1)}}\bigl[ \widehat{Z}^{\mathcal{BC}^{(1)}}_{1;n_1,n_2}(\ss_0)-Z_1(\boldsymbol{s}_0)\bigr]}{\var_{\mathcal{BC}^{(0)}}\bigl[
\widehat{Z}^{\mathcal{BC}^{(1)}}_{1;n_1,n_2}(\ss_0)-Z_1(\boldsymbol{s}_0)\bigr]}{\,\longrightarrow\,}1.
        \end{equation}
   \end{enumerate}
\end{theorem}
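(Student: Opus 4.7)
The plan is to transcribe Stein's argument from \cite[Section~4.3]{Stein:1999} to the bivariate setting, viewing the concatenated observation vector $\bZ_{n_1,n_2}$ of length $n_1+n_2$ as a single Gaussian observation with covariance matrix $\C^{(a)}_{n_1,n_2}$ under $P(\mathcal{BC}^{(a)})$. The argument requires only three inputs: joint Gaussianity of $(\bZ_{n_1,n_2},Z_1(\ss_0))$ under both measures (automatic); vanishing of the optimal mean square prediction error under the dense-design hypothesis; and equivalence of the two Gaussian measures on the $\sigma$-algebra generated by $\bZ$ together with $Z_1(\ss_0)$ (the assumption). None of these ingredients is sensitive to whether the underlying field is uni- or multivariate; the only structural change is that $\C^{(a)}_{n_1,n_2}$ is now a $2\times 2$ block matrix with marginal-covariance diagonal blocks and cross-covariance off-diagonal blocks.

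For the vanishing of the optimal error, denseness of $\{\ss_{1,k}\}_k$ in $D$ together with mean-square continuity of $Z_1$ under $\mathcal{BC}^{(0)}$ implies that $Z_1(\ss_0)$ lies in the $L^2(P(\mathcal{BC}^{(0)}))$-closure of the linear span of $\{Z_1(\ss_{1,k})\}_k$, hence a fortiori of the linear span of $\bZ_{n_1,n_2}$; therefore $\var_{\mathcal{BC}^{(0)}}[\widehat{Z}^{\mathcal{BC}^{(0)}}_{1;n_1,n_2}(\ss_0)-Z_1(\ss_0)]\to 0$. For (\ref{kauf3_1}), the lower bound by $1$ is immediate since $\widehat{Z}^{\mathcal{BC}^{(0)}}_{1;n_1,n_2}(\ss_0)$ is by construction the $L^2(P(\mathcal{BC}^{(0)}))$-projection of $Z_1(\ss_0)$ onto the observation span. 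The matching upper bound reduces, after expanding (\ref{mse_miss3}) and subtracting (\ref{mse_miss4}) with $a=0$, to an $o$-estimate on a quadratic form in the difference $(\C^{(1)}_{n_1,n_2})^{-1}\boldsymbol{c}^{(1)}_{1;n_1,n_2} - (\C^{(0)}_{n_1,n_2})^{-1}\boldsymbol{c}^{(0)}_{1;n_1,n_2}$; this is Stein's result in \cite{Stein:1993}, whose proof rests on a Hilbert-Schmidt control of $(\C^{(1)}_{n_1,n_2})^{-1/2}\C^{(0)}_{n_1,n_2}(\C^{(1)}_{n_1,n_2})^{-1/2} - \I$ furnished by the classical characterization of equivalence of Gaussian measures \cite{Ibragimov-Rozanov:1978}. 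Claim (\ref{kauf3_2}) then follows by the symmetric argument, exchanging the roles of $\mathcal{BC}^{(0)}$ and $\mathcal{BC}^{(1)}$.

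\textbf{Main obstacle.} The only delicate point is the passage from equivalence of the measures on the paths of $\bZ$ to the Hilbert-Schmidt bound above. For scalar Gaussian fields this is classical; for multivariate fields it follows in the same way, provided one identifies the ambient Hilbert space with $L^{2,p}_D$, an identification already implicit in Section~\ref{sec2} and in the proofs of Theorems~\ref{theorem:sufficient:representation:b} and~\ref{theorem:sufficient:integral:normalized:difference:small}. With this identification in hand, Stein's argument transfers essentially verbatim, which is why the authors chose to omit the proof here.
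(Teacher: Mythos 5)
Your proposal is correct and matches the paper's approach: the authors omit the proof precisely because, as you argue, Stein's argument from Section~4.3 of \cite{Stein:1999} transfers essentially verbatim to the bivariate setting once the concatenated observation vector is treated as a single Gaussian vector, with equivalence of the measures supplying the uniform Hilbert--Schmidt control and denseness of both location sets supplying the convergence of the optimal error. (One minor attribution slip: the operator-theoretic input you need is the Feldman--H\'ajek/Ibragimov--Rozanov characterization as used in \cite{Stein:1999}, Section~4.3, rather than \cite{Stein:1993}, which gives spectral rather than operator-theoretic conditions; this does not affect the substance of your argument.)
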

Then we can apply Theorem   \ref{theorem:kauf:23}
using the results on the equivalence of Gaussian measures given  in Section~\ref{SECTION:APPLICATION:MATERN:WENDLAND}
between two bivariate Mat{\'e}rn  models, two bivariate Generalized Wendland models and between a bivariate Mat{\'e}rn  and a Generalized Wendland model.

In particular, we apply  Theorem   \ref{theorem:kauf:23} when considering a bivariate Mat{\'e}rn and a bivariate Generalized Wendland model, probably the most interesting case.
With this   goal in mind, we consider the cases
$\mathcal{BC}^{(0)}= {\cal BM}_{\btheta^{(0)}}$ and  $\mathcal{BC}^{(1)}={\cal BW}_{\blambda^{(1)}}$
 defined in (\ref{eq:piip5}) and (\ref{eq:piip6}).

\begin{theorem}\label{kauf_3}
For given $\nu\geq1/2$ and $\kappa \geq 0$, consider $P({\cal BM}_{\btheta^{(0)}})$ and $P({\cal BW}_{\blambda^{(1)}})$. Let for simplicity $\alpha_{ij}^{(0)} = \alpha_{ij}$ and $\beta_{ij}^{(1)} = \beta_{ij}$ for $i,j=1,2$. 
Assume that $\nu = \kappa + 1/2$, $\mu > d+1/2+\kappa$ and that \eqref{cafu} holds. Let $\{ \ss_{1,1},\ldots,\ss_{1,n_1} \}$ and $\{ \ss_{2,1},\ldots,\ss_{2,n_2} \}$ be dense in $D$ as $n_1,n_2 \to \infty$. 
Then for $d=1, 2, 3$:
\begin{enumerate}
\item As $n_1,n_2\to \infty$ 
  \begin{equation}\label{miss1} \frac{\var_{{\cal BM}_{\btheta^{(0)}}}\bigl[ \widehat{Z}^{{\cal BW}_{\blambda^{(1)}}}_{1;n_1,n_2}(\ss_0)-Z_1(\boldsymbol{s}_0)\bigr]}{\var_{{\cal BM}_{\btheta^{(0)}}}\bigl[
\widehat{Z}^{{\cal BM}_{\btheta^{(0)}}}_{1;n_1,n_2}(\ss_0)-Z_1(\boldsymbol{s}_0)\bigr]}{\,\longrightarrow\,}1.
        \end{equation}
   \item As $n_1,n_2\to \infty$  
     \begin{equation}\label{miss2} \frac{\var_{{\cal BW}_{\blambda^{(1)}}}\bigl[ \widehat{Z}^{{\cal BW}_{\blambda^{(1)}}}_{1;n_1,n_2}(\ss_0)-Z_1(\boldsymbol{s}_0)\bigr]}{\var_{{\cal BM}_{\btheta^{(0)}}}\bigl[
\widehat{Z}^{{\cal BW}_{\blambda^{(1)}}}_{1;n_1,n_2}(\ss_0)-Z_1(\boldsymbol{s}_0)\bigr]}{\,\longrightarrow\,}1.
        \end{equation}
   \end{enumerate}
\end{theorem}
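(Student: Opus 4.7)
The plan is to observe that Theorem \ref{kauf_3} is essentially a direct corollary of the two preceding theorems, namely Theorem \ref{theorem:matern:vs:wendland} (cross-model compatibility of bivariate Matérn and Generalized Wendland) and Theorem \ref{theorem:kauf:23} (asymptotic efficiency and correct variance under equivalent Gaussian measures). So no new analytic machinery should be required; the task reduces to verifying hypotheses and invoking these results in the right order.

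First I would check that the hypotheses of Theorem \ref{theorem:matern:vs:wendland} are exactly the ones assumed in Theorem \ref{kauf_3}: for $\nu \geq 1/2$ and $\kappa \geq 0$ with $\nu = \kappa + 1/2$, $\mu > d + 1/2 + \kappa$, and the parametric identity \eqref{cafu} (with $C_{\kappa,\mu} = \mu \Gamma(2\kappa + \mu + 1)/\Gamma(\mu+1)$) for all four pairs $(i,j)$ with $i,j = 1,2$. These are precisely the standing assumptions of Theorem \ref{kauf_3}. Consequently, for $d = 1, 2, 3$, Theorem \ref{theorem:matern:vs:wendland} yields that $\mathcal{BM}_{\btheta^{(0)}}$ and $\mathcal{BW}_{\blambda^{(1)}}$ are compatible, i.e., $P(\mathcal{BM}_{\btheta^{(0)}}) \equiv P(\mathcal{BW}_{\blambda^{(1)}})$ on the paths of $\boldsymbol{Z}$.

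Next I would apply Theorem \ref{theorem:kauf:23} with the identifications $\mathcal{BC}^{(0)} = \mathcal{BM}_{\btheta^{(0)}}$ and $\mathcal{BC}^{(1)} = \mathcal{BW}_{\blambda^{(1)}}$. The density assumption on the sampling sets $\{\ss_{1,1}, \ldots, \ss_{1,n_1}\}$ and $\{\ss_{2,1}, \ldots, \ss_{2,n_2}\}$ in $D$, imposed in the statement of Theorem \ref{kauf_3}, matches verbatim the density assumption of Theorem \ref{theorem:kauf:23}; note in particular that collocation is not required, which is consistent with both statements. Conclusions \eqref{kauf3_1} and \eqref{kauf3_2} of Theorem \ref{theorem:kauf:23} then translate directly into \eqref{miss1} and \eqref{miss2}, respectively.

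Since the argument is a two-step concatenation and no genuine obstacle arises, I do not anticipate any technical subtlety. The only place where one needs to be a little careful is in confirming that the equivalence of Gaussian measures provided by Theorem \ref{theorem:matern:vs:wendland} is precisely of the form required by the hypothesis of Theorem \ref{theorem:kauf:23}, namely equivalence on the paths of $\boldsymbol{Z}$ viewed as a bivariate Gaussian random field on $D$; this is exactly the notion defined in Section \ref{sec2} and used throughout, so there is nothing further to check. The proof can thus be stated in one or two lines as an immediate consequence, which is presumably why the authors omit it.
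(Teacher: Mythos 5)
Your proposal is correct and matches the paper's own (implicit) argument exactly: the text preceding Theorem~\ref{kauf_3} states that it is obtained by applying Theorem~\ref{theorem:kauf:23} with $\mathcal{BC}^{(0)}={\cal BM}_{\btheta^{(0)}}$ and $\mathcal{BC}^{(1)}={\cal BW}_{\blambda^{(1)}}$, whose equivalence-of-measures hypothesis is supplied by Theorem~\ref{theorem:matern:vs:wendland} under the stated conditions. Nothing further is needed.
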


An important implication of \eqref{miss1} and \eqref{miss2} is that, if $\nu=\kappa+1/2$, $\mu > d+1/2+\kappa$, and under  condition~\eqref{cafu}, asymptotic cokriging prediction efficiency and asymptotically correct estimates of error variance are achieved using a bivariate Generalized Wendland model when the true model is bivariate Mat{\'e}rn.
This result has important practical implications, since the Generalized Wendland matrix covariance functions, unlike the Mat\'ern ones, are compactly supported. Hence, using a Generalized Wendland model provides important computational benefits, by enabling to exploit sparse matrix structures \cite{Furrer:2006,Kaufman:Schervish:Nychka:2008,bevilacqua2019estimation}, with a typically negligible loss of statistical accuracy if the true matrix covariance function is in the Mat\'ern class.

\section{Numerical Illustration}\label{sec:numill}
In this section we present a numerical illustration of the rates of convergence of the ratios~\eqref{miss1} and~\eqref{miss2}.
The mean square prediction error (MSPE) for kriging and cokriging can be interpreted as a statistic of the observation locations in relation to the prediction location, i.e., the MSPE essentially depends on the distance to the nearest observation location(s). Thus we work with regular grids specified as follows.
In one dimension, the observation locations for the primary variable (component of the multivariate random field) are $(k-1)/(n_1-1)$, $k=1,\dots,n_1$, with $n_1$ even.
For the secondary variable we select $(\ell-1)/(n_2-1)$ with $n_2=n_1,1.5n_1,3n_1$, $\ell=1,\ldots,n_2$. In two dimensions we take $n_1=n_x^2$ observation locations at $\bigl((k-1)/(n_x-1),(k'-1)/(n_x-1)\bigr)$, with $n_x$ even, $k,k'=1,\dots,n_x$. For the secondary variable we select a similar grid with $n_2=n_x^2, (1.5n_x)^2, (3n_x)^2$. 
Prediction for the first variable is at the center of the domain, i.e., $\s_0=0.5$ and $\s_0=(0.5,0.5)$, respectively.

We consider a bivariate Matérn model with
$\btheta^{(0)}= (\sigma^{(0)}_{11},\sigma^{(0)}_{22},\rho_{12},\nu,\alpha_{11},\alpha_{22},\alpha_{12})^\top=(1.2,1.1,$ $0.2,1/2,0.05,0.09,0.07)^\top$. 
In the first illustration we keep the same marginal variances and the same correlation parameter for the bivariate Generalized Wendland model with $\kappa=1$ and $\mu=5$. The range parameters are chosen according to the equivalence condition~\eqref{cafu} and yield for $\kappa=1$ the parameter vector   $\blambda^{(1)}=(1.2,1.1,0.2,1,0.297,$ $0.535, 0.416)^\top$.

\begin{figure}[h]
\includegraphics[width=\textwidth]{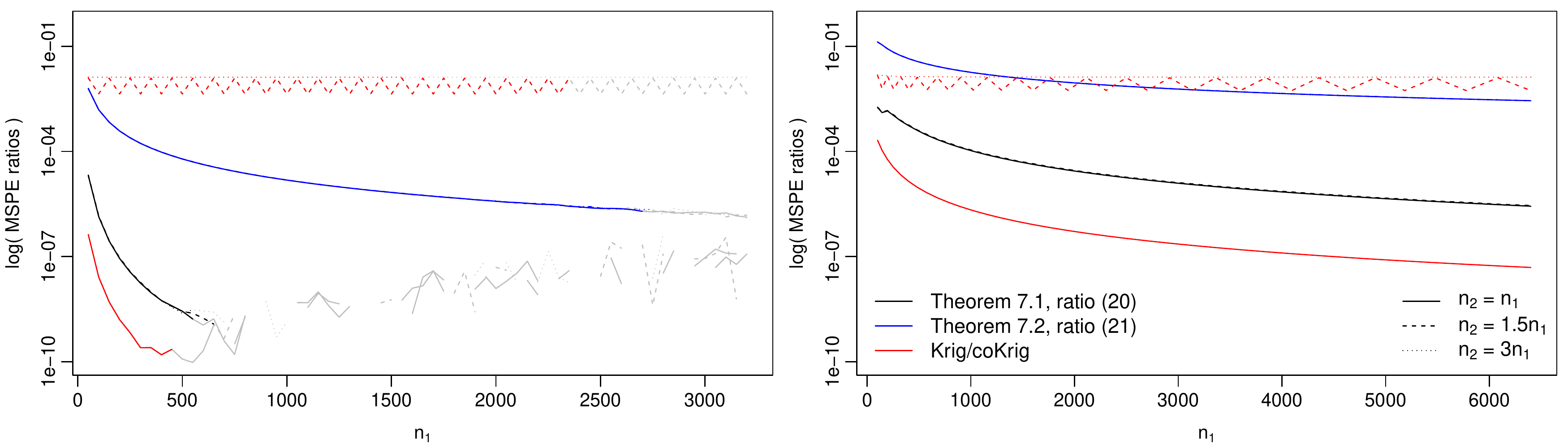}
\caption{Log of ratios~\eqref{miss1}, \eqref{miss2} and log of MSPE of  kriging versus cokriging  as a function of $n_1$ in one (left) and two (right) dimensions. Gray lines indicate numerical instabilities. \label{fig1}}
\end{figure}

Figure~\ref{fig1} illustrates the ratios~\eqref{miss1}, \eqref{miss2} and the ratio of MSPE of kriging versus cokriging in one and two dimensions. 
The convergence of the ratios is fast, and numerical instabilities are observed in one dimension for quite small $n_1$. 
Except for the kriging/cokriging ratio, increasing the number of location points for the secondary variable has only a very minor effect and can hardly be distinguished visually. The saw-tooth shape of the dashed red line is due to the alternating even/odd number of location observations $n_2$. 
For a fixed $n_1$, there is of course a nonlinear relation between the ratios and where we exactly place the point to predict within the observed grid. In one dimension, the log-ratio can be reduced by roughly a factor of two if we move the prediction location from $0.5$ towards the nearest right observation location $n_1/(2n_1-2)$. 
The left panel of Figure~\ref{fig2} illustrates the log ratios as a function of the grid spacing and emphasizes again that the MSPE is essentially driven by the locations of the nearby observations. The convergence rate for~\eqref{miss1} is slightly higher compared to \eqref{miss2} but equivalent to the ratio kriging versus cokriging in case of $n_1=n_2$.

\begin{figure}[h]
\includegraphics[width=\textwidth]{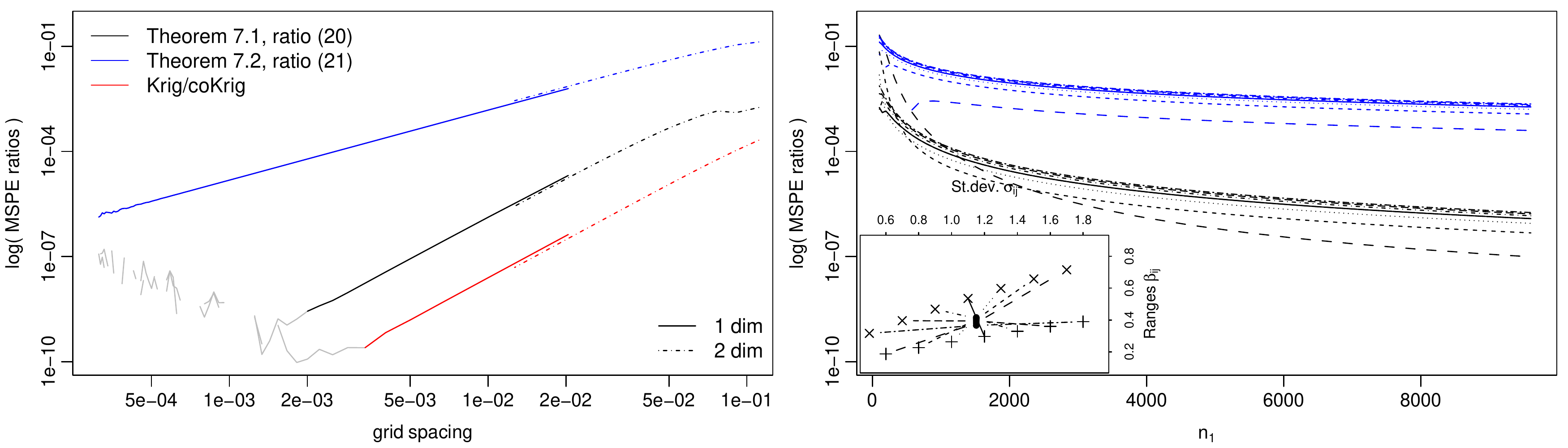}
\caption{Left panel: Log of ratios~\eqref{miss1}, \eqref{miss2} and log of MSPE of kriging versus cokriging in one (solid) and two dimensions (dash-dotted) as a function of grid spacing. Transparent lines indicate numerical instabilities. Right panel: Log of ratios~\eqref{miss1}, \eqref{miss2} for different parameter settings of the bivariate Wendland model. The inset figure shows the different configurations with varying $\sigma_{11}$ ($+$) and $\sigma_{22}$ ($\times$) and induced range parameters $\beta_{11}$, $\beta_{12}$ and $\beta_{22}$. ($\beta_{12}$ is plotted at fixed $x$-axis value $1.15$.) In both panels $n_2 = n_1$. \label{fig2}}
\end{figure}

To study the effect of different ranges we modify the variance parameters of the Generalized Wendland by $\bigl(\sigma^{(1)}_{11}+\delta,\sigma^{(1)}_{22}-\delta\bigr)$, $\delta=-0.6,-0.4,\dots,0.6$. 
The range parameters are updated according to~\eqref{cafu}, leading to shorter ranges for smaller standard deviations. 
The right panel of Figure~\ref{fig2} shows that the ratios are quite stable with respect to different ranges. Increasing the range parameter of the secondary variable reduces the ratio. 
Hence, it is possible to choose a range parameter $\beta_{ij}$ tailored to available computing and memory amount with a bearable cost in terms of MSPE.

Note that for other values of the smoothness parameter $\nu$ the rates themselves change but the conclusions remain the same. Similarly, scaling the ranges of the covariance parameters of the Matérn model has no effect on the asymptotic results as the scaling is essentially equivalent to adapting the number of observation points.


\appendix

\section{Proofs for Section \ref{SECTION:GENERAL:CONDITIONS:EQUIVALENCE}}

\begin{proof}[{\bf Proof of Theorem \ref{theorem:sufficient:representation:b}}]

Let $\bmu \in \IR^d$. We consider the integral operator $V$ on $\cW_{D}(\bF^{(0)})$ defined by 
\[
(V \bof)(\bmu)
=
\int_{\IR^d} 
  \bB( \bmu , \blambda )  \bF^{(0)}(\blambda)
\bof(\blambda)
d \blambda.
\]
We note that \eqref{eq:the:finite:integral} in concert with Cauchy-Schwarz inequality imply
\begin{equation} \label{eq:check:Vf:defined}
\int_{\IR^d}
\| \B(\bmu , \blambda ) \|
\|\F^{(0)}(\blambda)\|
\| \bof(\blambda)\|
d \blambda 
\leq 
\sqrt{
\int_{\IR^d}
\| \B(\bmu , \blambda ) \|^2
\|\F^{(0)}(\blambda)\|
d \blambda 
} 
\sqrt{
\int_{\IR^d}
\|\F^{(0)}(\blambda)\|
\| \bof(\blambda)\|^2
d \blambda. 
}
\end{equation}
The first integral in \eqref{eq:check:Vf:defined} is finite for almost all $\bmu \in \IR^d$ from \eqref{eq:the:finite:integral}. The second integral in \eqref{eq:check:Vf:defined} is smaller than a constant times $\int_{ \IR^d }  \bar{\bof}(\blambda)^\top \bF^{(0)}(\blambda) \bof(\blambda) d \blambda < \infty$ from Condition \ref{cond:phi:zero}. Hence, $V \bof$ is well-defined as a function from $\IR^d$ to $\IC^p$.

Let us now check that $V \bof$ belongs to $\cW_{D}(\bF^{(0)})$ when $\bof$ belongs to $\cW_{D}(\bF^{(0)})$. We use $f \in \cW_{D}(\bF^{(0)})$ and repeatedly apply Cauchy-Schwarz inequality, so that, for a finite constant $c$,
\begin{align*}
\| V \bof &\|^2_{\cW_{D}(\bF^{(0)})} 
 = 
\int_{\IR^d} 
\int_{\IR^d} 
\int_{\IR^d} 
\bar{\bof}(\blambda)^\top 
\bF^{(0)}(\blambda)
\bar{\bB}(\blambda,\t)
\bF^{(0)}(\t)
\bB(\t,\bmu)  \bF^{(0)}(\bmu)
\bof(\bmu)
d \t d \blambda d \bmu \\
&\leq  
\int_{\IR^d} 
\int_{\IR^d} 
\int_{\IR^d}
\| \bof(\blambda)\|\, 
\|
\bB(\blambda,\t)
\|\,
\| \bF^{(0)}(\blambda)
\|\,
\|
\bF^{(0)}(\t)
\|\,
\| \bB(\bmu,\t)
\|
\\
& \qquad
\| \bF^{(0)}(\bmu)
\|\,
\| \bof(\bmu)\|
d \t d\blambda d\bmu
\\
&= 
\int_{\IR^d} 
d \t
\|\bF^{(0)}(\t)\|
\left(
\int_{\IR^d}
d \blambda 
\|\bF^{(0)}(\blambda)\|\, 
\| \bof(\blambda)\| \|
\bB(\blambda,\t)
\|
\right)
\\
&\qquad
\left(
\int_{\IR^d}
d \bmu 
\|\bF^{(0)}(\bmu)\|\,
\| \bof(\bmu)\|\,\|
\bB(\bmu,\t)
\|
\right)
\\
&\leq 
\int_{\IR^d} 
d \t
\|\bF^{(0)}(\t)\|
\sqrt{
\left(
\int_{\IR^d}
d \blambda 
\|\bF^{(0)}(\blambda)\|\,
\| \bof(\blambda)\|^2
\right)
\left(
\int_{\IR^d}
d \blambda 
\|\bF^{(0)}(\blambda)\|\,
\|\bB(\blambda,\t)\|^2
\right)
}
\\
&\qquad
\sqrt{
\left(
\int_{\IR^d}
d \bmu 
\|\bF^{(0)}(\bmu)\|\,
\| \bof(\bmu)\|^2
\right)
\left(
\int_{\IR^d}
d \bmu
\|\bF^{(0)}(\bmu)\|\,
\|\bB(\bmu,\t)\|^2
\right)
}\\
& \leq 
c
\left(
\int_{\IR^d}
d \blambda 
\bar{ \bof}(\blambda)^\top 
\bF^{(0)}(\blambda)
 \bof(\blambda)
\right)
\int_{\IR^d} 
d \t
\|\bF^{(0)}(\t)\|
\\
& \qquad
\sqrt{
\int_{\IR^d}
d \blambda 
\|\bF^{(0)}(\blambda)\|\,
\|\bB(\blambda,\t)\|^2
}
\sqrt{
\int_{\IR^d}
d \bmu
\|\bF^{(0)}(\bmu)\|\,
\|\bB(\bmu,\t)\|^2
}
\\
 & = 
 c \| \bof \|^2_{\cW_{D}(\bF^{(0)})}
 \int_{\IR^d} 
d \t
\|\bF^{(0)}(\t)\|
\int_{\IR^d}
d \blambda 
\|\bF^{(0)}(\blambda)\|\, 
\|\bB(\blambda,\t)\|^2
~~ <  + \infty. \qquad\qquad
\end{align*}
In the strict inequality below, we have used \eqref{eq:the:finite:integral}. In the second to last ``$\leq$'' we have used
Condition \ref{cond:phi:zero}. 

Hence $V$ maps $\cW_{D}(\bF^{(0)})$ to $\cW_{D}(\bF^{(0)})$. 
Let us check that $V$ is Hermitian. For any $\bof_1 , \bof_2 \in \cW_{D}(\bF^{(0)})$, we have
\begin{align*}
( V \bof_1 , \bof_2 )_{\cW_{D}(\bF^{(0)})}
= &
\int_{\IR^d}
d \blambda
\bar{\bof_2}(\blambda)^\top
\bF^{(0)}(\blambda)
\left(
\int_{\IR^d}
d \bmu 
\bB(\blambda,\bmu)
\bF^{(0)}(\bmu)
\bof_1(\bmu)
\right)
\\
= &
\int_{\IR^d}
\int_{\IR^d}
d \blambda 
d \bmu 
\bar{\bof_2}(\blambda)^\top
\bF^{(0)}(\blambda)
\bB(\blambda,\bmu)
\bF^{(0)}(\bmu)
\bof_1(\bmu)
\\
= &
\int_{\IR^d}
\int_{\IR^d}
d \blambda 
d \bmu 
\bar{\bof_1}(\bmu)^\top
\bF^{(0)}(\bmu)
\bB(\bmu,\blambda)
\bF^{(0)}(\blambda)
\bof_2(\blambda)
\\
= &
( V \bof_2 , \bof_1 )_{\cW_{D}(\bF^{(0)})},
\end{align*}
where we have used $\bar{\bB}(\blambda , \bmu)^\top = \bB(\bmu,\blambda)$.

Let us now show that $V$ is an Hilbert--Schmidt operator. Let $(\bphi_k)_{k \in \IN}$ be an orthonormal basis of $\cW_{D}(\bF^{(0)})$. Let, for $n \in \mathbb{N}$, $\blambda , \bmu \in \IR^d$,
\[
\overline{\bB_n( \blambda , \bmu)}
=
\sum_{a=1}^n
\left(
\int_{\IR^d}
\bB( \blambda , \bz )
\bF^{(0)}(\bz)
\bphi_a(\bz)
d \bz
\right)
\bphi_a(\bmu)^\top.
\]
Let also $\overline{\bE_n( \blambda , \bmu)} = \overline{\bB_n( \blambda , \bmu)} - \overline{\bB( \blambda , \bmu)}$.
Then, we have 
\begin{align}
\trace 
\bigg(
\int_{\IR^d}&
\int_{\IR^d}
(\bF^{(0)})^{1/2}(\blambda)
\overline{\bE_n( \blambda , \bmu)}
\bF^{(0)}(\bmu)
\bE_n( \blambda , \bmu)^\top
(\bF^{(0)})^{1/2}(\blambda)
d \blambda 
d \bmu
\biggr)
\label{eq:the:trace}
\\
& \leq 
p
\int_{\IR^d}
d \blambda 
\| \bF^{(0)}(\blambda) \|
\int_{\IR^d}
d \bmu
\Bnorml
\overline{\bE_n( \blambda , \bmu)}
\bF^{(0)}(\bmu)
\bE_n( \blambda , \bmu)^\top
\Bnormr
\notag
\\
& \leq 
p
\sum_{a=1}^p
\int_{\IR^d}
d \blambda 
\| \bF^{(0)}(\blambda) \|
\int_{\IR^d}
d \bmu
\left(
\bfe_a^\top
\overline{\bE_n( \blambda , \bmu)}
\right)
\bF^{(0)}(\bmu)
\left(
\bfe_a^\top
\bE_n( \blambda , \bmu)
\right)^\top.
\label{eq:inner:most:int}
\end{align}
We note that $\bfe_a^\top
\overline{\bB_n( \blambda , \bmu)}$ is the orthogonal projection in $\cW_{D}(\bF^{(0)})$ of the row $a$ of $ \bmu \mapsto \overline{\bB(\blambda,\bmu)}$ on the linear space spanned by $\bphi_1^\top,\ldots,\bphi_n^\top$. For almost all $\blambda \in \IR^d$, the norm of this row in $\cW_{D}(\bF^{(0)})$ is finite from \eqref{eq:the:finite:integral}. Hence, for almost all $\blambda \in \IR^d$, the inner  most integral in \eqref{eq:inner:most:int} goes to zero as $n \to \infty$. Furthermore, this inner most integral is bounded by 
\[
\int_{\IR^d}
d \bmu
\left(
\bfe_a^\top
\overline{\bB( \blambda , \bmu)}
\right)
\bF^{(0)}(\bmu)
\left(
\bfe_a^\top
\bB( \blambda , \bmu)
\right)^\top,
\]
which satisfies
\[
\int_{\IR^d}
d \blambda 
\| \bF^{(0)}(\blambda) \|
\int_{\IR^d}
d \bmu
\left(
\bfe_a^\top
\overline{\bB( \blambda , \bmu)}
\right)
\bF^{(0)}(\bmu)
\left(
\bfe_a^\top
\bB( \blambda , \bmu)
\right)^\top
< \infty
\]
from \eqref{eq:the:finite:integral}. Hence, by the dominated convergence theorem, \eqref{eq:the:trace} goes to zero as $n \to \infty$. Now, consider the application 
\[
\A , \C
\mapsto
\trace 
\left(
\int_{\IR^d}
\int_{\IR^d}
(\bF^{(0)})^{1/2}(\blambda)
\A( \blambda , \bmu)
\bF^{(0)}(\bmu)
\overline{\C( \blambda , \bmu)}^\top
(\bF^{(0)})^{1/2}(\blambda)
d \blambda 
d \bmu
\right),
\]
for functions $\A , \C$ from $\IR^d \times \IR^d$ to $\IC^{p^2}$ satisfying \eqref{eq:the:finite:integral} with $\bB$ replaced by $\A$ or $\C$ there. One can check that this application is a scalar product. Hence, using the triangle inequality, it follows that 
\[
\trace 
\left(
\int_{\IR^d}
\int_{\IR^d}
(\bF^{(0)})^{1/2}(\blambda)
\overline{\bB_n( \blambda , \bmu)}
\bF^{(0)}(\bmu)
\bB_n( \blambda , \bmu)^\top
(\bF^{(0)})^{1/2}(\blambda)
d \blambda 
d \bmu
\right)
\]
is bounded as $n \to \infty$. This bounded quantity is equal to, using the orthogonality of $\bphi_1,\ldots,\bphi_n$,
\begin{align*}
\sum_{i,j=1}^n &
\trace 
\Bigg(
\int_{\IR^d}
\int_{\IR^d}
(\bF^{(0)})^{1/2}(\blambda)
\left(
\int_{\IR^d}
\bB( \blambda , \bz )
\bF^{(0)}(\bz)
\bphi_i(\bz)
d \bz
\right)
\bphi_i(\bmu)^\top
\bF^{(0)}(\bmu)
\\
& \quad
\overline{\bphi_j(\bmu)}
\overline{
\left(
\int_{\IR^d}
\bB( \blambda , \bz )
\bF^{(0)}(\bz)
\bphi_j(\bz)
 d \bz
\right)
}^\top
(\bF^{(0)})^{1/2}(\blambda)
d \blambda 
d \bmu
\Bigg)
\\
&= 
\sum_{i=1}^n
\trace 
\Bigg(
\int_{\IR^d}
(\bF^{(0)})^{1/2}(\blambda)
\left(
\int_{\IR^d}
\bB( \blambda , \bz )
\bF^{(0)}(\bz)
\bphi_i(\bz)
 d \bz
\right)
\\
&\quad
\overline{
\left(
\int_{\IR^d}
\bB( \blambda , \bz )
\bF^{(0)}(\bz)
\bphi_i(\bz)
d \bz
\right)
}^\top
(\bF^{(0)})^{1/2}(\blambda)
d \blambda 
\Bigg)
\\
& = 
\sum_{i=1}^n
\trace 
\Bigg(
\int_{\IR^d}
\overline{
\left(
\int_{\IR^d}
\bB( \blambda , \bz )
\bF^{(0)}(\bz)
\bphi_i(\bz)
 d \bz
\right)
}^\top
(\bF^{(0)})^{1/2}(\blambda)
\\
& \qquad
(\bF^{(0)})^{1/2}(\blambda)
\left(
\int_{\IR^d}
\bB( \blambda , \bz )
\bF^{(0)}(\bz)
\bphi_i(\bz) 
d \bz
\right)
d \blambda 
\Bigg)
\\
& = 
\sum_{i=1}^n
\int_{\IR^d}
\overline{
\left(
\int_{\IR^d}
\bB( \blambda , \bz )
\bF^{(0)}(\bz)
\bphi_i(\bz)
 d \bz
\right)
}^\top
\bF^{(0)}(\blambda)
\left(
\int_{\IR^d}
\bB( \blambda , \bz )
\bF^{(0)}(\bz)
\bphi_i(\bz) 
d \bz
\right)
d \blambda 
\\
 & = 
\sum_{i=1}^n
\| V \bphi_i \|_{\cW_{D}(\bF^{(0)})}^2.
\end{align*}
This implies that
\[
\sum_{i=1}^\infty
\| V \bphi_i \|_{\cW_{D}(\bF^{(0)})}^2
< \infty
\]
and thus $V$ is Hilbert--Schmidt.

Hence, there exists a sequence $(\bg_k)_{k \in \mathbb{N}}$ of eigenfunctions of $V$. For $k \in \IN$, we let $\bg_k = ( g_{k,1} , \ldots, g_{k,p} )^\top$ from $\IR^d$ to $\IC^p$ and we remark that we have $( \bg_k , \bg_j )_{\cW_{D}(\bF^{(0)})} = \delta_{k,j}$ for $k,j \in \IN$. We let $(\lambda_k)_{k \in \IN}$ be the corresponding sequence of eigenvalues of $V$, such that we have $V \bg_k  = \lambda_k \bg_k$ for $k \in \IN$ and $\sum_{k=1}^{\infty} \lambda_k^2 <  \infty$. 

Let $k, j \in \IN$ be fixed. By definition of $\cW_{D}( \bF^{(0)} )$, there exists a sequence $( \bphi_{k,n} )_{n \in \IN}$ such that $\bphi_{k,n} : D \to \IC^p$ for $n \in \IN$ and such that, with $\bu_{k,n} = ( u_{k,n,1} , \ldots, u_{k,n,p})^\top$ from $\IR^d$ to $\IC^p$ defined by
\[
u_{k,n,i}( \blambda ) = \int_{D}  e^{- \i \blambda^\top \t} \phi_{k,n,i}( \t) d \t,
\]
for $i=1,\ldots,p$ and $\blambda \in \IR^d$, we have $\bu_{k,n} \to \bg_{k}$ in $\cW_{D}( \bF^{(0)} )$. There also exists a sequence $( \bphi_{j,n} )_{n \in \IN}$ that is defined similarly for $\bg_j$ instead of $\bg_k$.

We have, using \eqref{eq:assumption:Rdtwo:minus:Rone},
\begin{align} 
& 
\int_{D} \int_{D}
\bar{\bphi}_{k,n}(\t)^\top
\bR^{(1)}(\t-\s)
\bphi_{j,n}(\s)
d \t
d \s
-
\int_{D} \int_{D}
\bar{\bphi}_{k,n}(\t)^\top
\bR^{(0)}(\t-\s)
\bphi_{j,n}(\s)
d \t
d \s
\label{eq:double:int:Rtwo:minus:Rone:one}
\\
&\quad =
\int_{D} \int_{D} \int_{\IR^d} \int_{\IR^d}
e^{ - \i \blambda^\top \t + \i \bmu^\top \s }
\bar{\bphi}_{k,n}(\t)^\top
\bF^{(0)}( \blambda )
\bB( \blambda , \bmu )
\bF^{(0)}(\bmu)
\bphi_{j,n}(\s)
d \t
d \s
d\blambda
d\bmu
\nonumber
\\
& \quad = 
\int_{\IR^d} \int_{\IR^d}
\bar{\bu}_{k,n}(\blambda)^\top
\bF^{(0)}( \blambda )
\bB( \blambda , \bmu )
\bF^{(0)}(\bmu)
\bu_{j,n}(\bmu)
d\blambda d\bmu.
\label{eq:double:int:Rtwo:minus:Rone:two}
\end{align}
Let us find the limit of the two terms in \eqref{eq:double:int:Rtwo:minus:Rone:one} as $n \to \infty$. We have
\begin{align*}
\int_{D} \int_{D}
\bar{\bphi}_{k,n}(\t)^\top
\bR^{(1)}(\t-\s)
\bphi_{j,n}(\s)
d \t
d \s
& = 
\int_{D} \int_{D} \int_{\IR^d}
e^{\i \blambda^\top (\t-\s)}
\bar{\bphi}_{k,n}(\t)^\top
\bF^{(1)}(\blambda)
\bphi_{j,n}(\s)
d \t
d \s
d\blambda
\\
& =
\int_{\IR^d}
\bar{\bu}_{k,n}(\blambda)^\top
\bF^{(1)}(\blambda)
\bu_{k,n}(\blambda)
d\blambda \\
& \to_{n \to \infty}
\int_{\IR^d}
\bar{\bg}_{k}(\blambda)^\top
\bF^{(1)}(\blambda)
\bg_{j}(\blambda)
d\blambda
\end{align*}
using Lemma \ref{lemma:fdeux:smaller:constant:Fun} and the triangle inequality.
Similarly we have
\[
\int_{D} \int_{D}
\bar{\bphi}_{k,n}(\t)^\top
\bR^{(0)}(\t-\s)
\bphi_{j,n}(\s)
d \t
d \s
\to_{n \to \infty}
\int_{\IR^d}
\bar{\bg}_{k}(\blambda)^\top
\bF^{(0)}(\blambda)
\bg_{j}(\blambda)
d\blambda.
\]
Let us find the limit of $\eqref{eq:double:int:Rtwo:minus:Rone:two}$ as $n \to \infty$.
We have, with a finite constant $c$,
\begin{align*}
&
\int_{\IR^d} \int_{\IR^d}
\bar{\bu}_{k,n}(\blambda)^\top
\bF^{(0)}( \blambda )
\bB( \blambda , \bmu )
\bF^{(0)}(\bmu)
\bu_{j,n}(\bmu)
d\blambda d\bmu 
\\
& ~ ~
-
\int_{\IR^d} \int_{\IR^d}
\bar{\bg}_{k}(\blambda)^\top
\bF^{(0)}( \blambda )
\bB( \blambda , \bmu )
\bF^{(0)}(\bmu)
\bu_{j,n}(\bmu)
d\blambda d\bmu 
\\
& = 
\int_{\IR^d} \int_{\IR^d}
\left[\bar{\bu}_{k,n}(\blambda) - \bar{\bg}_{k}(\blambda) \right]^\top
\bF^{(0)}( \blambda )
\bB( \blambda , \bmu )
\bF^{(0)}(\bmu)
\bu_{j,n}(\bmu)
d\blambda d\bmu \hspace*{2.8cm}
\end{align*}
\begin{align*}
& \leq 
c
\sqrt{
\int_{\IR^d} \int_{\IR^d}
\|\bF^{(0)}( \blambda )\|
\|\bB( \blambda , \bmu )\|^2
\|\bF^{(0)}(\bmu)\|
d\blambda d\bmu
}
\\
& ~ ~ ~ 
 \sqrt{
\int_{\IR^d} 
\int_{\IR^d} 
\|\bF^{(0)}( \blambda )\|
\| \bu_{k,n}(\blambda) - \bg_{k}(\blambda) \|^2
\|\bF^{(0)}( \bmu )\|
\|   \bu_{j,n}(\bmu) \|^2
d \bmu 
d\blambda 
}
\\
& =
c
\sqrt{
\int_{\IR^d} \int_{\IR^d}
\|\bF^{(0)}( \blambda )\|
\|\bB( \blambda , \bmu )\|^2
\|\bF^{(0)}(\bmu)\|
d\blambda d\bmu
}
 \sqrt{
\int_{\IR^d} 
\|\bF^{(0)}( \blambda )\|
\| \bu_{k,n}(\blambda) - \bg_{k}(\blambda) \|^2
d\blambda 
}
\\
&
~ ~ ~
 \sqrt{
\int_{\IR^d} 
\|\bF^{(0)}( \bmu )\|
\|   \bu_{j,n}(\bmu) \|^2
d \bmu 
}
\\
&
\to_{n \to \infty} 0,
\end{align*}
where we have used the Cauchy--Schwarz inequality and the fact that $\bu_{k,n}$ converges to $\bg_k$ in $\cW_{D}(\bF^{(0)})$, together with Condition \ref{cond:phi:zero} and \eqref{eq:the:finite:integral}.

We show similarly
\begin{align*}
&
\int_{\IR^d} \int_{\IR^d}
\bar{\bg}_{k}(\blambda)^\top
\bF^{(0)}( \blambda )
\bB( \blambda , \bmu )
\bF^{(0)}(\bmu)
\bu_{j,n}(\bmu)
d\blambda d\bmu 
\\
& ~ ~
-
\int_{\IR^d} \int_{\IR^d}
\bar{\bg}_{k}(\blambda)^\top
\bF^{(0)}( \blambda )
\bB( \blambda , \bmu )
\bF^{(0)}(\bmu)
\bg_{j}(\bmu)
d\blambda d\bmu \quad
 \to_{n \to \infty} 0.
\end{align*}
Hence, \eqref{eq:double:int:Rtwo:minus:Rone:two} converges to
\[
\int_{\IR^d} \int_{\IR^d}
\bar{\bg}_{k}(\blambda)^\top
\bF^{(0)}( \blambda )
\bB( \blambda , \bmu )
\bF^{(0)}(\bmu)
\bg_{j}(\bmu)
d\blambda d\bmu
\]
as $n \to \infty$. Thus, from \eqref{eq:double:int:Rtwo:minus:Rone:one} and \eqref{eq:double:int:Rtwo:minus:Rone:two}, we obtain
\begin{equation} \label{eq:gkgjF2:minus:gKgjF1}
(\bg_k , \bg_j)_{\cW_{D}(\bF^{(1)})}
-
(\bg_k , \bg_j)_{\cW_{D}(\bF^{(0)})}
=
\lambda_k \delta_{jk}.
\end{equation}
Let $A$ be the operator from $\cW_{D} (\bF^{(0)})$ to $\cW_{D} (\bF^{(1)})$ defined by
\[
A \bphi = \bphi ~ ~ \in \cW_{D} (\bF^{(1)}).
\]
From Lemma \ref{lemma:fdeux:smaller:constant:Fun}, $A$
is well-defined and bounded. Let $A^\star$ from  $\cW_{D} (\bF^{(1)})$ to $\cW_{D} (\bF^{(0)})$ be the adjoint operator to $A$.  
We remark that we have for $\bphi_1,\bphi_2 \in \cW_{D} (\bF^{(0)})$,
\begin{equation} \label{eq:AstarA}
( A^\star A \bphi_1 , \bphi_2)_{\cW_{D} (\bF^{(0)})}
=
(  A \bphi_1 , A \bphi_2)_{\cW_{D} (\bF^{(1)})}
=
( \bphi_1 , \bphi_2)_{\cW_{D} (\bF^{(1)})}.
\end{equation}
Consider the operator $\Delta = I - A^\star A $ from $\cW_{D} (\bF^{(0)})$ to $\cW_{D} (\bF^{(0)})$ where $I$ is the identity operator. Then from \eqref{eq:AstarA} we have, for $\bphi_1,\bphi_2 \in \cW_{D} (\bF^{(0)})$,
\begin{equation} \label{eq:Delta:phi:scalaire:phi}
( \Delta \bphi_1 , \bphi_2)_{\cW_{D} (\bF^{(0)})}
=
( \bphi_1 , \bphi_2)_{\cW_{D} (\bF^{(0)})}
-
( \bphi_1 , \bphi_2)_{\cW_{D} (\bF^{(1)})}.
\end{equation}
Let now $\bphi_1,\ldots,\bphi_n$ be any orthonormal functions in $\cW_{D} (\bF^{(0)})$.
From \eqref{eq:gkgjF2:minus:gKgjF1} and \eqref{eq:Delta:phi:scalaire:phi}, we have, using Bessel's inequality and Parseval's identity,
\begin{align*}
+\infty
&
>
\sum_{k=1}^{\infty} \lambda_k^2
= 
\sum_{k,j=1}^{\infty}
\left(
( \Delta \bg_k , \bg_j)_{\cW_{D} (\bF^{(0)})}
\right)^2
=
\sum_{k=1}^{\infty}
\left(
\| \Delta \bg_k \|_{\cW_{D} (\bF^{(0)})}
\right)^2
\\
& \geq 
\sum_{k=1}^{\infty}
\sum_{j=1}^{n}
\left(
( \Delta \bg_k , \bphi_j)_{\cW_{D} (\bF^{(0)})}
\right)^2
= 
\sum_{k=1}^{\infty}
\sum_{j=1}^{n}
\left(
(  \bg_k , \Delta \bphi_j)_{\cW_{D} (\bF^{(0)})}
\right)^2
\\
& =
 \sum_{j=1}^{n}
\left(
\| \Delta  \bphi_j \|_{\cW_{D} (\bF^{(0)})}
\right)^2
\geq 
 \sum_{k,j=1}^{n}
\left(
( \bphi_k ,  \Delta  \bphi_j )_{\cW_{D} (\bF^{(0)})}
\right)^2.
\end{align*}
Hence, from \eqref{eq:Delta:phi:scalaire:phi} we have
\begin{equation} \label{eq:bound:diff:scalar:product:phi}
\sum_{k,j=1}^{n}
\left(
( \bphi_k ,   \bphi_j )_{\cW_{D} (\bF^{(0)})}
-
( \bphi_k ,   \bphi_j )_{\cW_{D} (\bF^{(1)})}
\right)^2
\leq 
\sum_{k=1}^{\infty} \lambda_k^2
< + \infty.
\end{equation}
Let for $a=0,1$, $\cB^{(a)}$ be the operator on $L^{2,p}_{D}$ defined by $ \cB^{(a)}(\bof)(\t)  = \int_{D}  \bR^{(a)}(\t - \bu) \bof(\bu) d \bu$.
Let $(\bh_{k})_{k \in \mathbb{N}}$ be the orthonormal basis of $ L^{2,p}_{D}$ composed of the eigenfunctions of $\cB^{(0)}$, with eigenvalues $(\rho_k)_{k \in \mathbb{N}}$ (the existence can be proved as for the proof that $V$ is Hilbert-Schmidt above). Let, for $k \in \mathbb{N}$, $\bphi_k = (\phi_{k,1},\ldots,\phi_{k,p})$ with $\phi_{k,i}(\blambda) =  \int_{D} h_{k,i}(\t) e^{ - \i \blambda^\top \t} d\t  $ for $i=1,\ldots,p$. Then $\bphi_k \in \cW_{D}(\bF^{(0)})$ for $k \in \mathbb{N}$ and we have, for $k,j \in \mathbb{N}$,
\begin{align} \label{eq:from:WF2:to:B2}
(\bphi_k , \bphi_j)_{\cW_{D}(\bF^{(1)})}
& =
\int_{D}
\int_{D}
\int_{\IR^d}
\bar{\bh}_{k}(\t)^\top e^{  \i \blambda^\top \t}
\bF^{(1)}(\blambda)
\bh_{j}(\bu) e^{- \i \blambda^\top \bu}
d \t d \bu d\blambda
\notag
\\
& = \int_{D}
\int_{D}
\bar{\bh}_{k}(\t)^\top
\bR^{(1)}(\t-\bu)
\bh_j(\bu).
\end{align}
Similarly,
\begin{align} \label{eq:from:WF1:to:B1}
(\bphi_k , \bphi_j)_{\cW_{D}(\bF^{(0)})}
& =
 \int_{D}
\int_{D}
\bar{\bh}_{k}(\t)^\top
\bR^{(0)}(\t-\bu)
\bh_j(\bu).
\end{align}

In particular $(\bphi_k , \bphi_j)_{\cW_{D}(\bF^{(0)})} = \delta_{k,j} \rho_k$.
Since the $(\bphi_k)_{k \in \mathbb{N}}$ are not almost surely equal to zero, it follows that $\rho_k >0$ for $k \in \mathbb{N}$ from Condition \ref{cond:phi:zero}.

Hence,  from \eqref{eq:bound:diff:scalar:product:phi}, \eqref{eq:from:WF2:to:B2} and \eqref{eq:from:WF1:to:B1}, we obtain, with $\tilde{\bphi}_k = \bphi_k / \rho_k^{1/2}$,
\begin{align*}
\infty  
> \sum_{k=1}^{\infty} \lambda_k^2
& \geq 
\sum_{k,j=1}^{n}
\left(
( \tilde{\bphi}_k ,   \tilde{\bphi}_j )_{\cW_{D} (\bF^{(0)})}
-
( \tilde{\bphi}_k ,   \tilde{\bphi}_j )_{\cW_{D} (\bF^{(1)})}
\right)^2
\\
& = 
\sum_{k,j=1}^{n}
\left(
\frac{1}{\sqrt{\rho_k}}
\frac{1}{\sqrt{\rho_j}}
( \bphi_k ,   \bphi_j )_{\cW_{D} (\bF^{(0)})}
-
\frac{1}{\sqrt{\rho_k}}
\frac{1}{\sqrt{\rho_j}}
( \bphi_k ,   \bphi_j )_{\cW_{D} (\bF^{(1)})}
\right)^2
\\
& =
 \sum_{k,j=1}^{n}
\left(
\frac{1}{\sqrt{\rho_k}}
\frac{1}{\sqrt{\rho_j}}
 (   \bh_k , \cB^{(0)} \bh_j   )_{L^{2,p}_{D}} 
-
\frac{1}{\sqrt{\rho_k}}
\frac{1}{\sqrt{\rho_j}}
 (   \bh_k , \cB^{(1)} \bh_j   )_{L^{2,p}_{D}} 
\right)^2.
\end{align*}
Let now $\bof \in L^{2,p}_{D}$. We can write $\bof = \sum_{i=1}^\infty \alpha_i \bh_i$ with $\sum_{i=1}^\infty \alpha_i^2  < \infty$. If $(\bof , \cB^{(0)} \bof)_{L^{2,p}_{D}} = 1$, we have $\sum_{i=1}^\infty \alpha_i^2 \rho_i = 1$ and
\begin{align*}
(\bof , &\cB^{(1)} \bof)_{L^{2,p}_{D}}
-
1
 \leq
\left|  
(\bof , \cB^{(0)} \bof)_{L^{2,p}_{D}}
-
(\bof , \cB^{(1)} \bof)_{L^{2,p}_{D}}
\right|
\\
& \leq 
\sum_{i,j=1}^\infty
|\alpha_i \alpha _j  |
\left|
(\bh_i , \cB^{(0)} \bh_j)_{L^{2,p}_{D}}
-
(\bh_i , \cB^{(1)} \bh_j)_{L^{2,p}_{D}}
\right|
\\
&
=
\sum_{i,j=1}^\infty
|\alpha_i \sqrt{\rho_i} \alpha _j  \sqrt{\rho_j} |
\left|
\frac{1}{\sqrt{\rho_i}}
\frac{1}{\sqrt{\rho_j}}
(\bh_i , \cB^{(0)} \bh_j)_{L^{2,p}_{D}}
-
\frac{1}{\sqrt{\rho_i}}
\frac{1}{\sqrt{\rho_j}}
(\bh_i , \cB^{(1)} \bh_j)_{L^{2,p}_{D}}
\right|
\\
& \leq 
\sqrt{ \sum_{i,j=1}^\infty
\alpha_i^2 \rho_i \alpha _j^2 \rho_j  }
\sqrt{ \sum_{i=i,j}^\infty
\left(
\frac{1}{\sqrt{\rho_i}}
\frac{1}{\sqrt{\rho_j}}
(\bh_i , \cB^{(0)} \bh_j)_{L^{2,p}_{D}}
-
\frac{1}{\sqrt{\rho_i}}
\frac{1}{\sqrt{\rho_j}}
(\bh_j , \cB^{(1)} \bh_j)_{L^{2,p}_{D}}
\right)^2
}
\\
& \leq 
\sum_{k=1}^{\infty} \lambda_k^2.
\end{align*}

Hence, with $(\cB^{(a)})^{1/2}$ the unique operator square root of $\cB^{(a)}$ for $a=0,1$, there exists a finite constant $c$ such that for any $\bof \in L^{2,p}_{D}$, $((\cB^{(1)})^{1/2} \bof , (\cB^{(1)})^{1/2} \bof)_{L^{2,p}_{D}} \leq c ((\cB^{(0)})^{1/2} \bof , (\cB^{(0)})^{1/2} \bof)_{L^{2,p}_{D}}$. By a similar reasoning, there
exists a finite constant $c'$ such that for any $\bof \in L^{2,p}_{D}$, $((\cB^{(0)})^{1/2} \bof , (\cB^{(0)})^{1/2} \bof)_{L^{2,p}_{D}} \leq c' ((\cB^{(1)})^{1/2} \bof , (\cB^{(1)})^{1/2} \bof)_{L^{2,p}_{D}}$. Hence from Proposition B.1 in \cite{da2014stochastic}, the image spaces of $(\cB^{(0)})^{1/2}$ and $(\cB^{(1)})^{1/2}$ are the same.

Let $(\cB^{(0)})^{-1/2}$ be the pseudo inverse of $(\cB^{(0)})^{1/2}$ (see \cite{da2014stochastic}). Let also $\bpsi_k  = (\cB^{(0)})^{1/2}  \bh_k / \rho_k^{1/2}$. Then $(\bpsi_k)_{k \in \mathbb{N}}$ is an orthonormal basis of the image of $\cB^{(0)}$ in $L^{2,p}_{D}$. We obtain, recalling that the $\tilde{\bphi}_k $ for $k=1,\ldots,p$ are orthonormal in $\cW_{D} (\bF^{(0)})$, from \eqref{eq:bound:diff:scalar:product:phi}, \eqref{eq:from:WF2:to:B2} and \eqref{eq:from:WF1:to:B1}, 
\begin{align*}
\infty  
> \sum_{k=1}^{\infty} \lambda_k^2
& \geq 
\sum_{k,j=1}^{n}
\left(
( \tilde{\bphi}_k ,   \tilde{\bphi}_j )_{\cW_{D} (\bF^{(0)})}
-
( \tilde{\bphi}_k ,   \tilde{\bphi}_j )_{\cW_{D} (\bF^{(1)})}
\right)^2
\\
& = 
\sum_{k,j=1}^{n}
\left(
\frac{1}{\sqrt{\rho_k}}
\frac{1}{\sqrt{\rho_j}}
( \bphi_k ,   \bphi_j )_{\cW_{D} (\bF^{(0)})}
-
\frac{1}{\sqrt{\rho_k}}
\frac{1}{\sqrt{\rho_j}}
( \bphi_k ,   \bphi_j )_{\cW_{D} (\bF^{(1)})}
\right)^2
\\
& = 
\sum_{k,j=1}^{n}
\left(
\frac{1}{\sqrt{\rho_k}}
\frac{1}{\sqrt{\rho_j}}
( \bh_k ,  \cB^{(0)} \bh_j )_{L^{2,p}_{D}}
-
\frac{1}{\sqrt{\rho_k}}
\frac{1}{\sqrt{\rho_j}}
( \bh_k , \cB^{(1)}  \bh_j )_{L^{2,p}_{D}}
\right)^2
\\
& = \sum_{k,j=1}^{n}
\left( (   \bpsi_k , \bpsi_j   )_{L^{2,p}_{D}} 
-
 ( (\cB^{(0)})^{-1/2}  \bpsi_k , \cB^{(1)} (\cB^{(0)})^{-1/2} \bpsi_j )_{L^{2,p}_{D}} 
\right)^2
\\ 
& = 
\sum_{k,j=1}^{n}
\left( (   \bpsi_k , [I - (\cB^{(0)})^{-1/2} \cB^{(1)} (\cB^{(0)})^{-1/2}] \bpsi_j   )_{L^{2,p}_{D}}  
\right)^2.
\end{align*}

Hence, with the orthonormal basis $(\bpsi_k)_{k \in \IN}$ of the image of $\cB^{(0)}$ in $L^{2,p}_{D}$, we have
\[
\sum_{k,j=1}^{\infty}
\left( (   \bpsi_k , [I - (\cB^{(0)})^{-1/2} \cB^{(1)} (\cB^{(0)})^{-1/2}] \bpsi_j   )_{L^{2,p}_{D}}  
\right)^2
< \infty.
\]
Hence, $[I - (\cB^{(0)})^{-1/2} \cB^{(1)} (\cB^{(0)})^{-1/2}]$ is a Hilbert--Schmidt operator from the image of $\cB^{(0)}$ to $L^{2,p}_{D}$. Since we have seen also that the images of $(\cB^{(0)})^{1/2}$ and $(\cB^{(1)})^{1/2}$ are the same, from Theorem 2.25 in \cite{da2014stochastic} (see also Chapter 1 of \cite{maniglia2004gaussian}), $P(\bR^{(0)})\equiv P(\bR^{(1)})$ on the paths of $\bZ$. 
\end{proof}

\begin{proof}[{\bf Proof of Theorem \ref{theorem:sufficient:integral:normalized:difference:small}}] 

In this proof, it is convenient to consider two real-valued stationary $p$-variate Gaussian random fields $\{\bZ^{(0)}(\s) = (Z^{(0)}_{1}(\s) ,\ldots, Z^{(0)}_{p}(\s))^\top,\linebreak[1] \ss \in D \}$ and $\{\bZ^{(1)}(\s) = (Z^{(1)}_{1}(\s) , \ldots, Z^{(1)}_{p}(\s))^\top , \ss \in D \}$, where $\bZ^{(0)}$ and $\bZ^{(1)}$ have continuous sample paths, and where, for $a=0,1$, $\bZ^{(a)}$ has zero mean and matrix covariance function
 $\bR^{(a)} = [R^{(a)}_{ij}]_{i,j=1}^{p}$.

Let us first assume that for all $\blambda \in \IR^d$, $\bF^{(1)}(\blambda) \geq \bF^{(0)}(\blambda)$. Let, for $\blambda \in \IR^d$, with $c_1$ as in Condition \ref{cond:phi:zero},
\begin{align*}
\tilde{\bF}{}^{(0)}(\blambda)
& =
c_1 \gamma(\blambda)^2 \I_p ,
\\
\tilde{\bF}{}^{(1)}(\blambda)
& =
c_1 \gamma(\blambda)^2  \I_p
+ \bF^{(1)}(\blambda) - \bF^{(0)}(\blambda), \qquad \text{and}
\\
\tilde{\bF}{}^{(2)}(\blambda)
& =
\bF^{(0)}(\blambda) -
c_1 \gamma(\blambda)^2 \I_p.
\end{align*}
Let $\tilde{\bZ}{}^{(0)}$, $\tilde{\bZ}{}^{(1)}$ and $\tilde{\bZ}{}^{(2)}$ be three independent $p$-variate Gaussian processes with mean function zero and respective matrix spectral densities $\tilde{\bF}{}^{(0)}$, $\tilde{\bF}{}^{(1)}$ and $\tilde{\bF}{}^{(2)}$.
Then, in distribution, for $a=0,1$, $\bZ^{(a)} = \tilde{\bZ}{}^{(a)} + \tilde{\bZ}{}^{(2)} $. Hence, in order to prove the theorem, it is sufficient to show that the Gaussian measures given by $\tilde{\bZ}{}^{(0)}$ and $\tilde{\bZ}{}^{(1)}$ are equivalent. Let us do this.

Let $(\tilde{\bg}_k)_{k \in \IN}$ be an orthonormal basis in $\cW_{D}(\tilde{\bF}{}^{(0)})$. Let $n \in \IN$. Since $\cW_{D}$ is dense everywhere in $\cW_{D}(\tilde{\bF}{}^{(0)})$, from Lemma \ref{lem:from:tilde:g:to:g}, we can find $\bg_1,\ldots,\bg_n$ in $\cW_{D}$ for which $(\bg_k,\bg_l)_{\cW_{D}(\tilde{\bF}{}^{(0)})} = \delta_{k,l}$ for $k,l=1,\ldots,n$ and
\begin{equation} \label{eq:from:tilde:g:to:g}
\sum_{k=1}^{n}
\left[
\|\tilde{\bg}_k\|^2_{\cW_{D}(\tilde{\bF}{}^{(1)})}
-
\|\tilde{\bg}_k\|^2_{\cW_{D}(\tilde{\bF}{}^{(0)})}
\right]^2
\leq 
1+
\sum_{k=1}^{n}
\left[
\|\bg_k\|^2_{\cW_{D}(\tilde{\bF}{}^{(1)})}
-
\|\bg_k\|^2_{\cW_{D}(\tilde{\bF}{}^{(0)})}
\right]^2.
\end{equation}

We have, using Cauchy--Schwarz inequality,
\begin{align*}
\sum_{k=1}^{n}
\Bigl[
\|\bg_k & \|^2_{\cW_{D}(\tilde{\bF}{}^{(1)})}
-
\|\bg_k\|^2_{\cW_{D}(\tilde{\bF}{}^{(0)})}
\Bigr]^2
 =
\sum_{k=1}^{n}
\left[
\int_{\IR^d}
\bar{\bg}_k(\blambda)^\top
\left(  \tilde{\bF}{}^{(1)}(\blambda) - \tilde{\bF}{}^{(0)}(\blambda) \right)
\bg_k(\blambda)
d \blambda
\right]^2
\\
& \leq
\sum_{k=1}^{n}
\left[
\int_{\IR^d}
\| \bg_k(\blambda)\|
\gamma(\blambda)
\bnorml  \tilde{\bF}{}^{(1)}(\blambda) - \tilde{\bF}{}^{(0)}(\blambda) \bnormr
\frac{1}{\gamma(\blambda)}
\| \bg_k(\blambda)\|
d \blambda
\right]^2
\end{align*}
\begin{align*}
& \leq 
\sum_{k=1}^{n}
\left(
\int_{\IR^d}
\| \bg_k(\blambda)\|^2
\gamma(\blambda)^2
d \blambda
\right)
\left(
\int_{\IR^d}
\bnorml  \tilde{\bF}{}^{(1)}(\blambda) - \tilde{\bF}{}^{(0)}(\blambda) \bnormr^2
\frac{1}{\gamma(\blambda)^2}
\| \bg_k(\blambda)\|^2
d \blambda
\right)
\\
& \leq 
\frac{p2^dT^d}{c_1^3 (2 \pi)^d}
\int_{\IR^d}
\bnorml  \tilde{\bF}{}^{(1)}(\blambda) - \tilde{\bF}{}^{(0)}(\blambda) \bnormr^2
\frac{1}{\gamma(\blambda)^4}
d \blambda,
\end{align*}
from Lemma \ref{lemma:sum:gk^2}, Condition \ref{cond:phi:zero}, and since $\| \bg_k \|_{\cW_{D}(\tilde{\bF}{}^{(0)})} = 1$. The above display is finite and does not depend on $n$. We thus have
\[
\sum_{k=1}^{\infty}
\left[
\|\tilde{\bg}_k\|^2_{\cW_{D}(\tilde{\bF}{}^{(1)})}
-
\|\tilde{\bg}_k\|^2_{\cW_{D}(\tilde{\bF}{}^{(0)})}
\right]^2
< \infty.
\]

Let us define $V$ as a symmetric operator on $\cW_{D}( \tilde{\bF}{}^{(0)} )$ defined by $(V \bg , \bh)_{\cW_{D}(\tilde{\bF}{}^{(0)})} = \int_{\IR^d} \bar{\bg}(\blambda)^\top \tilde{\bF}{}^{(1)}(\blambda) \bh(\blambda) d \blambda  $ (its existence follows from Riesz representation theorem). 
Hence, $\sum_{k=1}^{\infty} [ ( \tilde{\bg}_k , (V-I) \tilde{\bg}_k)_{\cW_{D}( \tilde{\bF}{}^{(0)} )} ]^2< \infty$.
Recall that $\tilde{\bF}{}^{(1)} \geq \tilde{\bF}{}^{(0)}$ so $(V-I)^{1/2}$ is well-defined and compact, so from the spectral theorem, there exists a sequence of eigenfunctions  $(\bg_k)_{k \in \IN}$  of $(V - I)^{1/2}$ with the corresponding eigenvalues $( \sqrt{\alpha_k})_{k \in \IN}$. Furthermore, we have $\sum_{k=1}^{\infty}  (\sqrt{\alpha_k})^4 < \infty$. Hence, $(\bg_k)_{k \in \IN}$ is a sequence of eigenfunctions of $V-I$ with eigenvalues $(\alpha_k)_{k \in \IN}$ with $\sum_{k=1}^{\infty} \alpha_k^2 < \infty$ and so $V-I$ is Hilbert--Schmidt.

We have, for $r,q \in \IN$, with constants $c, c'$ and using the equivalence of matrix norms and that $(\bg_k)_{k \in \IN}$ is an orthonormal basis of $\cW_{D}(\tilde{\bF}{}^{(0)})$,
\begin{flalign*}
& \int_{\IR^d} 
\int_{\IR^d}
\Bnorml
\sum_{k=r}^{r+q}
\alpha_k \bg_k(\blambda)  \bar{\bg}_k( \bmu )^\top
\Bnormr^2
\| \tilde{\bF}{}^{(0)}(\blambda) \|\,
 \| \tilde{\bF}{}^{(0)}(\bmu) \| 
 d \blambda d \bmu
 & \\
 & \leq 
 c
 \int_{\IR^d} 
\int_{\IR^d}
\sum_{k=r}^{r+q}
\Bnorml
\tilde{\bF}{}^{(0)}(\blambda)^{1/2}
\left(
\alpha_k \bg_k(\blambda)  \bar{\bg}_k( \bmu )^\top
\right)
\tilde{\bF}{}^{(0)}(\bmu)^{1/2}
\Bnormr^2
 d \blambda d \bmu 
 &
 \\
 & \leq 
 c'
 \int_{\IR^d} 
\int_{\IR^d}
\trace \left(
\biggl[\;
\sum_{k=r}^{r+q}
\alpha_k
\tilde{\bF}{}^{(0)}(\blambda)^{1/2}
 \bg_k(\blambda)  \bar{\bg}_k( \bmu )^\top
\tilde{\bF}{}^{(0)}(\bmu)^{1/2}
\biggr]
\right.
\\
& \quad
\left.
\biggl[\;
\sum_{\ell=r}^{r+q}
\alpha_\ell
\tilde{\bF}{}^{(0)}(\bmu)^{1/2}
\bg_\ell(\bmu) \bar{\bg}_\ell( \blambda )^\top
\tilde{\bF}{}^{(0)}(\blambda)^{1/2}
\biggr]
\right)
 d \blambda d \bmu 
 &
 \\
 & =
 c' \sum_{k,\ell = r}^{r+q}
 \alpha_k \alpha_{\ell}
  \int_{\IR^d} 
\int_{\IR^d}
\trace 
\left(
\tilde{\bF}{}^{(0)}(\blambda)^{1/2}
 \bg_k(\blambda)  \bar{\bg}_k( \bmu )^\top
\tilde{\bF}{}^{(0)}(\bmu)^{1/2}
\tilde{\bF}{}^{(0)}(\bmu)^{1/2}
 \bg_\ell(\bmu) \bar{\bg}_\ell( \blambda )^\top
\tilde{\bF}{}^{(0)}(\blambda)^{1/2}
\right)
d \blambda d \bmu
\\
& =
 c' \sum_{k,\ell = r}^{r+q}
 \alpha_k \alpha_{\ell}
  \int_{\IR^d} 
\int_{\IR^d}
\bar{\bg}_\ell( \blambda )^\top
\tilde{\bF}{}^{(0)}(\blambda)
 \bg_k(\blambda)
   \bar{\bg}_k( \bmu )^\top
\tilde{\bF}{}^{(0)}(\bmu)
 \bg_\ell(\bmu) 
d \blambda d \bmu
\\
& =
\sum_{k=r}^{r+q} \alpha_k^2,
\end{flalign*}
since $(\bg_k)_{k \in \IN}$ is an orthonormal basis of $\cW_{D}(\tilde{\bF}{}^{(0)})$. Since $(\alpha_k)_{k \in \IN}$ is square summable, it follows that 
\[
\bB(\bmu , \blambda)^\top
=
\sum_{k=1}^{\infty}
\alpha_k
\bg_k(\blambda)
\bar{\bg}_k(\bmu)^\top
\]
is well-defined as a limit of Cauchy sequence and we have
\[
\int_{\IR^d} 
\int_{\IR^d}
\bnorml
\bB( \blambda , \bmu )
\bnormr^2
\| \tilde{\bF}{}^{(0)}(\blambda) \|\,
 \| \tilde{\bF}{}^{(0)}(\bmu) \| 
 d \blambda d \bmu < \infty.
\]
Let $\tilde{\bR}{}^{(j)}$ be the matrix covariance function of $\tilde{\bZ}_j$ for $j=0,1$. For $i=1,\ldots,p$, let
$\bpsi_{i,\r}(\blambda) = e^{ - \i \blambda^\top \r} \bfe_i$
for $\r \in \IR^d$. We have, for $i,j=1,\ldots,p$,
\begin{align*}
\bigl[
\tilde{\bR}{}^{(1)}( \t -\s )
&-
\tilde{\bR}{}^{(0)}(\t-\s)
\bigr]_{i,j}
=
\bfe_i^\top
\Bigl(
\int_{ \IR^d }
e^{ \i (\t-\s)^\top \blambda }
\tilde{\bF}{}^{(1)}( \blambda )
d \blambda 
\Bigr)
\bfe_j
-
\bfe_i^\top
\Bigl(
\int_{ \IR^d }
e^{ \i (\t-\s)^\top \blambda }
\tilde{\bF}{}^{(0)}( \blambda )
d \blambda 
\Bigr)
\bfe_j
\\
& = \int_{\IR^d}
\bar{\bpsi}_{i,\t}(\blambda)^\top
\tilde{\bF}{}^{(1)}(\blambda)
\bpsi_{j,\s}(\blambda)
d \blambda
-
\int_{\IR^d}
\bar{\bpsi}_{i,\t}(\blambda)^\top
\tilde{\bF}{}^{(0)}(\blambda)
\bpsi_{j,\s}(\blambda)
d \blambda
\\
& =
\left( 
(V-I) \bpsi_{i,\t}
,
\bpsi_{j,\s}
\right)_{\cW_{D}(\tilde{\bF}{}^{(0)})}
\\
& =
\sum_{k=1}^{\infty}
\alpha_k
\left( 
\bpsi_{i,\t}
,
\bg_k
\right)_{\cW_{D}(\tilde{\bF}{}^{(0)})}
\left( 
\bg_k
,
\bpsi_{j,\s}
\right)_{\cW_{D}(\tilde{\bF}{}^{(0)})}
\\
& =
\sum_{k=1}^{\infty}
\alpha_k
\int_{\IR^d}
\int_{\IR^d}
\bfe_i^\top e^{  \i \blambda^\top \t }
\tilde{\bF}{}^{(0)}(\blambda)
\bg_k(\blambda)
\bar{\bg}_k(\bmu)^\top 
\tilde{\bF}{}^{(0)}(\bmu) 
e^{ - \i \bmu^\top \s }
\bfe_j
d \blambda 
d \bmu
\\
& =
\bfe_i^\top
\Bigl(
\int_{\IR^d}
\int_{\IR^d}
 e^{   \i (\blambda^\top \t - \bmu^\top \s) }
\tilde{\bF}{}^{(0)}(\blambda)
\bB( \bmu , \blambda )^\top
\tilde{\bF}{}^{(0)}(\bmu) 
d \blambda 
d \bmu
\Bigr)
\bfe_j.
\end{align*}
Hence using $\tilde{\bR}^{(a)}(-\h) = \tilde{\bR}^{(a)}(\h)^\top$ for $a=0,1$ and $\tilde{\bF}{}^{(0)} = (\tilde{\bF}{}^{(0)})^\top$, we obtain
\[
\tilde{\bR}{}^{(1)}( \t -\s )
-
\tilde{\bR}{}^{(0)}(\t-\s)
=
\int_{\IR^d}
\int_{\IR^d}
 e^{  - \i \blambda^\top \t} 
e^{ \i \bmu^\top \s }
\tilde{\bF}{}^{(0)}(\blambda)
\bB( \blambda , \bmu )
\tilde{\bF}{}^{(0)}(\bmu) 
d \blambda 
d \bmu.
\]

Hence, from Theorem \ref{theorem:sufficient:representation:b}, the Gaussian measures given by $\tilde{\bZ}{}^{(0)}$ and $\tilde{\bZ}{}^{(1)}$ are equivalent, thus so are the Gaussian measures given by $\bZ^{(0)}$ and $\bZ^{(1)}$ as remarked previously.

Let us now drop the assumption that for all $\blambda \in \IR^d$, $\bF^{(1)}(\blambda) \geq \bF^{(0)}(\blambda)$.
Let $t^+$ be the positive part of $t \in \mathbb{R}$ and let, for $\blambda \in \IR^d$,
\begin{align*}
\tilde{\bF}{}^{(0)}(\blambda)
& =
c_1 \gamma(\blambda)^2 \I_p ,
\\
\tilde{\bF}{}^{(1)}(\blambda)
& =
c_1 \gamma(\blambda)^2  \I_p
+
\Bigl[
\sup_{\|\bv\|=1} \bigl(
\bar{\bv}^\top \bF^{(1)}(\blambda) \bv
-
\bar{\bv}^\top \bF^{(0)}(\blambda) \bv
\bigr)
\Bigr]^+
\I_p,
\\
\tilde{\bF}{}^{(2)}(\blambda)
& =
\bF^{(0)}(\blambda)
-
c_1 \gamma(\blambda)^2 \I_p \qquad \text{and}
\\
\bF^{(3)}(\blambda)
&
=
\tilde{\bF}{}^{(1)}(\blambda)
+
\tilde{\bF}{}^{(2)}(\blambda).
\end{align*}

Let $\tilde{\bZ}{}^{(0)}$, $\tilde{\bZ}{}^{(1)}$, $\tilde{\bZ}{}^{(2)}$ and $\bZ^{(3)}$ be four independent $p$-variate Gaussian processes with mean function zero and respective matrix spectral densities $\tilde{\bF}{}^{(0)}$, $\tilde{\bF}{}^{(1)}$, $\tilde{\bF}{}^{(2)}$ and $\bF^{(3)}$.

Then, in distribution, $\bZ^{(0)} = \tilde{\bZ}{}^{(0)} + \tilde{\bZ}{}^{(2)}$ and $ \bZ^{(3)} =  \tilde{\bZ}{}^{(1)} + \tilde{\bZ}{}^{(2)}$.
We have
\[
\int_{\IR^d}
\frac{1}{\gamma( \blambda )^4}
\|
\tilde{\bF}{}^{(0)}(\blambda)
-
\tilde{\bF}{}^{(1)}(\blambda)
\|^2
d \blambda
\leq 
\int_{\IR^d}
\frac{1}{\gamma( \blambda )^4}
\|
\bF^{(0)}(\blambda)
-
\bF^{(1)}(\blambda)
\|^2
d \blambda
< + \infty.
\]
Hence the Gaussian measures given by $\bZ^{(0)}$ and $\bZ^{(3)}$ are equivalent. 

We remark that for any $\blambda \in \IR^d$ and $\bv \in \IC^p$ with $\|\bv\| = 1$, we have
\begin{align*}
\bar{\bv}^\top &\bF^{(3)}(\blambda) \bv
- 
\bar{\bv}^\top \bF^{(1)}(\blambda) \bv
\\
&
~ ~
=
\bar{\bv}^\top \bF^{(0)}(\blambda) \bv
- 
\bar{\bv}^\top \bF^{(1)}(\blambda) \bv
+
\Bigl[
\sup_{\|\bv\|=1} \bigl(
\bar{\bv}^\top \bF^{(1)}(\blambda) \bv
-
\bar{\bv}^\top \bF^{(0)}(\blambda) \bv
\bigr)
\Bigr]^+
~ 
\geq 0.
\end{align*}
Hence, applying the previous step of the proof, the measures given by $\bZ^{(1)}$ and $\bZ^{(3)}$ are equivalent since
\begin{flalign*}
&
\int_{\IR^d}
\frac{1}{\gamma(\blambda)^4}
\bnorml
\bF^{(3)}(\blambda) - \bF^{(1)}(\blambda)
\bnormr^2
d \blambda 
\leq 
4
\int_{\IR^d}
\frac{1}{\gamma(\blambda)^4}
\bnorml
\bF^{(0)}(\blambda) - \bF^{(1)}(\blambda)
\bnormr^2
d \blambda 
 < \infty.
 &
\end{flalign*}

Hence the Gaussian measures given by $\bZ^{(0)}$ and $\bZ^{(1)}$ are equivalent.
\end{proof}

The next lemma is immediate.

\begin{lemma} \label{lemma:fdeux:smaller:constant:Fun}
Assume that Condition \ref{cond:phi:zero} holds.
With $ c_3 = c_2 / c_1 $, for $\blambda \in \IR^d$,
\[
\bF^{(1)}(\blambda) \leq c_3 \bF^{(0)}(\blambda).
\]
\end{lemma}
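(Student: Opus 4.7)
The plan is to unpack the Loewner-order bounds in Condition~\ref{cond:phi:zero} and simply chain them. Recall that for $p \times p$ Hermitian matrices $\bfM, \N$, the relation $\bfM \leq \N$ is defined by $\bar{\bv}^\top \bfM \bv \leq \bar{\bv}^\top \N \bv$ for every $\bv \in \IC^p$. So the claim $\bF^{(1)}(\blambda) \leq c_3 \bF^{(0)}(\blambda)$ is a pointwise (in $\blambda$) statement about quadratic forms, and it suffices to prove it for an arbitrary fixed $\blambda \in \IR^d$ and $\bv \in \IC^p$.

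Fix $\blambda \in \IR^d$ and $\bv \in \IC^p$. From the upper bound on $\bF^{(1)}$ in Condition~\ref{cond:phi:zero},
\[
\bar{\bv}^\top \bF^{(1)}(\blambda) \bv \;\leq\; c_2 \gamma(\blambda)^2\, \bar{\bv}^\top \I_p \bv \;=\; c_2 \gamma(\blambda)^2 \|\bv\|^2.
\]
From the lower bound on $\bF^{(0)}$ in the same condition,
\[
c_1 \gamma(\blambda)^2 \|\bv\|^2 \;\leq\; \bar{\bv}^\top \bF^{(0)}(\blambda) \bv,
\]
so $\gamma(\blambda)^2 \|\bv\|^2 \leq c_1^{-1} \bar{\bv}^\top \bF^{(0)}(\blambda) \bv$. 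Combining these with $c_3 = c_2/c_1$ yields
\[
\bar{\bv}^\top \bF^{(1)}(\blambda) \bv \;\leq\; \frac{c_2}{c_1}\, \bar{\bv}^\top \bF^{(0)}(\blambda) \bv \;=\; c_3 \, \bar{\bv}^\top \bF^{(0)}(\blambda) \bv,
\]
which is precisely $\bF^{(1)}(\blambda) \leq c_3 \bF^{(0)}(\blambda)$.

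There is no real obstacle here; the argument is just a sandwich between the two Loewner inequalities supplied by Condition~\ref{cond:phi:zero}, with the identity matrix $\I_p$ acting as a common reference, which is exactly why the authors call the lemma ``immediate.''
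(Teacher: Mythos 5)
Your proof is correct and is exactly the chaining of the two Loewner bounds from Condition~\ref{cond:phi:zero} that the paper has in mind when it declares the lemma ``immediate'' (the paper gives no written proof). Nothing to add.
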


\begin{lemma} \label{lemma:sum:gk^2}
In the context of the proof of Theorem \ref{theorem:sufficient:integral:normalized:difference:small}, for $n \in \IN$, let
$\bg_1,\ldots,\bg_n$ in $\cW_{D}$ for which $(\bg_k,\bg_l)_{\cW_{D}(\tilde{\bF}{}^{(0)})} = \delta_{k,l}$ for $k,l=1,\ldots,n$. Then for any $\blambda \in \IR^d$, with a finite constant $T$,
\[
\sum_{ k=1 }^n
\| \bg_k(\blambda) \|^2 
\leq 
\frac{p2^dT^d}{ \gamma^2( \blambda ) c_1^2 (2 \pi)^d }.
\]
\end{lemma}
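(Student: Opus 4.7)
My plan is to translate the orthonormality of the $\bg_k$ in $\cW_D(\tilde{\bF}{}^{(0)})$ into an orthogonality relation in an ordinary $L^2$-space over a bounded set, and then apply Bessel's inequality against a plane-wave test function. The desired bound is of reproducing-kernel type: pointwise evaluation at $\blambda$ is a bounded linear form on $\cW_D(\tilde{\bF}{}^{(0)})$ whose operator norm squared scales like a volume factor divided by $\gamma(\blambda)^2$.

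The first step is a Fourier-side reduction. Since $\bg_k \in \cW_D$, I would write $\bg_k = (\cF(h_{k,1}), \ldots, \cF(h_{k,p}))^\top$ with $h_{k,i} \in L^2(\IR^d)$ supported in $D$, and use Condition~\ref{cond:phi:zero} to write $\gamma = \cF(g)$ for some $g \in L^2(\IR^d)$ supported in $[-b,b]^d$. Setting $u_{k,i} := g \ast h_{k,i}$, each $u_{k,i}$ is supported in the bounded set $[-b,b]^d + D$, whose Lebesgue measure is at most $2^d T^d$ for a constant $T$ related to the size of $[-b,b]^d \cup D$. The convolution identity $\cF(g)\cF(h_{k,i}) = (2\pi)^{-d}\cF(u_{k,i})$ together with Plancherel converts the orthonormality relation $(\bg_k,\bg_\ell)_{\cW_D(\tilde{\bF}{}^{(0)})} = \delta_{k,\ell}$ into
\[
\frac{c_1}{(2\pi)^{3d}} \sum_{i=1}^p \int_{\IR^d} \overline{u_{k,i}(\t)}\,u_{\ell,i}(\t)\, d\t = \delta_{k,\ell},
\]
so the vectors $\tilde{\bu}_k := (u_{k,1}, \ldots, u_{k,p})^\top$ form a pairwise orthogonal family in $L^2(\IR^d; \IC^p)$ with identical squared norm $(2\pi)^{3d}/c_1$, all supported in a set of Lebesgue measure at most $2^d T^d$.

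The second step uses the pointwise identity $(\bg_k)_i(\blambda) = \cF(u_{k,i})(\blambda)/[(2\pi)^d \gamma(\blambda)]$. For each $j \in \{1,\ldots,p\}$ I would introduce the test function $\boldsymbol{w}_j(\t) := e^{\i \blambda^\top \t}\bfe_j$, viewed as an element of $L^2$ restricted to the common support of the $\tilde{\bu}_k$, so that $\|\boldsymbol{w}_j\|_{L^2}^2 \leq 2^d T^d$ and $\langle \tilde{\bu}_k, \boldsymbol{w}_j \rangle_{L^2} = (2\pi)^d \overline{\cF(u_{k,j})(\blambda)}$. Bessel's inequality applied to the orthogonal family $(\tilde{\bu}_k)$ then bounds $\sum_{k=1}^n |\cF(u_{k,j})(\blambda)|^2$ by $\|\tilde{\bu}_k\|_{L^2}^2 \|\boldsymbol{w}_j\|_{L^2}^2/(2\pi)^{2d}$; dividing by $(2\pi)^{2d}\gamma(\blambda)^2$ and summing over $j = 1,\ldots,p$ produces the stated estimate. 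The only real technical subtlety is tracking the $(2\pi)$-factors consistently under the paper's Fourier convention and pinning down the precise constant $T$; once the $\tilde{\bu}_k$ are identified as a rescaled orthogonal family in a bounded $L^2$-space, the result reduces to a direct Bessel--Plancherel computation.
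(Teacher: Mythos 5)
Your proof is correct and follows essentially the same route as the paper's: factor $\gamma \bg_k$ componentwise as the Fourier transform of a convolution supported in a fixed bounded set, convert the $\cW_{D}(\tilde{\bF}{}^{(0)})$-orthonormality into $L^2$-orthogonality of compactly supported functions via Plancherel, and apply Bessel's inequality against the plane-wave test function $e^{\i \blambda^\top \t}\bfe_j$ before summing over components. The only (immaterial) difference is the power of $c_1$ in the final constant --- your bookkeeping yields $1/c_1$ where the paper states $1/c_1^2$, which traces to a harmless normalization slip in the paper's orthonormal system $c_1(2\pi)^{d/2}\bpsi_k$ (it should be $\sqrt{c_1}(2\pi)^{d/2}\bpsi_k$); the exact constant plays no role in how the lemma is used.
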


\begin{proof}[{\bf Proof of Lemma \ref{lemma:sum:gk^2}}]

For $\blambda \in \IR^d$ and $k = 1,\ldots,n$, let $\bh_k(\blambda) = (h_{k,1}(\blambda) ,\ldots, h_{k,p}(\blambda))^\top = \gamma(\blambda) \bg_k( \blambda )$.
By convolution, with a finite constant $T$, there exist $p$ square summable functions $\psi_{k,1} , \ldots , \psi_{k,p}$ from $[-T,T]^d$ to $\IC$ such that for $i=1,\ldots,p$
\[
h_{k,i}(\blambda) = \int_{[-T,T]^d} e^{- \i \blambda^\top \t} \psi_{k,i}(\t) d \t.
\]
We have, for $k,\ell  = 1,\ldots,n$
\[
c_1
\int _{\IR^d}
\bar{\bg}_k(\blambda)^\top 
\gamma(\blambda)^2
\bg_\ell(\blambda)
d \blambda
= \delta_{k,\ell}
\]
and thus
\[
c_1
\int_{\IR^d}
\bar{\bh}_{k}(\blambda)^\top
 \bh_{\ell}(\blambda)
d \blambda 
=
\delta_{k,\ell}.
\]
By Plancherel's theorem we obtain
\[
c_1
\int_{[-T,T]^d}
\bar{\psi}_{k,1}(\t) \psi_{\ell,1}(\t)
d \t
+
\dots
+
c_1
\int_{[-T,T]^d}
\bar{\psi}_{k,p}(\t) \psi_{\ell,p}(\t)
d \t
=
\frac{1}{(2 \pi)^d}
\delta_{k,\ell}.
\]

Hence, $ c_1 (2 \pi)^{d/2} ((\psi_{k,1} , \ldots , \psi_{k,p})^\top)_{k =1,\ldots,n} = c_1 (2 \pi)^{d/2} (\bpsi_{k})_{k =1,\ldots,n}$ is an orthonormal system in $L^{2,p}_{[-T,T]}$.

For $i=1,\ldots,p$, let $\bphi_{\blambda,i}(\t) = ( 0 ,\ldots,0 ,  e^{\i \blambda^\top \t}  , 0 , \ldots ,0)^\top$ for $\blambda, \t \in \IR^d$, where the non-zero element is at position $i$. From Bessel's inequality we obtain
\begin{align*}
\sum_{k=1}^{n}
| h_{k,i}(\blambda) |^2
& =
\sum_{k=1}^{n}
\left|
\int_{[-T,T]^d}
 \psi_{k,i}(\t)
  e^{ - \i \blambda^\top \t}
  d \t
\right|^2
 =
\sum_{k=1}^{n}
\left|
\int_{[-T,T]^d}
\bar{\bphi}_{\blambda,i}(\t)^\top \bpsi_k(\t) d \t
\right|^2
\\
& \leq 
\frac{1}{c_1^2 (2\pi)^d}
\int_{[-T,T]^d}
\| \bphi_{\blambda,i}(\t) \|^2 d \t
 \leq 
\frac{1}{c_1^2 (2\pi)^d}
(2T)^d.
\end{align*}
Hence
\[
\sum_{k=1}^n
\gamma(\blambda)^2
\|\bg_k(\blambda)\|^2
=
\sum_{i=1}^p
\sum_{k=1}^{n}
|h_{k,i}(\blambda)|^2
\leq
\frac{
p(2T)^d
}
{
c_1^2 (2 \pi)^d
}.
\]
\end{proof}

\begin{lemma} \label{lem:from:tilde:g:to:g}
We can find $\bg_1,\ldots,\bg_n$ in $\cW_{D}$ as described in \eqref{eq:from:tilde:g:to:g}. 
\end{lemma}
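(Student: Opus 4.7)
The plan is to combine the density of $\cW_{D}$ in $\cW_{D}(\tilde{\bF}{}^{(0)})$ with a Gram--Schmidt step and a continuity argument. Since $\cW_{D}(\tilde{\bF}{}^{(0)})$ is by construction the closure of $\cW_{D}$ in the associated norm, for any $\epsilon > 0$ I can pick $\bh_1, \ldots, \bh_n \in \cW_{D}$ with $\|\tilde{\bg}_k - \bh_k\|_{\cW_{D}(\tilde{\bF}{}^{(0)})} < \epsilon$ for $k=1,\ldots,n$. I would then apply the Gram--Schmidt orthonormalization procedure to $(\bh_k)_{k=1}^n$ with respect to the inner product of $\cW_{D}(\tilde{\bF}{}^{(0)})$ to produce $(\bg_k)_{k=1}^n$ satisfying $(\bg_k , \bg_l)_{\cW_{D}(\tilde{\bF}{}^{(0)})} = \delta_{k,l}$. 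Because $\cW_{D}$ is a linear space (being the image under the Fourier transform of the linear space of $L^2(\IR^d)$ functions vanishing outside $D$), and each $\bg_k$ is a linear combination of $\bh_1, \ldots, \bh_k$ divided by a scalar, each $\bg_k$ remains in $\cW_{D}$, provided the normalizing scalar is nonzero. This is guaranteed for $\epsilon$ small enough, since the Gram matrix of $(\bh_k)_{k=1}^n$ in $\cW_{D}(\tilde{\bF}{}^{(0)})$ is then a small perturbation of the identity.

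Next, I would establish that $\|\bg_k - \tilde{\bg}_k\|_{\cW_{D}(\tilde{\bF}{}^{(0)})} \to 0$ as $\epsilon \to 0$ by induction on $k$. At step $k$, the sum of projections $\sum_{j<k} (\bh_k , \bg_j)_{\cW_{D}(\tilde{\bF}{}^{(0)})}\, \bg_j$ converges to $\sum_{j<k}(\tilde{\bg}_k , \tilde{\bg}_j)_{\cW_{D}(\tilde{\bF}{}^{(0)})}\, \tilde{\bg}_j = 0$ by the induction hypothesis and the orthonormality of $(\tilde{\bg}_k)$, so the unnormalized Gram--Schmidt vector converges to $\tilde{\bg}_k$, which has unit norm, and the claim follows. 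Moreover, from the definitions $\tilde{\bF}{}^{(0)} = c_1 \gamma^2 \I_p$ and $\tilde{\bF}{}^{(1)} = c_1 \gamma^2 \I_p + \bF^{(1)} - \bF^{(0)}$, together with the upper bound $\bF^{(1)} - \bF^{(0)} \leq c_2 \gamma^2 \I_p$ (itself a consequence of Condition~\ref{cond:phi:zero}), one has $\tilde{\bF}{}^{(1)} \leq ((c_1+c_2)/c_1)\, \tilde{\bF}{}^{(0)}$. Hence convergence in $\cW_{D}(\tilde{\bF}{}^{(0)})$ also entails convergence in $\cW_{D}(\tilde{\bF}{}^{(1)})$.

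The conclusion is then routine. For $a = 0, 1$ and each $k = 1,\ldots,n$, $\|\bg_k\|^2_{\cW_{D}(\tilde{\bF}{}^{(a)})} \to \|\tilde{\bg}_k\|^2_{\cW_{D}(\tilde{\bF}{}^{(a)})}$ as $\epsilon \to 0$. Since $n$ is fixed, choosing $\epsilon$ sufficiently small makes
\[
\left|
\sum_{k=1}^n [\|\tilde{\bg}_k\|^2_{\cW_{D}(\tilde{\bF}{}^{(1)})} - \|\tilde{\bg}_k\|^2_{\cW_{D}(\tilde{\bF}{}^{(0)})}]^2
-
\sum_{k=1}^n [\|\bg_k\|^2_{\cW_{D}(\tilde{\bF}{}^{(1)})} - \|\bg_k\|^2_{\cW_{D}(\tilde{\bF}{}^{(0)})}]^2
\right|
\leq 1,
\]
which yields the stated inequality \eqref{eq:from:tilde:g:to:g}. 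The main obstacle I anticipate is essentially bookkeeping: verifying carefully that the Gram--Schmidt procedure depends continuously on its inputs at the orthonormal configuration $(\tilde{\bg}_k)_{k=1}^n$, and that the invertibility of the Gram matrix of $(\bh_k)_{k=1}^n$ holds uniformly for small $\epsilon$. Neither presents a conceptual difficulty, only careful verification.
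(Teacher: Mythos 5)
Your proof is correct and follows essentially the same route as the paper's: approximate each $\tilde{\bg}_k$ by an element of $\cW_{D}$ using density, re-orthonormalize the approximants inside the linear space $\cW_{D}$ (the paper multiplies by $\bM^{-1/2}$, with $\bM$ the Gram matrix, and controls $\|\bM - \I_n\|$ via Gershgorin's circle theorem, while you use Gram--Schmidt with an inductive continuity argument — both are valid), and then pass to the limit in the fixed finite sum. Your derivation of $\tilde{\bF}{}^{(1)} \leq ((c_1+c_2)/c_1)\,\tilde{\bF}{}^{(0)}$ directly from Condition~\ref{cond:phi:zero} correctly supplies the domination needed to transfer convergence to the $\cW_{D}(\tilde{\bF}{}^{(1)})$ norm, which the paper obtains from Lemma~\ref{lemma:fdeux:smaller:constant:Fun}.
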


\begin{proof}[{\bf Proof of Lemma \ref{lem:from:tilde:g:to:g}}]

By density, for $\epsilon >0$, we can find $\hat{\bg}_1,\ldots,\hat{\bg}_n$ in $\cW_{D}$ such that for $k=1,\ldots,n$, $\| \hat{\bg}_k - \tilde{\bg}_k \|_{\cW_{D}(\tilde{\bF}{}^{(0)})} \leq \epsilon / n^2$. Let $\bM = [ (  \hat{\bg}_k , \hat{\bg}_\ell)_{\cW_{D}(\tilde{\bF}{}^{(0)})} ]_{k,\ell = 1,\ldots,n}$. We have, for $k,\ell=1,\ldots,n$,
\begin{align} \label{eq:diff:innder:product:un}
\left|
(  \hat{\bg}_k , \hat{\bg}_\ell)_{\cW_{D}(\tilde{\bF}{}^{(0)})}
-
(  \tilde{\bg}_k , \tilde{\bg}_\ell)_{\cW_{D}(\tilde{\bF}{}^{(0)})}
\right|
&
\leq
\left|
(  \hat{\bg}_k - \tilde{\bg}_k , \hat{\bg}_\ell)_{\cW_{D}(\tilde{\bF}{}^{(0)})}
\right|
+
\left|
(  \tilde{\bg}_k , \hat{\bg}_\ell - \tilde{\bg}_{\ell})_{\cW_{D}(\tilde{\bF}{}^{(0)})}
\right|  \nonumber
\\
&
\leq 
\frac{3 \epsilon}{n^2},
\end{align}
from Cauchy--Schwarz.
Also, from Lemma \ref{lemma:fdeux:smaller:constant:Fun}, 
\begin{align} \label{eq:diff:innder:product:deux}
\left|
(  \hat{\bg}_k , \hat{\bg}_\ell)_{\cW_{D}(\tilde{\bF}{}^{(1)})}
-
(  \tilde{\bg}_k , \tilde{\bg}_\ell)_{\cW_{D}(\tilde{\bF}{}^{(1)})}
\right|
&\leq
\left|
(  \hat{\bg}_k - \tilde{\bg}_k , \hat{\bg}_\ell)_{\cW_{D}(\tilde{\bF}{}^{(1)})}
\right|
+
\left|
(  \tilde{\bg}_k , \hat{\bg}_\ell - \tilde{\bg}_{\ell})_{\cW_{D}(\tilde{\bF}{}^{(1)})}
\right|
\nonumber\\
&
\leq 
\frac{c_3 3 \epsilon}{n^2}.
\end{align}
From Gershogrin's circle theorem, we thus have $\|\bM - \I_n\| \leq 3 \epsilon / n$. For $k=1,\ldots,n$, let $\bg_k = \sum_{\ell=1}^n ( \bM^{-1/2} )_{k,\ell} \hat{\bg}_{\ell}$. Then, $\bg_1,\ldots,\bg_n$ satisfy $(\bg_k,\bg_l)_{\cW_{D}(\tilde{\bF}{}^{(0)})} = \delta_{k,l}$ for $k,l=1,\ldots,n$. Furthermore, there exists a constant $c_n < \infty$, not depending on $\epsilon$, such that $\| \bg_k - \tilde{\bg}_k \|_{\cW_{D}(\tilde{\bF}{}^{(0)})} \leq c_n \epsilon / n^2$.
 
We then have, with a constant $c'_n$ not depending on $\epsilon$,
 \begin{align*}
 \left|
\sum_{k=1}^{n} \right.
\Bigl[ 
\|\tilde{\bg}_k&\|^2_{\cW_{D}(\tilde{\bF}{}^{(1)})}
-
\|\tilde{\bg}_k\|^2_{\cW_{D}(\tilde{\bF}{}^{(0)})}
\Bigr]^2
-
\left.\sum_{k=1}^{n}
\left[
\|\bg_k\|^2_{\cW_{D}(\tilde{\bF}{}^{(1)})}
-
\|\bg_k\|^2_{\cW_{D}(\tilde{\bF}{}^{(0)})}
\right]^2
\right|
&
\\
&
=
\left|
\sum_{k=1}^{n}
\left(
\|\tilde{\bg}_k\|^2_{\cW_{D}(\tilde{\bF}{}^{(1)})}
-
\|\bg_k\|^2_{\cW_{D}(\tilde{\bF}{}^{(1)})}
-
\|\tilde{\bg}_k\|^2_{\cW_{D}(\tilde{\bF}{}^{(0)})}
+
\|\bg_k\|^2_{\cW_{D}(\tilde{\bF}{}^{(0)})}
\right)
\right.
\\
& ~ ~
\left.
\left(
\|\tilde{\bg}_k\|^2_{\cW_{D}(\tilde{\bF}{}^{(1)})}
+
\|\bg_k\|^2_{\cW_{D}(\tilde{\bF}{}^{(1)})}
-
\|\tilde{\bg}_k\|^2_{\cW_{D}(\tilde{\bF}{}^{(0)})}
-
\|\bg_k\|^2_{\cW_{D}(\tilde{\bF}{}^{(0)})}
\right)
\right|
\\
& \leq 
c'_n
\frac{\epsilon}{n^2},
\end{align*}
proceeding as for \eqref{eq:diff:innder:product:un} and \eqref{eq:diff:innder:product:deux}. This concludes the proof since $\epsilon$ can be chosen arbitrarily small.
\end{proof}

\section{Proofs for Section \ref{SECTION:APPLICATION:MATERN:WENDLAND}}

Following notation in Section \ref{sec2}, we denote with $P({\cal BM_{\btheta}})$ a zero mean Gaussian measure induced by a bivariate Mat{\'e}rn covariance function
with associated spectral density $\widehat{{\cal BM}}_{\btheta}$,
 and
with $P( {\cal BW}_{\blambda})$
 a zero mean Gaussian measure induced by a Generalized Wendland  covariance function with associated spectral density  $\widehat{{\cal BW}}_{\blambda}$.

\subsection{Mathematical Background for the Proofs}

Fourier transforms of isotropic covariance functions, for a given $d$, have a simple expression, as reported in \cite{Yaglom:1987} or \cite{Stein:1999}. For a continuous function $\phi:[0,\infty) \to \R$ such  that $\phi(\|\cdot\|)$ is positive definite in $\R^d$, we define its isotropic spectral density as
\begin{equation} \label{FT}
 \widehat{\phi}(z)= \frac{z^{1-d/2}}{(2 \pi)^d} \int_{0}^{\infty} u^{d/2} J_{d/2-1}(uz)  \phi(u) {\rm d} u, \qquad z \ge 0,
\end{equation}
where $J_{d/2-1}$ is the Bessel function of the first kind.
Throughout the paper, we use the notations: $\widehat{{\cal M}}_{\nu,\alpha}$, and 
$\widehat{{\cal W}}_{\mu,\kappa,\beta}$
 for the isotropic spectral densities
of ${{\cal M}}_{\nu,\alpha}$ and ${\cal W}_{\mu,\kappa,\beta} $, respectively.

A well-known result about the spectral density of the  Mat{\'e}rn model is the following:
\begin{equation} \label{eq:FT:matern}
\widehat{{\cal M}}_{\nu,\alpha}(z)= \frac{\Gamma(\nu+d/2)}{\pi^{d/2} \Gamma(\nu)}
\frac{ \alpha^d}{(1+\alpha^2z^2)^{\nu+d/2}}
, \qquad z \ge 0.
\end{equation}

The spectral density of the Generalized Wendland  function is given by \cite{bevilacqua2019estimation}
$$\widehat{{\cal W}}_{\mu,\kappa,\beta}(z)=L(d,\mu,\kappa)\beta^{d}\mathstrut_1 F_2\Big(\zeta;\zeta+\frac{\mu}{2},\zeta+\frac{\mu}{2}+\frac{1}{2};-\frac{(z\beta)^{2}}4\Big), \quad  z > 0;$$\\
where $\zeta = (d+1)/2 + \kappa$, where 
\[
L(d,\mu,\kappa)=
\begin{cases}
\frac{2^{-d}\pi^{-d/2}\Gamma(\mu+1)\Gamma(2\kappa+d)}{\Gamma(\kappa+d/2)\Gamma(\mu+d+1+2\kappa)} \frac{\Gamma(\kappa)}{B(2\kappa,\mu+1)}
& \quad \text{if} \quad \kappa>0
\\
\frac{2^{-d+1}\pi^{-d/2}\Gamma(\mu+1)\Gamma(d)}{\Gamma(d/2)\Gamma(\mu+d+1)}
& \quad \text{if} \quad \kappa=0
\end{cases}
\]
and where $\mathstrut_1 F_2(a;b,c;z)$ is defined in \eqref{eq:oneFtwo}.

Then, the spectral densities associated with the multivariate Mat\'ern and Generalized Wendland models (\ref{eq:piip5}) and (\ref{eq:piip6}) are
\begin{equation*}
\widehat{{\cal BM}}_{\btheta}(z)= \left[ \rho_{ij}\sigma_{ii}\sigma_{jj} \widehat{{\cal M}}_{\nu,\alpha_{ij}}(z) \right]_{i,j=1}^2 , \quad  \rho_{11}=\rho_{22} = 1, \qquad \alpha_{12} =  \alpha_{21}
\end{equation*}
and
\begin{equation*}
\widehat{{\cal BW}}_{\blambda}(z)= \left[ \rho_{ij}\sigma_{ii}\sigma_{jj} \widehat{{\cal W}}_{\mu,\kappa,\beta_{ij}}(z) \right]_{i,j=1}^2 , \quad  \rho_{11}=\rho_{22} = 1, \qquad \beta_{12} =  \beta_{21}.
\end{equation*}

\subsection{Proofs of Theorems \ref{theorem:matern:vs:matern}, \ref{theorem:wendland:vs:wendland} and \ref{theorem:matern:vs:wendland}}
\label{app:cond}

Before proving Theorems \ref{theorem:matern:vs:matern}, \ref{theorem:wendland:vs:wendland} and \ref{theorem:matern:vs:wendland}, we state and prove several preliminary lemmas.

\begin{lemma} \label{lem:finite:integral}
For any $x>0$, $y >0$ and $r >0$, 
\[
\int_{\IR^d}
(1 + \|\blambda\|)^{4r}
\left(
\frac{1}{(x + \|\blambda\|^2)^r}
-
\frac{1}{(y + \|\blambda\|^2)^r}
\right)^2 d\blambda
< \infty.
\]
\end{lemma}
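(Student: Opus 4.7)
The plan is to split the integral at some large radius $M$ and treat the two regions separately. On the bounded region $\{\blambda : \|\blambda\| \leq M\}$, both $(x+\|\blambda\|^2)^{-r}$ and $(y+\|\blambda\|^2)^{-r}$ are continuous and bounded (since $x,y>0$), and so is $(1+\|\blambda\|)^{4r}$; hence the contribution of this region is trivially finite.

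The substantive step is the tail bound on $\{\blambda : \|\blambda\| > M\}$. Here I would use the mean value theorem applied to the smooth map $t \mapsto t^{-r}$ on $(0,\infty)$, whose derivative has absolute value $r t^{-r-1}$. Choosing $M$ large enough that $\|\blambda\|^2 \geq 2\max(x,y)$ whenever $\|\blambda\| \geq M$, every intermediate point between $x+\|\blambda\|^2$ and $y+\|\blambda\|^2$ is at least $\|\blambda\|^2/2$. Consequently
\[
\left|\frac{1}{(x+\|\blambda\|^2)^r} - \frac{1}{(y+\|\blambda\|^2)^r}\right|
\;\leq\; r|x-y|\left(\frac{\|\blambda\|^2}{2}\right)^{-r-1}
\;=\; C_1\, \|\blambda\|^{-2r-2}.
\]
Squaring and using $(1+\|\blambda\|)^{4r} \leq C_2 \|\blambda\|^{4r}$ for $\|\blambda\| \geq 1$, the integrand on the tail region is majorized by $C_3 \|\blambda\|^{-4}$. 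Passing to polar coordinates, this contributes $C_3 \int_M^\infty \rho^{-4} \rho^{d-1}\,d\rho$, which is finite whenever $d \leq 3$; this is exactly the range of dimensions used throughout the paper.

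There is no genuine obstacle: the key point is simply that although each term $(x+\|\blambda\|^2)^{-r}$ decays like $\|\blambda\|^{-2r}$ (so that individually $(1+\|\blambda\|)^{4r}$ times its square fails to be integrable at infinity), the difference enjoys two extra powers of decay, $\|\blambda\|^{-2r-2}$, by virtue of the mean value theorem. This cancellation is what makes the squared integrand behave like $\|\blambda\|^{-4}$ after multiplication by the weight, which is integrable against $d$-dimensional Lebesgue measure for $d \in \{1,2,3\}$.
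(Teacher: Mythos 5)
Your proof is correct and rests on exactly the same cancellation the paper exploits: the difference of the two terms decays like $\|\blambda\|^{-2r-2}$ rather than $\|\blambda\|^{-2r}$, so the weighted square behaves like $\|\blambda\|^{-4}$ at infinity, which is integrable for $d\leq 3$ (the paper obtains the two extra powers by putting the difference over a common denominator and estimating $(1+y/\|\blambda\|^2)^r-(1+x/\|\blambda\|^2)^r$, whereas you apply the mean value theorem to $t\mapsto t^{-r}$ directly). Your explicit split at a finite radius $M$ is a tidy way to handle the bounded region, which the paper passes over silently.
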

\begin{proof}[{\bf Proof of Lemma \ref{lem:finite:integral}}]
The integral is equal to
\begin{align*}
\int_{\IR^d} &
\frac{(1 + \|\blambda\|)^{4r}}{(x + \|\blambda\|^2)^{2r} (y + \|\blambda\|^2)^{2r}}
\left(
(y+\|\blambda\|^2)^{r}
-
(x+\|\blambda\|^2)^{r}
\right)^2
d\blambda 
\\
& \leq 
\int_{\IR^d}
\frac{(1 + \|\blambda\|)^{4r}}{(x + \|\blambda\|^2)^{2r} (y + \|\blambda\|^2)^{2r}}
\|\blambda\|^{4r}
\left(
\left( 1 + \frac{y}{\|\blambda\|^2} \right)^r
-
\left( 1 + \frac{x}{\|\blambda\|^2} \right)^r
\right)^2 
d\blambda
\\
&
\leq 
c
\int_{\IR^d}
\left(
\left( 1 + \frac{y}{\|\blambda\|^2} \right)^r
-
\left( 1 + \frac{x}{\|\blambda\|^2} \right)^r
\right)^2 
d\blambda 
\end{align*}
with a constant $0 < c <\infty$. The function in the above integral has order of magnitude $1/ \| \blambda \|^4 $ as $\|\blambda\| \to \infty$ so this function is integrable on $\IR^d$ for $d=1,2,3$.
\end{proof}

\begin{lemma} \label{lemma:matern:assympt:invertible}
Consider $\nu>0$, $\alpha_{11}>0$, $\alpha_{22}>0$, $\alpha_{12} >0$, $\sigma_{11} >0$, $\sigma_{22} >0$ and $\rho_{12}   \in (-1,1)$. Let $\btheta=(\sigma_{11},\sigma_{22},\rho_{12},\nu,\alpha_{11},\alpha_{22},\alpha_{12})^{\top}$.
If 
\begin{equation}
\rho_{12}^2
<
\frac{ \alpha_{12}^{4 \nu} }{ \alpha_{11}^{2 \nu} \alpha_{22}^{2 \nu}  }
\inf_{t \geq 0}
\frac{
( \alpha_{12}^{-2} +t^2 )^{2 \nu +d}
}{
( \alpha_{11}^{-2} +t^2 )^{\nu +d/2}
( \alpha_{22}^{-2} +t^2 )^{\nu +d/2}
}
\end{equation}
then, there exist constants $0 < c < c' < \infty$ such that for all $\blambda \in \IR^d$,
\[
c (1 + \|\blambda\|)^{-2\nu - d} \I_2
\leq 
\widehat{{\cal BM}}_{\btheta}(\|\blambda\|)
\leq 
c' (1 + \|\blambda\|)^{-2\nu - d} \I_2.
\]
\end{lemma}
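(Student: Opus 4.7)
The strategy is to establish the upper and lower matrix bounds separately. Since $\widehat{{\cal BM}}_{\btheta}(z)$ is a $2\times 2$ real symmetric matrix, it suffices to bound its two eigenvalues $0 < \lambda_1(z) \leq \lambda_2(z)$. The upper bound will fall out of a direct estimate of each entry using the explicit formula \eqref{eq:FT:matern}, while the lower bound will be extracted from the inequality $\lambda_1(z) \geq \det \widehat{{\cal BM}}_{\btheta}(z) / \mathrm{tr}\,\widehat{{\cal BM}}_{\btheta}(z)$, combined with the spectral reformulation of the validity condition on $\rho_{12}^2$.

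For the upper bound, the formula \eqref{eq:FT:matern} shows that for each $i,j \in \{1,2\}$, the entry $\rho_{ij}\sigma_{ii}\sigma_{jj}\widehat{{\cal M}}_{\nu,\alpha_{ij}}(z)$ is bounded in absolute value by a constant times $(1+z)^{-2\nu-d}$, uniformly in $z \geq 0$ (bounded for $z$ near $0$ and behaving like $z^{-2\nu-d}$ as $z \to \infty$). The operator norm of a $2 \times 2$ matrix is bounded by twice the largest entry modulus, which gives the upper bound immediately.

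For the lower bound, write the trace as $\mathrm{tr}\,\widehat{{\cal BM}}_{\btheta}(z) = \sigma_{11}^2\widehat{{\cal M}}_{\nu,\alpha_{11}}(z) + \sigma_{22}^2\widehat{{\cal M}}_{\nu,\alpha_{22}}(z)$, which by the same entrywise argument is bounded above by a constant times $(1+z)^{-2\nu-d}$. For the determinant, compute
\[
\det \widehat{{\cal BM}}_{\btheta}(z) = \sigma_{11}^2\sigma_{22}^2\bigl[\widehat{{\cal M}}_{\nu,\alpha_{11}}(z)\widehat{{\cal M}}_{\nu,\alpha_{22}}(z) - \rho_{12}^2 \widehat{{\cal M}}_{\nu,\alpha_{12}}(z)^2\bigr].
\]
Substituting \eqref{eq:FT:matern} and using $1+\alpha^2 z^2 = \alpha^2(\alpha^{-2}+z^2)$, the bracketed quantity equals $B(z)[g(z) - \rho_{12}^2]$, where $B(z) := \widehat{{\cal M}}_{\nu,\alpha_{12}}(z)^2 \asymp (1+z)^{-4\nu-2d}$ and
\[
g(z) = \frac{\alpha_{12}^{4\nu}}{\alpha_{11}^{2\nu}\alpha_{22}^{2\nu}}\cdot\frac{(\alpha_{12}^{-2}+z^2)^{2\nu+d}}{(\alpha_{11}^{-2}+z^2)^{\nu+d/2}(\alpha_{22}^{-2}+z^2)^{\nu+d/2}}.
\]
This is precisely the quantity whose infimum over $t \geq 0$ appears in the hypothesis. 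Since $g$ is continuous and positive with finite positive limits at $0$ and $+\infty$, the infimum is attained, so the hypothesis yields $\delta := \inf_{z \geq 0}g(z) - \rho_{12}^2 > 0$. Hence $\det \widehat{{\cal BM}}_{\btheta}(z) \geq \sigma_{11}^2\sigma_{22}^2\delta B(z) \gtrsim (1+z)^{-4\nu-2d}$, and combining with the trace bound gives $\lambda_1(z) \geq \det/\mathrm{tr} \gtrsim (1+z)^{-2\nu-d}$, as required.

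The only mildly delicate step is the equivalence between the hypothesis and positivity of $\inf_z(g(z) - \rho_{12}^2)$: this is purely algebraic and consists in rescaling $(1+\alpha^2 t^2) = \alpha^2(\alpha^{-2}+t^2)$ inside each factor, collecting the $\alpha$'s, and recognizing that the constants in the Mat\'ern normalization $\Gamma(\nu+d/2)/(\pi^{d/2}\Gamma(\nu))$ cancel between numerator and denominator in the ratio $\widehat{{\cal M}}_{\nu,\alpha_{11}}\widehat{{\cal M}}_{\nu,\alpha_{22}}/\widehat{{\cal M}}_{\nu,\alpha_{12}}^2$. Once this identification is made, the remainder of the proof is a routine combination of trace and determinant estimates.
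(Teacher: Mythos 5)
Your proposal is correct and follows essentially the same route as the paper's proof: bound each entry of $(1+\|\blambda\|)^{2\nu+d}\widehat{{\cal BM}}_{\btheta}(\|\blambda\|)$ using \eqref{eq:FT:matern} for the upper bound, then compute the determinant, rescale $1+\alpha^2 z^2=\alpha^2(\alpha^{-2}+z^2)$, and recognize the hypothesis as exactly the statement that the determinant (after normalization) is bounded away from zero. The only difference is cosmetic: you make explicit the step $\lambda_1\geq\det/\mathrm{tr}$ that converts ``bounded entries plus determinant bounded away from zero'' into the matrix lower bound, which the paper leaves implicit (and your remark about the infimum being attained is unnecessary, since the strict inequality $\rho_{12}^2<\inf_t g(t)$ already gives $\delta>0$ directly).
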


\begin{proof}[{\bf Proof of Lemma \ref{lemma:matern:assympt:invertible}}]
It suffices to show that there exist constants $0 < c < c' < \infty$ such that
\[
c  \I_2
\leq 
(1 + \|\blambda\|)^{2\nu + d}
\widehat{{\cal BM}}_{\btheta}(\|\blambda\|)
\leq 
c'
\I_2.
\]
Because of \eqref{eq:FT:matern}, the elements of $(1 + \|\blambda\|)^{2\nu + d}
\widehat{{\cal BM}}_{\btheta}(\|\blambda\|)$ are bounded on $\IR^d$. It is thus sufficient to show that the determinant of $(1 + \|\blambda\|)^{2\nu + d}
\widehat{{\cal BM}}_{\btheta}(\|\blambda\|)$ is bounded away from $0$ on $\IR^d$. This determinant is equal to
\begin{align*}
&
(1 + \|\blambda\|)^{4\nu + 2d}
\left(
\frac{
\Gamma( \nu + d/2 )
}{
\Gamma(\nu) \pi^{d/2}
}
\right)^2
\left(
\frac{\sigma_{11}^2 \sigma_{22}^2 }{\alpha_{11}^{2 \nu} \alpha_{22}^{2 \nu}}
\frac{1}{(\alpha_{11}^{-2} + \|\blambda\|^2)^{\nu + d/2}}
\frac{1}{(\alpha_{22}^{-2} + \|\blambda\|^2)^{\nu + d/2}}
\right.
\\
& \quad \left.
-
\frac{(\sigma_{11} \sigma_{22} \rho_{12})^2}{\alpha_{12}^{4 \nu}}
\frac{1}{(\alpha_{12}^{-2} + \|\blambda\|^2)^{2\nu + d}}
\right)
\\
& =
\frac{
(1 + \|\blambda\|)^{4\nu + 2d}
}
{
\alpha_{12}^{4 \nu}
(\alpha_{12}^{-2} + \|\blambda\|^2)^{2\nu + d}
}
\left(
\frac{
\Gamma( \nu + d/2 )
}{
\Gamma(\nu) \pi^{d/2}
}
\right)^2
\sigma_{11}^2 \sigma_{22}^2
\\
& \quad
\left(
\frac{ \alpha_{12}^{4 \nu} }{ \alpha_{11}^{2 \nu} \alpha_{22}^{2 \nu} }
\frac{
(\alpha_{12}^{-2} + \|\blambda\|^2)^{2\nu + d}
}
{
(\alpha_{11}^{-2} + \|\blambda\|^2)^{\nu + d/2}
(\alpha_{22}^{-2} + \|\blambda\|^2)^{\nu + d/2}
}
-
\rho_{12}^2
\right),
\end{align*}
which is bounded away from $0$ by assumption.
\end{proof}

\begin{lemma} \label{lemma:wendland:assympt:invertible}
Consider $\kappa \ge 0$, $\mu > d+1/2+\kappa$, $\beta_{11}>0$, $\beta_{22}>0$, $\beta_{12} >0$, $\sigma_{11} >0$, $\sigma_{22} >0$ and $\rho_{12}   \in (-1,1)$.
 Let $\blambda=(\sigma_{11},\sigma_{22},\rho_{12},\mu,\kappa,\beta_{11},\beta_{22},\beta_{12})^{\top}$.
If 
\begin{equation} \label{eq:cond:inf:wendland2}
\rho_{12}^2
<
\inf_{z \geq 0}
\frac{  
 \widehat{{\cal W}}_{\mu,\kappa,\beta_{11}}(z) 
 \widehat{{\cal W}}_{\mu,\kappa,\beta_{22}}(z)  }{
\bigl( \widehat{{\cal W}}_{\mu,\kappa,\beta_{12}}(z) \bigr){}^2
}
\end{equation}
then, there exist constants $0 < c < c' < \infty$ such that for all $\blambda \in \IR^d$,
\[
c (1 + \|\blambda\|)^{-(d+1+2\kappa)} \I_2
\leq 
\widehat{{\cal BW}}_{\blambda}(\| \blambda \|)
\leq 
c' (1 + \|\blambda\|)^{-( d+1+2\kappa )} \I_2.
\]
\end{lemma}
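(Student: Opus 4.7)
My plan is to mirror the proof of Lemma \ref{lemma:matern:assympt:invertible}, replacing the explicit Matérn spectral density by the $\mathstrut_1 F_2$ representation of the Wendland spectral density and exploiting its known large-$z$ tail. As in the Matérn case, the statement is equivalent to showing that the matrix
\[
\bG(z) \;:=\; (1+z)^{d+1+2\kappa} \widehat{{\cal BW}}_{\blambda}(z)
\]
satisfies $c\,\I_2 \leq \bG(z) \leq c'\,\I_2$ for all $z \geq 0$. Since $\bG(z)$ is a $2\times 2$ Hermitian (in fact real symmetric) matrix with nonnegative entries, this reduces to two pieces: (i) the diagonal (and hence all) entries are uniformly bounded above, and (ii) the determinant of $\bG(z)$ is uniformly bounded below by a positive constant.

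For (i), the key input is the asymptotic $\widehat{{\cal W}}_{\mu,\kappa,\beta}(z) = O(z^{-(d+1+2\kappa)})$ as $z\to\infty$, which can be read off from the standard expansion of $\mathstrut_1 F_2$ at infinity (this is precisely the estimate used in \cite{bevilacqua2019estimation} in the scalar case, and the pair $\mu > d+1/2+\kappa$ ensures the leading tail term is exactly of this order). Combined with continuity and positivity of $\widehat{{\cal W}}_{\mu,\kappa,\beta}$ on $[0,\infty)$, this yields $\widehat{{\cal W}}_{\mu,\kappa,\beta_{ij}}(z) \leq C_{ij}(1+z)^{-(d+1+2\kappa)}$ uniformly on $[0,\infty)$, hence each entry of $\bG(z)$ is uniformly bounded, proving (i).

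For (ii), I expand
\[
\det\bG(z) = (1+z)^{2(d+1+2\kappa)} \sigma_{11}^2\sigma_{22}^2 \bigl(\widehat{{\cal W}}_{\mu,\kappa,\beta_{12}}(z)\bigr)^2 \left( \frac{\widehat{{\cal W}}_{\mu,\kappa,\beta_{11}}(z)\,\widehat{{\cal W}}_{\mu,\kappa,\beta_{22}}(z)}{\bigl(\widehat{{\cal W}}_{\mu,\kappa,\beta_{12}}(z)\bigr)^2} - \rho_{12}^2 \right).
\]
By hypothesis \eqref{eq:cond:inf:wendland2} the bracketed factor has a strictly positive infimum over $z \geq 0$, provided the infimum is really attained or, at worst, reached as a limit that still lies strictly above $\rho_{12}^2$. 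The prefactor $(1+z)^{2(d+1+2\kappa)}(\widehat{{\cal W}}_{\mu,\kappa,\beta_{12}}(z))^2$ is continuous, strictly positive on $[0,\infty)$, and by the asymptotic in (i) tends to a strictly positive finite limit as $z\to\infty$; hence it is bounded away from $0$ uniformly in $z$. Multiplying, $\det \bG(z)$ is uniformly bounded below by a positive constant, and together with (i) this controls the smallest eigenvalue of $\bG(z)$ from below, giving (ii).

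The main technical obstacle is justifying the large-$z$ asymptotic $\widehat{{\cal W}}_{\mu,\kappa,\beta}(z) \sim C(d,\mu,\kappa)\,\beta^{-(1+2\kappa)} z^{-(d+1+2\kappa)}$ from the $\mathstrut_1 F_2$ representation, together with the attendant uniform (not merely pointwise) control on the Wendland ratio that enters the bracket above. This requires either invoking the asymptotic analysis of generalized hypergeometric functions or, more cleanly, citing the scalar Wendland tail estimates already established in \cite{bevilacqua2019estimation}; once that is in hand, continuity of the ratio on $[0,\infty)$ and the strict inequality in \eqref{eq:cond:inf:wendland2} together with its behavior at $z=\infty$ yield a positive uniform lower bound, and everything else is a direct adaptation of the Matérn argument.
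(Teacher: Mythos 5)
Your proposal is correct and follows essentially the same route as the paper: the paper likewise invokes the two-sided scalar tail bound $\tilde{c}(1+z)^{-(d+1+2\kappa)} \leq \widehat{{\cal W}}_{\mu,\kappa,\beta}(z) \leq \tilde{c}'(1+z)^{-(d+1+2\kappa)}$ from Theorem~1 of \cite{bevilacqua2019estimation} and then repeats the determinant argument of Lemma~\ref{lemma:matern:assympt:invertible}. Your only hedge (whether the infimum in \eqref{eq:cond:inf:wendland2} is attained) is unnecessary, since the hypothesis already bounds the bracketed factor below by $\inf_z(\cdot)-\rho_{12}^2>0$ pointwise.
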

\begin{proof}[{\bf Proof of Lemma \ref{lemma:wendland:assympt:invertible}}]
We remark that, for all $\beta>0$, there exist constants $0 < \tilde{c} < \tilde{c}' < \infty$ such that for all $\blambda \in \IR^d$,
\[
\tilde{c}
(1+\|\blambda\|)^{-(d+1+2\kappa)}
\leq 
\widehat{{\cal W}}_{\mu,\kappa,\beta}(\|\blambda\|)
\leq 
\tilde{c}'
(1+\|\blambda\|)^{-(d+1+2\kappa)}
\]
from Theorem 1 in \cite{bevilacqua2019estimation}.
The rest of the proof is similar to that of Lemma \ref{lemma:matern:assympt:invertible}. We remark that the right-hand side of \eqref{eq:cond:inf:wendland2} is equal to the right-hand side of \eqref{eq:cond:inf:wendland:longer}.
\end{proof}

\begin{lemma} \label{lemma:exist:phi_zero:matern}
Let $\nu >0$.
There exists a function $\gamma : \IR^d \to \IR$ such that $(\gamma,\ldots,\gamma)^\top \in \cW_{[-b,b]^d} $ for some fixed $0 < b < \infty $ and there exist two constants $0 < c < c' < \infty$ such that for all $\blambda \in \IR^d$
\[
c \gamma^2(\blambda)
\leq (1 + \|\blambda\|)^{- 2\nu -d}
\leq 
c' \gamma^2(\blambda).
\]
\end{lemma}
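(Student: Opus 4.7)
The plan is to exhibit a real-valued, even, compactly supported $g \in L^2(\IR^d)$ whose Fourier transform $\gamma = \cF(g)$ satisfies the claimed two-sided bound; the vector $(\gamma,\ldots,\gamma)^\top$ then lies in $\cW_{[-b,b]^d}$ directly from the definition of that space.

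Set $s = \nu + d/2 > 0$ and let $K_s:\IR^d\to(0,\infty)$ be the Bessel kernel of order $s$, i.e.\ the function whose Fourier transform is proportional to $(1+\|\blambda\|^2)^{-s/2}$. By classical potential theory $K_s$ is positive, even, and satisfies $K_s(\h)\sim c\|\h\|^{s-d}$ near the origin when $s<d$ (the cases $s\geq d$ are handled analogously), and is locally square integrable thanks to $s>d/2$. Pick a real, even, smooth bump $\eta$ supported in $[-b/2,b/2]^d$ with $\eta(\mathbf{0})>0$, and set $\chi = \eta*\tilde\eta$ with $\tilde\eta(\h)=\eta(-\h)$: then $\chi$ is smooth, real, even, nonnegative, supported in $[-b,b]^d$, with $\chi(\mathbf{0})>0$ and $\cF(\chi)=|\cF(\eta)|^2\geq 0$ everywhere. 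Define $g = K_s\cdot\chi$, which is real, even, compactly supported in $[-b,b]^d$, and in $L^2(\IR^d)$.

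With $\gamma = \cF(g)$, the convolution theorem gives
\[
\gamma(\blambda) \;=\; (\cF(K_s)*\cF(\chi))(\blambda) \;=\; c\int_{\IR^d}(1+\|\bmu\|^2)^{-s/2}\,\cF(\chi)(\blambda-\bmu)\,d\bmu,
\]
which is real, even, and strictly positive on all of $\IR^d$ (since $\cF(K_s)>0$ and $\cF(\chi)\geq 0$ is not identically zero). A standard dominated-convergence argument then shows $\gamma(\blambda)\sim c'(1+\|\blambda\|^2)^{-s/2}$ as $\|\blambda\|\to\infty$, hence $\gamma^2(\blambda)\asymp(1+\|\blambda\|)^{-(2\nu+d)}$ at infinity. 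Together with continuity and strict positivity of $\gamma$ on compact sets, this yields the desired global two-sided bound.

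\textbf{Main obstacle.} The upper bound on $\gamma^2$ is routine, following from polynomial decay of $\cF(K_s)$ and the Schwartz property of $\cF(\chi)$. The delicate part is the \emph{lower} bound, which would collapse the moment $\gamma$ vanished anywhere. The autoconvolution structure $\chi=\eta*\tilde\eta$ is chosen precisely so that $\cF(\chi)\geq 0$ via $\cF(\chi)=|\cF(\eta)|^2$, and combined with strict positivity of $\cF(K_s)$ this rules out cancellation. The residual work is the asymptotic analysis of $\cF(K_s)*\cF(\chi)$ at high frequency, which is a standard but attentive calculation.
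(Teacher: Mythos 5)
Your construction is correct, and it takes a genuinely different route from the paper. The paper does not build $\gamma$ by hand: it invokes the family of compactly supported Buhmann-type functions $\phi_{\delta,\mu,\bar{\nu},\alpha}$ from Zastavnyi (2006), whose Fourier transforms are known to satisfy two-sided bounds $\hat{\phi}\asymp(1+\|\blambda\|)^{-(d+\alpha)}$ under a list of parameter constraints; the proof then consists of checking that cascade of inequalities and setting $\alpha=\nu-d/2$ so that the exponent becomes $\nu+d/2$. Your approach instead multiplies the Bessel kernel $K_s$ (with $s=\nu+d/2$, so that $\cF(K_s)\propto(1+\|\blambda\|^2)^{-s/2}$) by an autocorrelation bump $\chi=\eta*\tilde{\eta}$, giving $g=K_s\chi\in L^2$ supported in $[-b,b]^d$ (local square integrability of $K_s$ uses exactly $s>d/2$, i.e.\ $\nu>0$) and $\gamma=\cF(K_s)*\cF(\chi)$. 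The choice $\cF(\chi)=|\cF(\eta)|^2\geq 0$ is the right device to prevent the lower bound from collapsing, and the high-frequency asymptotics you defer are indeed routine: Peetre's inequality $(1+\|\blambda-\bmu\|^2)^{-s/2}\leq 2^{s/2}(1+\|\bmu\|^2)^{s/2}(1+\|\blambda\|^2)^{-s/2}$ dominates the integrand and dominated convergence gives $\gamma(\blambda)(1+\|\blambda\|^2)^{s/2}\to c\,\chi(\mathbf{0})>0$, which together with continuity and strict positivity on compacts yields the global two-sided bound. What your route buys is a self-contained, essentially classical argument with no external parameter bookkeeping; what the paper's route buys is that all the Fourier analysis is outsourced to a citable reference, at the cost of verifying a long list of conditions. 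Both are complete proofs of the lemma.
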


\begin{proof}[{\bf Proof of Lemma \ref{lemma:exist:phi_zero:matern}}]
In \cite{zastavnyi2006some}, it is shown that there exists a family of functions from $\mathbb{R}^d$ to $\mathbb{R}$ of the form $\phi_{\delta,\mu,\bar{\nu},\alpha}$, with compact support, for $\delta,\mu,\bar{\nu} >0$ and $\alpha \in \IR$, such that
by combining Theorem 3, 6. (iii) and theorem 6, 1., (i) in \cite{zastavnyi2006some}, the Fourier transform $\hat{\phi}_{\delta,\mu,\bar{\nu},\alpha}$ of $\phi_{\delta,\mu,\bar{\nu},\alpha}$ satisfies, for two positive finite constants $\tilde{c},\tilde{c}'$, for all $\blambda \in \IR^d$,
\begin{equation} \label{eq:rate:FT:compact}
\tilde{c} \hat{\phi}_{\delta,\mu,\bar{\nu},\alpha}(\blambda)
\leq (1 + \|\blambda\|)^{- (d-1+\alpha+1)}
\leq 
\tilde{c}' \hat{\phi}_{\delta,\mu,\bar{\nu},\alpha}(\blambda)
\end{equation}
provided
\begin{align*}
& \delta >0,
\quad \quad
\mu >0,
\quad \quad
\bar{\nu} >0,
\quad \quad
\alpha + d >0,
\quad \quad
d-1+\alpha+1 >0,
 \quad \quad
\delta>0,
\quad \quad
\mu>0,
\\
& \frac{d-1}{2} + \bar{\nu} + \frac{1}{2} >0,
\quad \quad
d-1+\alpha < 2 \left(  \frac{d-1}{2} + \bar{\nu}  \right),
\quad \quad
\mu + \frac{d-1}{2} + \bar{\nu} 
> d-1 + \alpha +1,
\\
&
\delta=2,
\quad \quad
\mu >\frac{1}{2},
\quad \quad
\frac{d-1}{2} + \bar{\nu} > - \frac{1}{2},
\quad \quad
d - 1 + \alpha = \frac{d-1}{2} + \bar{\nu} - \frac{1}{2}.
\end{align*}
These conditions hold if
\begin{align*}
& \bar{\nu} >0,
\quad \quad
d+\alpha >0,
\quad \quad
\delta=2,
\quad \quad
\mu> \frac{1}{2},
\quad \quad
d + \bar{\nu} >0,
\quad \quad
\alpha < 2 \bar{\nu},
\\
& \mu  -\frac{d}{2}  - \frac{1}{2} + \bar{\nu}
> \alpha,
\quad \quad
\bar{\nu} = \frac{d}{2} + \alpha.
\end{align*}
These conditions hold if
\begin{align*}
&
\frac{d}{2} + \alpha >0,
\quad \quad
d + \alpha >0,
\quad \quad
\delta=2,
\quad \quad
\mu> \frac{1}{2},
\quad \quad
\frac{3d}{2} + \alpha > 0 ,
\quad \quad
\alpha < d + 2 \alpha,
\\
&
\mu > \frac{1}{2},
\quad \quad
\bar{\nu} = \frac{d}{2} + \alpha.
\end{align*}
These conditions hold if
\begin{align*}
& \alpha > - \frac{d}{2},
\quad \quad
\alpha > - d,
\quad \quad
\delta=2,
\quad \quad
\mu> \frac{1}{2},
\quad \quad
\alpha > - \frac{3d}{2},
\quad \quad
\alpha > -d,
\quad \quad
\bar{\nu} = \frac{d}{2} + \alpha.
\end{align*}
Hence, we can select $\alpha = \nu - d/2$, in which case in \eqref{eq:rate:FT:compact}, we have $d-1+\alpha+1 = d/2 + \nu$ and thus we can let $\gamma = \hat{\phi}_{\delta,\mu,\bar{\nu},\alpha}$ to conclude the proof.
\end{proof}

\begin{proof}[{\bf Proof of Theorem \ref{theorem:matern:vs:matern}}]
From Lemmas \ref{lemma:matern:assympt:invertible} and \ref{lemma:exist:phi_zero:matern}, Condition \ref{cond:phi:zero} holds.
Let $\gamma^2$ be as in Lemma~\ref{lemma:exist:phi_zero:matern}, satisfying Condition \ref{cond:phi:zero}.
We have, for $i,j=1,2$, with a constant $0 < c < \infty$
\begin{align*}
\int_{\IR^d} &
\frac{1}{\gamma(\blambda)^4}
\left(
\rho_{ij}^{(0)}\sigma_{ii}^{(0)} \sigma_{jj}^{(0)} \widehat{{\cal M}}_{\nu,\alpha_{ij}^{(0)}}(\|\blambda\|)
-
\rho_{ij}^{(1)}\sigma_{ii}^{(1)} \sigma_{jj}^{(1)} \widehat{{\cal M}}_{\nu,\alpha_{ij}^{(1)}}(\|\blambda\|)
\right)^2
d \blambda
\\
&
\leq 
c
\int_{\IR^d}
(1+\| \blambda \|)^{4 \nu +2d}
\left\{
\frac{\sigma_{ii}^{(0)}
\sigma_{jj}^{(0)}
\rho_{ij}^{(0)}
}
{
(\alpha_{ij}^{(0)})^{2 \nu}
}
\frac{ \Gamma(\nu+d/2) }{ \Gamma(\nu) \pi^{d/2} }
\frac{1}{ ( (\alpha_{ij}^{(0)})^{-2} + \|\blambda\|^2  )^{\nu + d/2} }
\right.
\\
& \quad
\left.
-
\frac{\sigma_{ii}^{(1)}
\sigma_{jj}^{(1)}
\rho_{ij}^{(1)}
}
{
(\alpha_{ij}^{(1)})^{2 \nu}
}
\frac{ \Gamma(\nu+d/2) }{ \Gamma(\nu) \pi^{d/2} }
\frac{1}{ ( (\alpha_{ij}^{(1)})^{-2} + \|\blambda\|^2  )^{\nu + d/2} }
\right\}^2
d \blambda 
\\
& = 
c
\left(
\frac{\sigma_{ii}^{(0)}
\sigma_{jj}^{(0)}
\rho_{ij}^{(0)}
}
{
(\alpha_{ij}^{(0)})^{2 \nu}
}
\frac{ \Gamma(\nu+d/2) }{ \Gamma(\nu) \pi^{d/2} }
\right)^2
\int_{\IR^d}
(1+\| \blambda \|)^{4 \nu +2d}
\\
&  \quad
\left[
\frac{1}{ ( (\alpha_{ij}^{(0)})^{-2}  + \|\blambda\|^2 )^{\nu + d/2} }
-
\frac{1}{ ( (\alpha_{ij}^{(1)})^{-2}  + \|\blambda\|^2   )^{\nu + d/2} }
\right]^2
d\blambda
< \infty
\end{align*}
from Lemma \ref{lem:finite:integral}. This concludes the proof, from Theorem \ref{theorem:sufficient:integral:normalized:difference:small}.
\end{proof}

 \begin{proof}[{\bf Proof of Theorem \ref{theorem:wendland:vs:wendland}}]
From Lemmas \ref{lemma:wendland:assympt:invertible} and \ref{lemma:exist:phi_zero:matern}, Condition \ref{cond:phi:zero} holds.
Let $\gamma^2$ be as in Lemma~\ref{lemma:exist:phi_zero:matern}, satisfying Condition~\ref{cond:phi:zero}.
From Theorem~\ref{theorem:sufficient:integral:normalized:difference:small}, in order to prove Theorem~\ref{theorem:wendland:vs:wendland}, it is sufficient to show that, for $i,j=1,2$,
\[
\int_{\IR^d}
\frac{
1
}{
\gamma(\blambda)^4
}
\left(
\rho_{ij}^{(0)}\sigma_{ii}^{(0)} \sigma_{jj}^{(0)} \widehat{{\cal W}}_{\mu,\kappa,\beta_{ij}^{(0)}}(\|\blambda\|)
-
\rho_{ij}^{(1)}\sigma_{ii}^{(1)} \sigma_{jj}^{(1)} \widehat{{\cal W}}_{\mu,\kappa,\beta_{ij}^{(1)}}(\|\blambda\|)
\right)^2
d \blambda
< \infty.
\]
This is equivalent to
\[
\int_{\IR^d}
\frac{
\left(
\rho_{ij}^{(0)}\sigma_{ii}^{(0)} \sigma_{jj}^{(0)} \widehat{{\cal W}}_{\mu,\kappa,\beta_{ij}^{(0)}}(\|\blambda\|)
-
\rho_{ij}^{(1)}\sigma_{ii}^{(1)} \sigma_{jj}^{(1)} \widehat{{\cal W}}_{\mu,\kappa,\beta_{ij}^{(1)}}(\|\blambda\|)
\right)^2
}{
\left(
\rho_{ij}^{(0)}\sigma_{ii}^{(0)} \sigma_{jj}^{(0)} \widehat{{\cal W}}_{\mu,\kappa,\beta_{ij}^{(0)}}(\|\blambda\|)
\right)^2
}
d \blambda
< \infty
\]
which is proved in the proof of Theorem 4 in \cite{bevilacqua2019estimation}.
 \end{proof}
 
  \begin{proof}[{\bf Proof of Theorem \ref{theorem:matern:vs:wendland}}]
From Lemmas \ref{lemma:matern:assympt:invertible}, \ref{lemma:wendland:assympt:invertible} and \ref{lemma:exist:phi_zero:matern}, Condition \ref{cond:phi:zero} holds.
Let $\gamma^2$ be as in Lemma~\ref{lemma:exist:phi_zero:matern}, satisfying Condition~\ref{cond:phi:zero}.
From Theorem~\ref{theorem:sufficient:integral:normalized:difference:small}, in order to prove Theorem~\ref{theorem:matern:vs:wendland}, it is sufficient to show that, for $i,j=1,2$,
\[
\int_{\IR^d}
\frac{
1
}{
\gamma(\blambda)^4
}
\bigl(
\rho_{ij}^{(0)}\sigma_{ii}^{(0)} \sigma_{jj}^{(0)} \widehat{{\cal M}}_{\nu,\alpha_{ij}}(\|\blambda\|)
-
\rho_{ij}^{(1)}\sigma_{ii}^{(1)} \sigma_{jj}^{(1)} \widehat{{\cal W}}_{\mu,\kappa,\beta_{ij}}(\|\blambda\|)
\bigr)^2
d \blambda
< \infty.
\]
This is equivalent to
\[
\int_{\IR^d}
\frac{
\bigl(
\rho_{ij}^{(0)}\sigma_{ii}^{(0)} \sigma_{jj}^{(0)} \widehat{{\cal M}}_{\nu,\alpha_{ij}}(\|\blambda\|)
-
\rho_{ij}^{(1)}\sigma_{ii}^{(1)} \sigma_{jj}^{(1)} \widehat{{\cal W}}_{\mu,\kappa,\beta_{ij}}(\|\blambda\|)
\bigr)
}{
\bigl(
\rho_{ij}^{(0)}\sigma_{ii}^{(0)} \sigma_{jj}^{(0)} \widehat{{\cal M}}_{\nu,\alpha_{ij}}(\|\blambda\|)
\bigr)^2
}
d \blambda
< \infty
\]
which is proved in the proofs of Theorems 5 and 6 in \cite{bevilacqua2019estimation}. In order to apply these proofs, remark that one can show, using the basic properties of the Gamma function, that
$C_{\kappa,\mu}$ is equal to $C_{\nu,\kappa,\mu}$ in Theorem 5 of \cite{bevilacqua2019estimation} if $\kappa>0$ and to $R_{\mu}$ in Theorem 6 of \cite{bevilacqua2019estimation} if $\kappa=0$.
 \end{proof}


\begin{thebibliography}{}

\bibitem[Abramowitz and Stegun, 1970]{Abra:Steg:70}
Abramowitz, M. and Stegun, I.~A., editors (1970).
\newblock {\em Handbook of Mathematical Functions}.
\newblock Dover, New York.

\bibitem[Alegria and Porcu, 2017]{alegria2016dimple}
Alegria, A. and Porcu, E. (2017).
\newblock The dimple problem related to space–time modeling under the
  {L}agrangian framework.
\newblock {\em Journal of Multivariate Analysis}, 162:110--121.

\bibitem[Alegria et~al., 2019]{APFM}
Alegria, A., Porcu, E., Furrer, R., and Mateu, J. (2019).
\newblock Covariance functions for multivariate {G}aussian fields.
\newblock {\em Stochastic Environmental Research Risk Assessment},
  33:1593--1608.

\bibitem[Apanasovich and Genton, 2010]{Apanasovich:Genton:2010}
Apanasovich, T.~V. and Genton, M.~G. (2010).
\newblock Cross-covariance functions for multivariate random fields based on
  latent dimensions.
\newblock {\em Biometrika}, 97:15--30.

\bibitem[Apanasovich et~al., 2012]{doi:10.1080/01621459.2011.643197}
Apanasovich, T.~V., Genton, M.~G., and Sun, Y. (2012).
\newblock A valid {M}at\'ern class of cross-covariance functions for
  multivariate random fields with any number of components.
\newblock {\em Journal of the American Statistical Association},
  107(497):180--193.

\bibitem[Askey, 1973]{Askey:1973}
Askey, R. (1973).
\newblock Radial characteristic functions.
\newblock {\em Technical report, Research Center, University of Wisconsin}.

\bibitem[Bachoc, 2014]{bachoc2014asymptotic}
Bachoc, F. (2014).
\newblock Asymptotic analysis of the role of spatial sampling for covariance
  parameter estimation of gaussian processes.
\newblock {\em Journal of Multivariate Analysis}, 125:1--35.

\bibitem[Bevilacqua et~al., 2019]{bevilacqua2019estimation}
Bevilacqua, M., Faouzi, T., Furrer, R., and Porcu, E. (2019).
\newblock Estimation and prediction using generalized wendland covariance
  functions under fixed domain asymptotics.
\newblock {\em The Annals of Statistics}, 47(2):828--856.

\bibitem[Bevilacqua et~al., 2015]{bevilacqua2015}
Bevilacqua, M., Hering, A.~S., and Porcu, E. (2015).
\newblock On the flexibility of multivariate covariance models.
\newblock {\em Statistical Science}, 30(2):167--169.

\bibitem[Blackwell and Dubins, 1962]{blackwell1962merging}
Blackwell, D. and Dubins, L. (1962).
\newblock Merging of opinions with increasing information.
\newblock {\em The Annals of Mathematical Statistics}, 33(3):882--886.

\bibitem[Da~Prato and Zabczyk, 2014]{da2014stochastic}
Da~Prato, G. and Zabczyk, J. (2014).
\newblock {\em Stochastic Equations in Infinite Dimensions}.
\newblock Cambridge University Press.

\bibitem[Daley et~al., 2015]{daley2015}
Daley, D., Porcu, E., and Bevilacqua, M. (2015).
\newblock Classes of compactly supported covariance functions for multivariate
  random fields.
\newblock {\em Stochastic Environmental Research and Risk Assessment},
  29(4):1249--1263.

\bibitem[Furrer et~al., 2006]{Furrer:2006}
Furrer, R., Genton, M.~G., and Nychka, D. (2006).
\newblock Covariance tapering for interpolation of large spatial datasets.
\newblock {\em Journal of Computational and Graphical Statistics}, 15:502--523.

\bibitem[Genton and Kleiber, 2015]{Genton:Kleiber:2014}
Genton, M.~G. and Kleiber, W. (2015).
\newblock Cross-covariance functions for multivariate geostatistics.
\newblock {\em Statistical Science}, 30(2):147--163.

\bibitem[Gneiting, 2002a]{Gneiting:2002b}
Gneiting, T. (2002a).
\newblock Compactly supported correlation functions.
\newblock {\em Journal of Multivariate Analysis}, 83:493--508.

\bibitem[Gneiting, 2002b]{Gneiting:2002}
Gneiting, T. (2002b).
\newblock Stationary covariance functions for space-time data.
\newblock {\em Journal of the American Statistical Association}, 97:590--600.

\bibitem[Gneiting et~al., 2010]{Gneiting:Kleibler:Schlather:2010}
Gneiting, T., Kleiber, W., and Schlather, M. (2010).
\newblock Mat\'ern {Cross-Covariance} functions for multivariate random fields.
\newblock {\em Journal of the American Statistical Association},
  105:1167--1177.

\bibitem[Golubov, 1981]{Golubov:1981}
Golubov, B.~I. (1981).
\newblock On {Abel--Poisson} type and {Riesz} means.
\newblock {\em Analysis Mathematica}, 7:161--184.

\bibitem[Ibragimov and Rozanov, 1978]{Ibragimov-Rozanov:1978}
Ibragimov, I.~A. and Rozanov, Y.~A. (1978).
\newblock {\em Gaussian Random Processes}.
\newblock Springer, New York.

\bibitem[Kaufman et~al., 2008]{Kaufman:Schervish:Nychka:2008}
Kaufman, C.~G., Schervish, M.~J., and Nychka, D.~W. (2008).
\newblock Covariance tapering for likelihood-based estimation in large spatial
  data sets.
\newblock {\em Journal of the American Statistical Association},
  103:1545--1555.

\bibitem[Maniglia and Rhandi, 2004]{maniglia2004gaussian}
Maniglia, S. and Rhandi, A. (2004).
\newblock Gaussian measures on separable {H}ilbert spaces and applications.
\newblock {\em Quaderni di Matematica}, 2004(1).

\bibitem[Mardia and Marshall, 1984]{Mardia:Marshall:1984}
Mardia, K.~V. and Marshall, J. (1984).
\newblock Maximum likelihood estimation of models for residual covariance in
  spatial regression.
\newblock {\em Biometrika}, 71:135--146.

\bibitem[Porcu et~al., 2016]{PBG16}
Porcu, E., Bevilacqua, M., and Genton, M.~G. (2016).
\newblock Spatio-temporal covariance and cross-covariance functions of the
  great circle distance on a sphere.
\newblock {\em Journal of the American Statistical Association},
  111(514):888--898.

\bibitem[Porcu et~al., 2006]{Porcu:Gregori:Mateu:2006}
Porcu, E., Gregori, P., and Mateu, J. (2006).
\newblock Nonseparable stationary anisotropic space time covariance functions.
\newblock {\em Stochastic Environmental Research and Risk Assessment},
  21:113–122.

\bibitem[Porcu and Zastavnyi, 2011]{Porcu20111293}
Porcu, E. and Zastavnyi, V. (2011).
\newblock Characterization theorems for some classes of covariance functions
  associated to vector valued random fields.
\newblock {\em Journal of Multivariate Analysis}, 102(9):1293 -- 1301.

\bibitem[Schaback, 2011]{Schaback:2011}
Schaback, R. (2011).
\newblock The missing {Wendland} functions.
\newblock {\em Advances in Computational Mathematics}, 34(1):67--81.

\bibitem[Skorokhod and Yadrenko, 1973]{Sko:ya:1973}
Skorokhod, A. and Yadrenko, M. (1973).
\newblock On absolute continuity of measures corresponding to homogeneous
  gaussian fields.
\newblock {\em Theory of Probability and Its Applications}, 18:27--40.

\bibitem[Stein, 1988]{Stein:1988}
Stein, M. (1988).
\newblock Asymptotically efficient prediction of a random field with a
  misspecified covariance function.
\newblock {\em The Annals of Statistics}, 16:55--63.

\bibitem[Stein, 1990]{Stein:1990}
Stein, M.~L. (1990).
\newblock Uniform asymptotic optimality of linear predictions of a random field
  using an incorrect second order structure.
\newblock {\em The Annals of Statistics}, 19:850--872.

\bibitem[Stein, 1993]{Stein:1993}
Stein, M.~L. (1993).
\newblock A simple condition for asymptotic optimality of linear predictions of
  random fields.
\newblock {\em Statistic and Probability Letters}, 17:399--404.

\bibitem[Stein, 1999a]{Stein:1999}
Stein, M.~L. (1999a).
\newblock {\em Interpolation of Spatial Data. Some Theory for Kriging}.
\newblock Springer, New York.

\bibitem[Stein, 1999b]{Stein:1999b}
Stein, M.~L. (1999b).
\newblock Predicting random fields with increasing dense observations.
\newblock {\em The Annals of Applied Probability}, 9:242--273.

\bibitem[Stein, 2004]{Stein:2004}
Stein, M.~L. (2004).
\newblock Equivalence of gaussian measures for some nonstationary random
  fields.
\newblock {\em Journal of Statistical Planning and Inference}, 123:1--11.

\bibitem[Wackernagel, 2003]{Wackernagel:2003}
Wackernagel, H. (2003).
\newblock {\em Multivariate Geostatistics: An Introduction with Applications}.
\newblock Springer, New York, 3rd edition.

\bibitem[Wendland, 1995]{Wendland:1995}
Wendland, H. (1995).
\newblock Piecewise polynomial, positive definite and compactly supported
  radial functions of minimal degree.
\newblock {\em Advances in Computational Mathematics}, 4:389--396.

\bibitem[Yaglom, 1987]{Yaglom:1987}
Yaglom, A.~M. (1987).
\newblock {\em Correlation Theory of Stationary and Related Random Functions.
  Volume I: Basic Results}.
\newblock Springer, New York.

\bibitem[Zastavnyi, 2006]{zastavnyi2006some}
Zastavnyi, V.~P. (2006).
\newblock On some properties of {Buhmann} functions.
\newblock {\em Ukrainian Mathematical Journal}, 58(08):1045--1067.

\bibitem[Zhang, 2004]{Zhang:2004}
Zhang, H. (2004).
\newblock Inconsistent estimation and asymptotically equivalent interpolations
  in model-based geostatistics.
\newblock {\em Journal of the American Statistical Association}, 99:250--261.

\bibitem[Zhang and Cai, 2015]{zhang2015doesn}
Zhang, H. and Cai, W. (2015).
\newblock When doesn’t cokriging outperform kriging?
\newblock {\em Statistical Science}, 30(2):176--180.

\end{thebibliography}
\end{document}